\documentclass[10pt]{article}
\usepackage{geometry}
\usepackage{tabularx}
\usepackage{titlesec}
\usepackage{tcolorbox}
\usepackage{ulem}
\usepackage{graphicx} 
\usepackage{shapepar} 
\usepackage{url} 
\usepackage{color} 
\usepackage{float} 
\usepackage{amsmath, amsthm}
\usepackage{amssymb}
\usepackage{amsfonts}
\usepackage{mathrsfs}
\usepackage{hyperref}
\usepackage{caption} 
\usepackage{setspace}
\usepackage{diagbox}     
\usepackage{cite}

\newcommand{\thickhline}{%
    \noalign {\ifnum 0=`}\fi \hrule height 1pt
    \futurelet \reserved@a \@xhline
}

\newtheorem{theorem}{Theorem}[section]
\newtheorem{mydef}[theorem]{Definition}
\newtheorem{remark}[theorem]{Remark}
\newtheorem{lemma}[theorem]{Lemma}
\newtheorem{proposition}[theorem]{Proposition}
\newtheorem{cor}[theorem]{Corollary}

\numberwithin{equation}{section}

\makeatletter
\def\blfootnote{\xdef\@thefnmark{}\@footnotetext}
\makeatother

\def\dd{\,d}
\def\m{\mathbb}					
\def\lam{\lambda}  \def\eps{\epsilon}   \def\veps{\varepsilon}  
\def\a{\alpha}	\def\b{\beta}	\def\ls{\lesssim}
\def\p{\partial}  
\def\wh{\widehat}	\def\la{\langle}	\def\ra{\rangle}
\def\ls{\lesssim}	\def\gs{\gtrsim}	\def\wt{\widetilde}
\def\i{\int\limits}		
\def\be{\begin{equation}}     \def\ee{\end{equation}}
\def\bp{\begin{pmatrix}}	\def\ep{\end{pmatrix}} 
		\def\F{\mathscr{F}}

\title{Mean Effects on Critical Well-Posedness for  Majda–Biello Systems on the Torus}
\author{Ke Wang and Xin Yang}
\date{}
\begin{document}
\maketitle
\begin{abstract}
This paper studies how the mean of the initial data $u_0$ affects the critical indices concerning local well-posedness for the following Majda-Biello systems:
\[
	\left\{\begin{aligned}
		& u_t + u_{xxx} + vv_x = 0 , \\
		& v_t + \alpha v_{xxx} + (uv)_x = 0 , \\
		& (u,v) \mid_{t=0} = (u_0, v_0) \in H^s(\mathbb{T}) \times H^s(\mathbb{T}),
	\end{aligned}\right.
	\qquad x \in \mathbb{T}, \, t\in \mathbb{R},
\]
where $\mathbb{T}$ refers to the periodic torus and the dispersion coefficient $\alpha$ is restricted in $(0,4] \setminus \{1\}$ which corresponds to resonant cases.
Previously, under the zero-mean assumption on $u_0$, Oh (Int. Math. Res. Not., (18):3516-3556, 2009) determined the critical indices $s^{*}(\alpha)$ of the Sobolev regularity of the initial data for $C^3$ local well-posedness. In particular, Oh showed that 
\[
s^{*}(\alpha) = 
\left\{
\begin{array}{lll}
1, & \text{for $\alpha$ such that $\sqrt{12/\alpha - 3} \in \mathbb{Q}$ }, \\
\frac12, & \text{for a.e. $\alpha$ such that $\sqrt{12/\alpha - 3} \notin \mathbb{Q}$ }.
\end{array}\right.\]
In this paper, by allowing the mean of $u_0$ to be non-zero, we find that the critical index $s^{*}(\alpha)$ can be lowered from $1$ to $\frac12$ when $\sqrt{12/\alpha - 3} \in \mathbb{Q}$.
For other values of $\alpha$, except in a set of zero measure, we also justify the critical index $s^{*}(\alpha)$ to be $\frac12$ regardless of the mean of $u_0$. By subtracting the mean from $u_0$, the original Majda-Biello systems are slightly modified to contain first-order terms but with zero-mean initial data. The key ingredient in our proof is to introduce a refined Diophantine approximation theory to capture the essential resonance effect for the perturbed dispersive structure caused by these additional first-order terms. It turns out that only when $\sqrt{12/\alpha - 3} \in \mathbb{Q}$, the lower bound of the resonance function can be improved so that the critical index of local well-posedness can be smaller.
\end{abstract}

\blfootnote{\hspace{-0.25in} 2010 Mathematics Subject Classification. 35Q53; 35G55; 35L56}

\blfootnote{\hspace{-0.25in} Key words and phrases. KdV-KdV systems;   Majda-Biello systems; Local well-posedness; Fourier restriction spaces; Diophantine Approximation Theory.}

\begin{center}
\tableofcontents
\end{center}

\section{Introduction}
The initial value problem of Majda-Biello systems in the periodic domain reads as 
\begin{equation}\label{GeneralMajdaBiello}
	\left\{\begin{aligned}
		&u_t+u_{xxx}+vv_x= 0 , \\
		&v_t+ \alpha v_{xxx}+(uv)_x=0 , \\
		&(u,v)\mid_{t=0} =(u_0,v_0)\in H^s(\mathbb{T})\times H^s(\mathbb{T}),
	\end{aligned}\right.
\qquad x\in \mathbb{T}, \, t\in \mathbb{R},
\end{equation}
where $\mathbb{T} = \mathbb{R}/{(2\pi\mathbb{Z})}$ denotes the periodic torus with length $2\pi$, and the dispersion coefficient $\alpha\in \mathbb{R}\setminus\{0\}$, and $H^s$ refers to Sobolev spaces. This system was proposed by Majda and Biello in \cite{MajdaBiello} as a reduced asymptotic model describing the nonlinear resonant interactions between
planetary waves in rotating fluids such as the Earth's oceans and atmosphere. 

Majda-Biello systems are special coupled KdV systems. We first recall the initial value problem of the KdV equation:
\begin{equation}\label{KdV}
	\left\{
	\begin{aligned}
		u_t+u_{xxx}+uu_x = 0,\\
		u\mid_{t = 0} = u_0\in H^s(\mathbb{G}),
	\end{aligned}
	\right.
\end{equation}
where $\mathbb{G} = \mathbb{R}$ or $\mathbb{T}$. The problem of well-posedness problem (low regularity) asks for the smallest $s$ such that (\ref{KdV}) is well-posed. The study of this problem has produced satisfactory results after more than 50 years of development; see, e.g., \cite{BS75,BS76,Kat79,Kat83,KPV91,Bourgain93, KPV96SahrpIndex, Iterm03, ChristCollianderTao2003, KappelerTopalov06, GuoZiHua09, Kishimoto09, Molinet2011, Molinet2012, KillipRowanVicsan19}. In particular, we list some influential works that dealt with the cases when $s\leq 0$. Firstly, by introducing Fourier restriction spaces, Bourgain established the global well-posedness of (\ref{KdV}) in both $H^{s}(\mathbb{R})$ and $H^{s}(\mathbb{T})$ for $s\geq 0$. Then Kenig, Ponce and Vega \cite{KPV96SahrpIndex} further refined the result to justify the local well-posedness in $H^{-\frac{3}{4}+}(\mathbb{R})$ and $H^{-\frac{1}{2}}(\mathbb{T})$. The corresponding global well-posedness was justified by Colliander, Keel, Staffilani, Takaoka and Tao \cite{Iterm03} using the I-method. The endpoint case $s = -\frac{3}{4}$ in $H^{s}(\mathbb{R})$ was proved by Guo \cite{GuoZiHua09} and Kishimoto \cite{Kishimoto09}. Finally, utilizing the complete integrability structure of the KdV equation (\ref{KdV}), the global well-posedness of (\ref{KdV}) was established in $H^{s}(\mathbb{T})$ for $s\geq -1$ by Kappeler and Topalov \cite{KappelerTopalov06}, and in $H^{s}(\mathbb{R})$ for $s\geq -1$ by Killip and Visan \cite{KillipRowanVicsan19}. The index $-1$ for both the $\m{R}$ case and the $\m{T}$ case are justified to be sharp by Molinet \cite{Molinet2011, Molinet2012}. Furthermore, if the well-posedness is strengthened to be analytical well-posedness, which means the solution map is analytic rather than simply continuous, then the sharp index is found to be $-\frac34$ in the $\m{R}$ case and $-\frac12$ in the $\m{T}$ case, see \cite{ChristCollianderTao2003} by Christ, Colliander and Tao.

In applications, two or more KdV equations may be coupled to model complicated phenomena. The most widely used models include the Majda-Biello systems \cite{HirotaSatsuma1981}, the Hirota-Satsuma systems \cite{MajdaBiello}, and the Gear-Grimshaw systems \cite{GearGrimshaw}. Thanks to the development of the well-posedness theory for the single-KdV equation, Oh \cite{Oh2009} initiated the breakthrough on the coupled KdV systems when studying the Majda-Biello systems 
\be\label{MB}
	\left\{\begin{aligned}
		&u_t + u_{xxx} + vv_x= 0 , \\
		&v_t + \alpha v_{xxx} + (uv)_x=0 , \\
		&(u,v)\mid_{t=0} =(u_0,v_0)\in H^s(\mathbb{G})\times H^s(\mathbb{G}),
	\end{aligned}\right.
\qquad x\in \mathbb{G}, \, t\in \mathbb{R}.
\ee
Since Majda-Biello systems are not completely integrable anymore, then the most effective method is relying on a contraction mapping argument in Fourier restriction spaces, which leads to analytical well-posedness automatically. As a result, we will focus on the analytical well-posedness in the following and denote by $s^{*}_{\mathbb{G}}(\a)$ the smallest value such that (\ref{MB}) is locally analytically well-posed in $H^s(\mathbb{G})\times H^s(\mathbb{G})$ for any $s > s^{*}_{\mathbb{G}}(\a)$. 

When $\a = 1$, the main terms for both equations in (\ref{MB}) agree with that in the single KdV equation, so the well-posedness problem can be handled similarly. When $\a < 0$ and $\a > 4$, there is no essential resonance effect of the system (\ref{MB}), it turns out that $s^{*}_{\mathbb{G}}(\a)$ also matches the index in the single KdV case. Therefore, the challenging cases are $\a\in(0,4]\setminus\{1\}$. When $\m{G} = \m{R}$, Oh \cite{Oh2009} showed that $s^{*}_{\m{R}}(\a) = 0$ for any $\a\in(0,4)\setminus\{1\}$. Later, Yang and Zhang \cite{YangZhang} studied the end-point case $\a=4$ and found $s^{*}_{\m{R}}(4)=\frac34$. These critical indices are higher than that in the single-KdV case, thus revealing the essential difference between the single-KdV equation and coupled KdV systems (\ref{MB}) in the resonant cases $\a \in (0,4]\setminus\{1\}$. When $\m{G} = \m{T}$, by assuming $u_0$ has zero mean and by incorporating Diophantine approximation theory, Oh showed that $s^{*}_{\m{T}}(\a)\in [\frac12, 1]$ and $s^{*}_{\m{T}}(\a) = \frac12$ for almost every $\a \in (0, 4]\setminus\{1\}$. The assumption that $u_0$ has zero mean is necessary to establish the bilinear estimates \cite{KPV96SahrpIndex, Oh2009}. In the single KdV case, by subtracting the mean from its initial data, the new initial data automatically satisfies this assumption and the new equation adds a first-order term which does not affect the resonance structure of the equation. However, due to the interaction between $u$ and $v$ in coupled KdV systems, the appearance of first-order terms may bring additional complexity to the systems. Thus, the goal of this paper is to investigate whether and how the mean of $u_0$ affects the critical index $s^{*}(\a)$ for (\ref{MB}) with $\m{G} = \m{T}$ and $\a\in (0, 4]\setminus\{1\}$. 

For the initial data $u_0\in H^{s}(\m{T})$, its Fourier transform is defined as 
\[\widehat{u}_{0}(\xi) = \int_{0}^{2\pi} e^{-i x \xi} u_0(x) \dd x.\]
So the mean of $u_0$ can be represented as $\frac{1}{2\pi} \widehat{u}_{0}(0)$. 
For the case where $u_0$ has non-zero mean, i.e. $\widehat{u}_0(0)\neq 0$, we apply the following transformation: 
\begin{equation*}
\widetilde{u}(x,t) = u(x,t)-\frac{1}{2\pi}\widehat u_0(0),\quad \widetilde{v}(x,t) = v(x,t).
\end{equation*}
For convenience, we denote $\b = \frac{1}{2\pi}\widehat u_0(0)$. Then the target problem (\ref{GeneralMajdaBiello}) is converted to the following.
\begin{equation}\label{mmMajdaBirllo}
\left\{\begin{aligned}
&\widetilde u_t+\widetilde u_{xxx}+\widetilde{v}\widetilde v_x= 0 , \\
&\widetilde v_t+ \alpha \widetilde v_{xxx} + \b \widetilde v_x+(\widetilde u\widetilde v)_x=0 , \\
&(\widetilde u,\widetilde v)\mid_{t=0} =(u_0 - \b,v_0)\in H^s(\mathbb{T})\times H^s(\mathbb{T}),
\end{aligned}\right.
\qquad x\in \mathbb{T},t\in \mathbb{R}.
\end{equation}
The good side of (\ref{mmMajdaBirllo}) is that the mean of its initial data $\wt{u}(\cdot, 0)$ is automatically zero, and this zero-mean property is persistent with respect to time. The bad side of (\ref{mmMajdaBirllo}) is the extra first-order term $\b \wt{v}_{x}$. In the single KdV case on $\m{T}$, this extra term  does not cause any trouble as explained in \cite{Bourgain93}. But it does have essential impact on the critical index $s^{*}$ for coupled KdV systems for special coefficients, see \cite{YangLiZhang}. 

Based on the form of (\ref{mmMajdaBirllo}), we consider the following initial value problem with an extra first-order term $\b v_{x}$ and mean-zero initial data $u_0$.
\begin{equation}\label{mMajdaBiello}
	\left\{\begin{aligned}
		&u_t+u_{xxx}+vv_x= 0 , \\
		&v_t+ \alpha v_{xxx}+\beta v_x+(uv)_x=0 , \\
		&(u,v)\mid_{t=0} =(u_0,v_0)\in H^{s}_{0}(\mathbb{T})\times H^{s}(\mathbb{T}),
	\end{aligned}\right.
	\quad\quad x\in \mathbb{T}, \, t\in \mathbb{R},
\end{equation}
where 
\[
	H_{0}^{s}(\m{T}) := \bigg\{ f\in H^{s}(\m{T}): \int_{0}^{2\pi}f(x)\,\dd x=0\bigg\}.
\]
We denote by $s^{*}(\a, \b)$ the smallest value such that (\ref{mMajdaBiello}) is locally analytically well-posed in $H^{s}_{0}(\mathbb{T})\times H^{s}(\mathbb{T})$ for any $s > s^{*}(\a, \b)$. We first study the case $\a=4$ which corresponds to the most resonant effect, and we find that $s^{*}(4,\b)$ is lowered than $s^{*}(4,0)$ for most values of $\b$. For convenience of notation, we abbreviate the phrase “locally well-posed” to be “LWP”.

\begin{theorem}\label{thm1}
Let $\a = 4$ and assume $u_0$ has zero mean in problem \eqref{mMajdaBiello}. Then (\ref{mMajdaBiello}) is analytically LWP in $H^{s}_{0}(\mathbb{T})\times H^{s}(\mathbb{T})$ for any
\be\label{ci-4}
	\left\{\begin{array}{ll}
		s\geq 1, & \quad \text{for } \beta = 3n^2, n\in \mathbb{N}, \\
		s\geq 1/2, &\quad \text{for } \beta \neq 3n^2, n\in \mathbb{N}.
	\end{array}\right.
\ee
\end{theorem}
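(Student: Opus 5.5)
We will prove Theorem \ref{thm1} by a contraction argument in periodic Fourier restriction spaces. These are adapted to the two linear symbols
\[
\phi_1(k)=k^3,\qquad \phi_2(k)=4k^3-\beta k ,
\]
where $\phi_1$ is the phase for $u$ and $\phi_2$ is the phase for $v$ once $\alpha=4$. Since the torus forces the endpoint $b=\frac12$, we will work in the spaces $Y^{s}_{j}=X^{s,\frac12}_{j}\cap \ell^2_kL^1_\tau$ with the companion spaces $Z^{s}_j$ for the Duhamel term, as in Bourgain and Kenig--Ponce--Vega, and as used by Oh. The linear estimates and the time-localization estimates are standard and do not depend on the phase, so they can be quoted. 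The whole theorem then reduces to two bilinear estimates:
\[
\|\partial_x(v_1v_2)\|_{Z^{s}_1}\lesssim \|v_1\|_{Y^s_2}\|v_2\|_{Y^s_2},\qquad
\|\partial_x(uv)\|_{Z^{s}_2}\lesssim \|u\|_{Y^s_1}\|v\|_{Y^s_2},
\]
where in the second estimate $u$ has zero mean.

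The first key step is to compute the resonance functions explicitly. For the $vv_x$ term, take $k=k_1+k_2$ with $k_1,k_2$ the $v$-frequencies. A direct expansion gives
\[
H_1=\phi_1(k)-\phi_2(k_1)-\phi_2(k_2)=-k\bigl(3(k_1-k_2)^2-\beta\bigr).
\]
For the $(uv)_x$ term, take $k=k_1+k_2$ with $k_1\neq0$ the $u$-frequency and $k_2$ the $v$-frequency. Using $4(k^2+kk_2+k_2^2)-k_1^2=3(k+k_2)^2$, we get
\[
H_2=\phi_2(k)-\phi_1(k_1)-\phi_2(k_2)=k_1\bigl(3(k+k_2)^2-\beta\bigr).
\]
Now suppose $\beta\neq 3n^2$ for every $n\in\mathbb{N}$. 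Then $c_\beta:=\inf_{m\in\mathbb{Z}}|3m^2-\beta|>0$, because $3m^2-\beta\to\infty$ and none of these finitely many relevant values vanish; the case $m=0$ is harmless since $\beta\neq0$ there. This yields
\[
|H_1|\gtrsim_\beta |k|\langle k_1-k_2\rangle^2,\qquad |H_2|\gtrsim_\beta |k_1|\langle k+k_2\rangle^2 .
\]
These bounds are only needed when $k\neq0$ in $H_1$; the mode $k=0$ is killed by the derivative. In $H_2$ we always have $k_1\neq0$ by the mean-zero assumption. In particular $\max(|\sigma|,|\sigma_1|,|\sigma_2|)\gtrsim|H|$, and $|H|$ is at least of order the smallest frequency magnitude times a square of a frequency combination.

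With these bounds we run the usual case analysis in $s\ge\frac12$. Call $(k_1,k_2)$ near-resonant when $k_1-k_2$ (respectively $k+k_2$) is bounded.

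When $k_1-k_2$ (respectively $k+k_2$) is large, $|H|\gtrsim |k_{\max}|^2|k_{\min}|$, as in single KdV. Then the derivative is absorbed by $\langle\sigma_{\max}\rangle^{1/2}$, and the periodic $L^4$ Strichartz estimate $X^{0,1/3}\subset L^4$ finishes the argument, with room to spare for $s\ge\frac12$.

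In the near-resonant region the frequencies are comparable, say $|k_1|\sim|k_2|\sim|k|$ up to constants, but we only have $|H|\gtrsim|k|$. Here we gain $\langle\sigma_{\max}\rangle^{1/2}\gtrsim|k|^{1/2}$ against the derivative $|k|$. The remaining factor $|k|^{1/2}$ must be paid by the weights, since $\langle k\rangle^{s}/(\langle k_1\rangle^s\langle k_2\rangle^s)\sim |k|^{-s}$. This closes precisely when $s\ge\frac12$. Because the difference $k_1-k_2$ (respectively $k+k_2$) ranges over a bounded set, the sum over $k_1$ for fixed $k$ has $O(1)$ terms, so Cauchy--Schwarz in $k$ together with $L^\infty_tL^2_x$ control suffices. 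The $\ell^2_kL^1_\tau$ part of $Z^s$ is handled by the same case split with the usual $\langle\sigma\rangle^{-1/2-}$ bookkeeping. I expect this step to be the main obstacle: carrying out the endpoint $b=\frac12$ bookkeeping uniformly in the sub-cases, including the case where the maximal modulation falls on an input rather than the output.

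For $\beta=3n^2$, exact resonances occur along $k_1-k_2=\pm n$ in $H_1$ and $k+k_2=\pm n$ in $H_2$, with no modulation gain at all. On these lines the frequencies satisfy $|k_1|\sim|k_2|\sim|k|/2$, and for fixed $k$ only $O(1)$ pairs occur. The required bound is then $\langle k\rangle^{1+s}\lesssim\langle k\rangle^{2s}$, which holds if $s\ge1$. Estimating the resonant piece directly in $L^\infty_tL^2$, and treating the off-resonant piece exactly as above, gives local well-posedness for $s\ge1$. This mirrors Oh's treatment of the rational case. Finally, a contraction in a small time interval yields analytic local well-posedness in both ranges of \eqref{ci-4}.
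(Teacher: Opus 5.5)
Your core mechanism is the paper's. Your $H_1=-k\bigl(3(k_1-k_2)^2-\beta\bigr)$ is its $-3\xi\bigl[(2\xi_1-\xi)^2-\beta/3\bigr]$. The non-vanishing of $3m^2-\beta$ for $\beta\neq 3n^2$ gives $|H|\gtrsim|k|$ on the comparable-frequency region, which pays exactly for the derivative at $s=\frac12$. For $\beta=3n^2$, the bound $|k|^{1-s}\lesssim 1$ is what the paper uses.

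However, your case split is wrong as written. You call a configuration near-resonant only when $k_1-k_2$ is bounded, and you assert $|H|\gtrsim|k_{\max}|^2|k_{\min}|$ as soon as $k_1-k_2$ is large. This is false in the intermediate range: for $k_1=N+j$, $k_2=N-j$ with $1\ll j\ll N$, one has $|H_1|\sim Nj^2\ll N^3$ (similarly for $H_2$ with $k+k_2=j$). In that range your $O(1)$-term count is unavailable as well. You can repair this with the bound $|H_1|\gtrsim_\beta |k|\langle k_1-k_2\rangle^2$ that you already wrote down: on the comparable-frequency region the weight becomes $|k|^{1/2-s}\langle k_1-k_2\rangle^{-1}\le 1$. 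Alternatively, as the paper does, split at $|2\xi_1-\xi|\sim|\xi|/10$ and run the $X^{0,1/3}\subset L^4$ argument on the whole comparable-frequency region, using only $|H|\gtrsim|\xi|$. The paper never needs your bounded-difference $L^\infty_tL^2_x$ argument.

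Two further steps are left open.

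First, the $\ell^2_kL^1_\tau$ part of $Z^s$. The delicate configuration is not maximal modulation on an input. The paper handles that case by Cauchy--Schwarz with $\langle L\rangle^{-\frac12-\epsilon}$ and by carrying an extra $\langle L\rangle^{\epsilon}$ through the $X^{s,-\frac12}$ argument. The hard case is $\langle L\rangle$ dominating while $\langle L_1\rangle,\langle L_2\rangle\lesssim\langle L\rangle^{6\epsilon}$. Then $L$ lies in a thin neighbourhood of the values $-H^\xi(\xi_1)$, and one needs a measure bound such as \eqref{Count1}, proved by counting the $\xi_1$ with $|H^\xi(\xi_1)|\sim M$. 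Generic bookkeeping does not give this.

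Second, at $b=\frac12$ shrinking the time interval does not by itself yield a factor $T^\theta$: a time cutoff is not small on $X^{s,\frac12}$ or on $\ell^2_kL^1_\tau$. So for large data you must either extract a power of $T$ from slack in the estimates, or do what the paper does and rescale to $\mathbb{T}_\sigma$. After rescaling:
the data norm drops like $\sigma^{-3/2}$ by \eqref{r_small};
the nonresonance bound becomes $|H|\gtrsim|\xi|/\sigma^2$, because $\sqrt{\beta/3}\notin\mathbb{Z}$ still applies to the integers $\sigma\xi,\sigma\xi_1$;
the bilinear constant is therefore $C\sigma$;
and $\sigma r\to0$ closes the contraction.
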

In the above theorem, the “analytically LWP” means the solution map 
\[
	\Phi: (u_0, v_0) \in H^{s}_{0}(\mathbb{T})\times H^{s}(\mathbb{T}) \longmapsto (u,v)\in C\big( [0,T]; H^{s}_{0}(\mathbb{T})\times H^{s}(\mathbb{T}) \big) 
\]
is analytic, where the lifespan $T$ depends on the norm of $(u_0,v_0)$. With this requirement, the indices in Theorem \ref{thm1} are sharp. Actually, the solution map even fails to be $C^{k}$ for certain $k$ if $s$ is below the threshold in Theorem \ref{thm1}.
\begin{theorem}\label{thm3}
Under the assumption in Theorem \ref{thm1}, the system (\ref{mMajdaBiello}) fails to be $C^{k}$ LWP in $H^{s}_{0}(\mathbb{T})\times H^{s}(\mathbb{T})$ if 
\begin{equation}
	\begin{cases}
		k\geq 2 \text{ and } s<1, & \quad \text{for $\beta = 3n^2$, $n\in\mathbb{N}$},\\
		k\geq 3 \text{ and } s<1/2, & \quad \text{for $\beta \neq 3n^2$, $n\in\mathbb{N}$}.  
	\end{cases}
\end{equation} 
\end{theorem}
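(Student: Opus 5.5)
The plan is the standard Bourgain–Tzvetkov argument. If the solution map $\Phi$ of \eqref{mMajdaBiello} were $C^k$ at the origin, then for data $\delta(u_0,v_0)$ the $j$-th Fréchet derivative $\frac{d^j}{d\delta^j}\Phi(\delta(u_0,v_0))|_{\delta=0}$, for $j\le k$, would be a bounded $j$-linear map from $H^s_0\times H^s$ to $C([0,T];H^s_0\times H^s)$. This derivative equals $j!$ times the $j$-th Picard iterate $A_j(u_0,v_0)$. So it suffices to exhibit, for each $N\to\infty$, data with $\|(u_0,v_0)\|_{H^s}\sim1$ such that $\|A_2\|_{H^s}\to\infty$ (first case) or $\|A_3\|_{H^s}\to\infty$ (second case) at some fixed small $t>0$.

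First I write the linear propagators in Fourier variables. The phase of $u$ is $e^{ik^3t}$ and that of $v$ is $e^{i(4k^3-\beta k)t}$, with the sign convention fixed once. For the quadratic interaction $vv_x\to u$ with $k=k_1+k_2$, I compute the resonance function using $k_1^3+k_2^3=k^3-3kk_1k_2$:
\[
H_1=k^3-4k_1^3-4k_2^3+\beta k = k\big(\beta-3(k_1-k_2)^2\big).
\]
For the interaction $uv\to v$ I obtain an analogous expression, with $v$-frequencies carrying the $\beta$ shifts.

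\textbf{Case $\beta=3n^2$.} Choose $u_0=0$ and $v_0=N^{-s}\big(\cos(k_1x)+\cos(k_2x)\big)$ with $k_1=N+n$ and $k_2=N$. Then $H_1=0$ exactly for the output $k=2N+n$. The second iterate is
\[
A_2(t)=\int_0^t S(t-t')\,\partial_x(v^{\rm lin})^2\,dt'.
\]
At frequency $2N+n$ it is non-oscillatory, so its Fourier coefficient has size $\sim t\,N\cdot N^{-2s}$. Hence $\|A_2(t)\|_{H^s}\gtrsim tN^{1-s}$, which diverges for $s<1$. Other frequency pairs produce $k=0$ (excluded by the zero mean), $k=\pm(k_1-k_2)$ (bounded), or nonresonant outputs with $|H_1|\gtrsim N$ (contributing only $O(N^{-s})$). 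None of these can cancel the main term. This gives failure of $C^2$.

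\textbf{Case $\beta\ne 3n^2$.} Here $|H_1|\ge c_\beta|k|$ for every $k\neq0$, so $A_2$ gains a full derivative. I pass to the cubic iterate, using the chain $v\cdot v\to u\to u\cdot v\to v$.

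1. Take $v_0=N^{-s}\cos(Nx)$ and $u_0=0$.
2. The pair $(N,N)$ produces $u$ at frequency $2N$ with $H_1=2N\beta\neq0$. After the time integration this piece is $\sim N\cdot N^{-2s}\,\big(e^{iH_1t}-1\big)/H_1$.
3. This $u_{2N}$ then interacts with $v_{-N}$ and returns to $v$ at frequency $N$, with second resonance $H_2=\pm2\beta N$.
4. The key check is that the total phase $H_1+H_2$ vanishes. This is the analogue of the self-interaction resonance of cubic NLS. With the chosen sign convention I expect $H_2=-H_1$.
5. Granting step 4, the term coming from the "$e^{iH_1t}/H_1$" part of step 2 is non-oscillatory in time. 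It yields a Fourier coefficient of $A_3$ at frequency $N$ of size $\sim t\cdot N\cdot N\cdot N^{-3s}/(\beta N)=tN^{1-3s}$.
6. Therefore $\|A_3(t)\|_{H^s}\gtrsim tN^{1-2s}\to\infty$ for $s<1/2$, so the map is not $C^3$.

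The main obstacle is the bookkeeping in this cubic case. I must verify the exact cancellation $H_1+H_2=0$, including the signs of $\beta$. I must also show that no other cubic frequency combination produces a term of comparable size at frequency $N$ that could cancel the main one. The competing combinations are those routed through the other quadratic pairs, through $A_2$ at low frequency, or through the "$-1/H_1$" boundary term. Each of these either is oscillatory with a phase $\gtrsim N$, and so contributes $O(N^{-3s})$, or involves the zero mode, which is excluded.

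If the exact cancellation in step 4 fails for the single-mode data, I would fall back to data at two nearby frequencies $N$ and $N+m$. I would then choose $m$ so that the combined phase is $O(1)$, using only $\beta\neq3n^2$ to keep the intermediate denominator of size $\sim N$.
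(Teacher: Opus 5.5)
Your proposal is correct and follows the paper's proof essentially verbatim. In the case $\beta=3n^2$ you use $u_0=0$ with two $v$-modes differing by $n$, which are exactly resonant at the sum frequency in the second iterate. In the case $\beta\neq 3n^2$ you use the single mode $\cos(Nx)$ and the cubic chain $vv\to u\to uv\to v$ evaluated at frequency $N$.

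The cancellation you leave as an expectation in step 4 does hold exactly: in the paper's notation $G(-N,-N,2N)=2\beta N=-G(N,N,-2N)$. Moreover, $\phi\equiv 0$ forces $\psi_2\equiv 0$, and the derivative factor kills the zero mode of $\phi_2$. So at output frequency $N$ the only contributing configuration is $(\xi_1,\xi_2)=(2N,N)$, and neither your competing-term analysis nor the two-frequency fallback is needed.
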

Combining the results in Theorem \ref{thm1} and Theorem \ref{thm3}, we conclude that for the initial value problem (\ref{mMajdaBiello}) with $\a = 4$ in the space $H^{s}_{0}(\mathbb{T})\times H^{s}(\mathbb{T})$, the critical indices are 
\be\label{ci_4}
	s^{*}(4, \beta) :=\left\{\begin{array}{cl}
		1, & \quad \text{for } \beta = 3n^2, n\in \mathbb{N}, \\
		1/2, &\quad \text{for } \beta \neq 3n^2, n\in \mathbb{N}.
	\end{array}\right.
\ee
Now applying this result to (\ref{mmMajdaBirllo}) which is converted from (\ref{GeneralMajdaBiello}), we give a complete answer to how the mean of $u_0$ affects the critical index for the analytical well-posedness of (\ref{GeneralMajdaBiello}) when $\a = 4$. For any $\b\in\m{R}$, we introduce the following notation for the space of the initial data:
\be\label{space_id_nzm}
	H^s_{\b}(\mathbb{T})\times H^s(\mathbb{T}) = \Big\{ (u_0, v_0)\in H^s(\mathbb{T})\times H^s(\mathbb{T}): \frac{1}{2\pi}\wh{u}_0(0) = \b \Big\}.
\ee
\begin{cor}\label{Cor_mres}
	The critical index of the analytically LWP for the initial value problem \eqref{GeneralMajdaBiello} with $\alpha = 4$ in the space $H^s_{\b}(\mathbb{T})\times H^s(\mathbb{T})$ is 
	\begin{equation}
		s_{\b}^{*}(4) = \left\{\begin{array}{cl}
			1, \quad & \text{for } \b = 3 n^2, n\in \mathbb{N},\\
			1/2, \quad & \text{for } \b \neq 3 n^2, n\in \mathbb{N}.
		\end{array}\right.
	\end{equation}
\end{cor}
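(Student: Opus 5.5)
The plan is to deduce Corollary \ref{Cor_mres} from Theorems \ref{thm1} and \ref{thm3}, together with the change of variables that converts \eqref{GeneralMajdaBiello} into \eqref{mmMajdaBirllo}. Fix $\b\in\m{R}$ and define the affine map
\[
	\Psi_\b:(u_0,v_0)\in H^s_\b(\m{T})\times H^s(\m{T})\longmapsto (u_0-\b,v_0)\in H^s_0(\m{T})\times H^s(\m{T}).
\]
This map is a bijective isometry between the two affine spaces, and both $\Psi_\b$ and $\Psi_\b^{-1}$ are analytic, since each is a translation by a constant.

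The first step is to verify the correspondence of solutions on $C([0,T];H^s)$. If $(u,v)$ solves \eqref{GeneralMajdaBiello} with data in $H^s_\b\times H^s$, then integrating the $u$-equation over $\m{T}$ shows that the mean of $u$ is conserved and equals $\b$ for all $t$, because $vv_x=\frac12(v^2)_x$ has zero mean. Setting $\wt u=u-\b$ and $\wt v=v$, we have $(uv)_x=(\wt u\wt v)_x+\b\wt v_x$, so $(\wt u,\wt v)$ solves \eqref{mMajdaBiello} with data $\Psi_\b(u_0,v_0)\in H^s_0\times H^s$. Conversely, a solution of \eqref{mMajdaBiello} with mean-zero $u_0$ keeps $\wt u$ mean-zero, and adding $\b$ back gives a solution of \eqref{GeneralMajdaBiello}.

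This identifies the solution map of \eqref{GeneralMajdaBiello} restricted to $H^s_\b\times H^s$ with $\Psi_\b^{-1}\circ\Phi\circ\Psi_\b$, where $\Phi$ is the solution map of \eqref{mMajdaBiello} with $\a=4$ and this $\b$. The lifespans correspond because $\|\Psi_\b(u_0,v_0)\|$ is controlled by $\|(u_0,v_0)\|+|\b|$. Composition with analytic affine maps preserves analyticity and $C^k$ regularity in both directions, so the two problems share the same analytic LWP and the same $C^k$ LWP.

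The second step applies the earlier results. By Theorem \ref{thm1}, \eqref{GeneralMajdaBiello} is analytically LWP in $H^s_\b\times H^s$ for $s\ge1$ when $\b=3n^2$ with $n\in\m{N}$, and for $s\ge\frac12$ otherwise. By Theorem \ref{thm3}, the solution map is not even $C^3$ below these thresholds, and in particular it is not analytic. Together these give $s^*_\b(4)$ as stated.

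The main point needing care is that $H^s_\b\times H^s$ is an affine subspace rather than a linear space. Analyticity on it should therefore be interpreted via the parametrization by the linear space $H^s_0\times H^s$, that is, as analyticity of the map in the variable $(u_0-\b,v_0)$. With that convention the transfer is immediate. I would also check that uniqueness classes, such as the Bourgain-space class $X^{s,b}$ used in Theorem \ref{thm1}, are preserved under adding a constant to $u$. This holds after localizing in time, since a time-cutoff times a constant has finite $X^{s,b}$ norm on the zero Fourier mode.
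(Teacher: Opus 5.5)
Your proposal is correct and follows the same route as the paper: the shift $\wt u = u-\b$, $\wt v = v$ turns \eqref{GeneralMajdaBiello} on $H^s_\b(\mathbb{T})\times H^s(\mathbb{T})$ into \eqref{mMajdaBiello} on $H^s_0(\mathbb{T})\times H^s(\mathbb{T})$, and Theorems \ref{thm1} and \ref{thm3} then give the threshold. The paper states this in one line, and your checks that the mean is conserved and that analyticity and $C^k$ regularity transfer through the affine translation just make that remark explicit.
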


We remark that when $\a=4$ and $\b = 0$ in Corollary \ref{Cor_mres}, we recover the result in \cite{Oh2009} that the critical index is $1$. Compared with other cases in Corollary \ref{Cor_mres} with $\a=4$, the novelty here is that we lowered the critical index to  $\frac12$ when the initial data $u_0$ has non-zero mean $\b$ which does not take the form $3n^2$, $n\in\m{N}$.

Next, we turn to the case when $\a\in(0,4)\setminus\{1\}$. As discovered by Oh \cite{Oh2009} that the resonance effect in this case is closely related to the Diophantine approximation theory, the critical index of the well-posedness of (\ref{mMajdaBiello}) with $\b=0$ is determined by the minimal type index of 
the number 
\be\label{R_alpha}
	R_{\a} := \sqrt{12/\a - 3},
\ee
see Definition 1 and Definition 2 in \cite{Oh2009} for the meaning of the minimal type index. In this paper, the coefficient $\b$ is nonzero in (\ref{mMajdaBiello}), so we introduce biased minimal type indices to incorporate the effect of $\b$, see Definition \ref{Def, mbti} for more details. Concerning the well-posedness results for problem (\ref{mMajdaBiello}), the following index $s_{\a, \b}$ is essential:
\be\label{s_0}
	s_{\a,\b} := \max\{ \nu_{\lambda}(c_1), \, \nu_{\lambda}(c_2) \},
\ee
where 
\be\label{s_0_aux}
	c_1 = \frac12 + \frac{R_\a}{6}, \quad c_2 = \frac12 - \frac{R_\a}{6}, \quad \lambda = \frac{\b}{\a R_\a},
\ee
and $\nu_{\lambda}$ represents the minimal $\lambda$-biased type index as defined in Definition \ref{Def, mbti}. The explanation of why we introduce (\ref{s_0}) and (\ref{s_0_aux}) is provided after Corollary \ref{Cor_MBres_ci}.
Although there is no explicit formula for $s_{\a,\b}$ in terms of $\a$ and $\b$, we prove in Proposition \ref{munu} that $s_{\a,\b} = 0$ for almost every $\a\in(0,4)\setminus\{1\}$ and for all $\b\in\m{R}$.

\begin{theorem}\label{thm2} 
	Let $\alpha \in (0,4)\setminus\{1\}$, $\beta\neq 0$, and assume $u_0$ has zero mean in problem (\ref{mMajdaBiello}). Then the modified Majda-Biello system \eqref{mMajdaBiello} is analytically LWP in $H^{s}_{0}(\mathbb{T})\times H^{s}(\mathbb{T})$ for 
	\[\left\{
	\begin{array}{lcl}
		s \geq \frac12, &\text{if}& R_\a \in \m{Q}, \vspace{0.1in}  \\
		s\geq 1, &\text{if}& R_\a \notin \m{Q} \text{ and } s_{\a,\b} \geq 1,  \vspace{0.1in} \\
		s > \frac{1 + s_{\a,\b}}{2}, &\text{if}& R_\a \notin \m{Q} \text{ and } s_{\a,\b} < 1.
	\end{array}
	\right.\]
	Moreover, for almost every $\alpha \in (0,4)\setminus\{1\}$, \eqref{mMajdaBiello} is analytically LWP in $H^{s}_{0}(\mathbb{T})\times H^{s}(\mathbb{T})$ for $s > \frac12$.
\end{theorem}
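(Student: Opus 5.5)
The plan is the standard Bourgain-space contraction argument, with all of the difficulty pushed into one lower bound on the resonance function. Put $u$ in $X^{s,\frac12}$ adapted to $\tau-\xi^3$ and $v$ in $Y^{s,\frac12}$ adapted to $\tau-\alpha\xi^3+\beta\xi$. Both spaces get the usual $\ell^2_\xi L^1_\tau$ auxiliary norm so that they embed into $C_tH^s$. The linear estimates and the localization in time are routine. It then suffices to prove two bilinear estimates,
\[
\|\partial_x(v_1v_2)\|_{X^{s,-\frac12}}\lesssim \|v_1\|_{Y^{s,\frac12}}\|v_2\|_{Y^{s,\frac12}},\qquad \|\partial_x(uv)\|_{Y^{s,-\frac12}}\lesssim \|u\|_{X^{s,\frac12}}\|v\|_{Y^{s,\frac12}},
\]
together with their $\ell^2L^1$ companions. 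The mean-zero condition on $u$ removes the frequency $\xi_1=0$ of $u$.

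The core step is the resonance function. For the first estimate, take $\xi=\xi_1+\xi_2$ with $\xi$ an output frequency of $u$. Then
\[
H_1=\xi^3-\alpha\xi_1^3-\alpha\xi_2^3+\beta(\xi_1+\xi_2)=\xi\big[(1-\alpha)\xi^2+3\alpha\xi_1\xi_2+\beta\big].
\]
The second estimate gives an analogous expression. Completing the square, $H_1$ vanishes when $\xi_1=\xi(c_j+o(1))$, with $c_{1,2}=\frac12\pm\frac{R_\alpha}{6}$. The first-order term shifts the root by an amount of order $\lambda/\xi$, with $\lambda=\beta/(\alpha R_\alpha)$. After factoring,
\[
|H_1|\sim |\xi|\cdot|\xi|\cdot\big|\xi_1-\xi c_j-\tfrac{\lambda}{\xi}+O(\xi^{-3})\big|\cdot(\text{nonresonant factor}\sim|\xi|).
\]
So everything reduces to lower bounds of $|\xi_1-c_j\xi-\lambda/\xi|$ over integers, which is how the biased type index $\nu_\lambda(c_j)$ of Definition \ref{Def, mbti} enters. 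I expect this to give $|H|\gtrsim |\xi|^{2-s_{\alpha,\beta}-\epsilon}$ in the resonant region.

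\textbf{Case $R_\alpha\notin\mathbb{Q}$.} I would feed this bound into the Kenig--Ponce--Vega/Oh case analysis, with $\max(\langle\sigma\rangle,\langle\sigma_1\rangle,\langle\sigma_2\rangle)\gtrsim|H|$. In the worst interaction all three frequencies are comparable. Balancing $|\xi|^{1-2s}$ against $|H|^{1/2}\ge|\xi|^{1-\frac{s_{\alpha,\beta}}{2}}$ should produce the threshold $s>\frac{1+s_{\alpha,\beta}}2$ when $s_{\alpha,\beta}<1$, and $s\ge1$ otherwise. At $s\ge1$ the derivative is absorbed by the trivial estimate $|H|\ge0$, using only $L^4$ Strichartz-type bounds.

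\textbf{Case $R_\alpha=p/q\in\mathbb{Q}$.} Here $\xi c_j$ lies in $\frac1{6q}\mathbb{Z}$, so either $\xi_1-c_j\xi=0$ exactly or $|\xi_1-c_j\xi|\ge\frac1{6q}$. In the exact case the shift $\lambda/\xi$ with $\beta\ne0$ gives $|H_1|\gtrsim|\xi|^2|\lambda|/|\xi|\sim|\xi|$. I would redo this directly on the exact polynomial, $3\alpha\xi_1\xi_2+(1-\alpha)\xi^2\in\frac{1}{?}\mathbb{Z}$ plus $\beta$. This is the main obstacle: a lattice-valued expression plus $\beta$ could still vanish when $\beta$ lies in the corresponding rational lattice. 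I will try to show the nonzero values are bounded below by a constant, giving $|H|\gtrsim|\xi|$ uniformly except on finitely many frequencies, which are harmless. The remaining frequency sets are then handled by the counting plus $L^4$ argument Oh used to get $s\ge\frac12$.

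For the almost-every-$\alpha$ statement, invoke Proposition \ref{munu}: $s_{\alpha,\beta}=0$ for a.e. $\alpha$, which gives $s>\frac12$ from the third case. Rationals $R_\alpha$ form a null set already covered by the first case.
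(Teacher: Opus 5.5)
Your architecture matches the paper's. You use a contraction in Bourgain spaces and factor the resonance function around the shifted roots $x_1=c_1\xi+\lambda/\xi+O(\xi^{-3})$, $x_2=c_2\xi-\lambda/\xi+O(\xi^{-3})$, then reduce to the biased index. For $R_\alpha\in\mathbb{Q}$ you use the dichotomy $\xi_1=c_j\xi$ versus $|\xi_1-c_j\xi|\geq\frac{1}{6q}$, which is exactly the paper's Case 3.1.

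\textbf{The error: the near-resonant bound for $R_\alpha\notin\mathbb{Q}$.} This is the one step that fixes the threshold. Since $H_1=-3\alpha\xi(\xi_1-x_1)(\xi_1-x_2)$, near $x_1$ there are only two large factors, $|\xi|$ and $|\xi_1-x_2|\sim|\xi|$. So $|H_1|\sim|\xi|^2|\xi_1-x_1|$, whereas your display counts $|\xi|$ three times. Using $|\xi_1-x_1|\approx|\xi|\,|c_1-\xi_1/\xi+\lambda/\xi^2|\gtrsim|\xi|^{-1-s_{\alpha,\beta}-\epsilon}$, the correct bound is $|H_1|\gtrsim|\xi|^{1-s_{\alpha,\beta}-\epsilon}$, which is the paper's \eqref{32a}. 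Your claimed $|H_1|\gtrsim|\xi|^{2-s_{\alpha,\beta}-\epsilon}$ is false. The paper's \eqref{G_ubd1} produces infinitely many $N$ with $|H_1|=|G(N_1,N_2,-N)|\lesssim N^{1-\nu_\lambda(c_1)+\epsilon}$.

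The weight is also miscounted. For comparable frequencies, $|\xi|\langle\xi\rangle^s\langle\xi_1\rangle^{-s}\langle\xi_2\rangle^{-s}\sim|\xi|^{1-s}$, not $|\xi|^{1-2s}$. With your two exponents the balancing actually gives $s\geq s_{\alpha,\beta}/4$, so the threshold you state does not follow from what you wrote. It does follow from the corrected pair: $|\xi|^{1-s}\lesssim|\xi|^{(1-s_{\alpha,\beta}-\epsilon)/2}$ holds if and only if $s\geq\frac{1+s_{\alpha,\beta}+\epsilon}{2}$. This is the paper's choice $\epsilon=2s-1-s_{\alpha,\beta}$.

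\textbf{The rational case is already complete.} The ``main obstacle'' you leave open is not one, because your own dichotomy closes it. Put $a=\xi_1-c_1\xi\in\frac{1}{6q}\mathbb{Z}$ and $b=\xi_1-c_2\xi=a+\frac{R_\alpha}{3}\xi$, so that $H_1=\xi(\beta-3\alpha ab)$.
\begin{itemize}
\item If $a=0$, then $H_1=\beta\xi$ exactly.
\item If $0<|a|\leq\frac{R_\alpha}{6}|\xi|$, then $|a|\geq\frac{1}{6q}$ and $|b|\geq\frac{R_\alpha}{6}|\xi|$. Hence $|H_1|\geq|\xi|\big(\frac{\alpha R_\alpha}{12q}|\xi|-|\beta|\big)\gtrsim|\xi|^2$ once $|\xi|\gtrsim_{\alpha,\beta}1$. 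The region near $c_2\xi$ is symmetric.
\end{itemize}
So the exact zeros of $H_1$ that do occur for special $\beta$ all lie in a bounded frequency set, where the derivative costs only a constant. Nothing further needs to be shown.

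\textbf{Large data and scaling.} Large data is not a matter of time localization. On $\mathbb{T}$ with $b=\frac12$ there is no $T^\theta$ gain, which is why the paper rescales to $\mathbb{T}_\sigma$. This replaces $\beta$ by $\beta/\sigma^2$ and $\mathbb{Z}$ by $\mathbb{Z}/\sigma$, so your bounds must be redone with $\sigma$ tracked. For example, the exact branch gives $|H_1|=|\beta\xi|/\sigma^2$. The bilinear constant must be $o(\sigma^{3/2})$, because the rescaled data are $O(\sigma^{-3/2})$ in $H^s$.

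\textbf{Almost every $\alpha$.} Proposition~\ref{munu} is stated for a fixed bias, whereas $\lambda=\beta/(\alpha R_\alpha)$ moves with $\alpha$. You need to rerun its Borel--Cantelli covering along the curve $\alpha\mapsto(c_1(\alpha),\lambda(\alpha))$ on compact subintervals. This works because $\partial_\alpha\big(c_1(\alpha)+\lambda(\alpha)/n^2\big)=c_1'(\alpha)+O(n^{-2})$ and $c_1'(\alpha)=-\frac{1}{\alpha^2R_\alpha}\neq0$. The paper passes over this point too.
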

The indices obtained above are also sharp if the solution map is required to be $C^3$, see the following theorem for precise statement.
\begin{theorem}\label{thm4}
	Under the assumptions in Theorem \ref{thm2}, the modified Majda-Biello system \eqref{mMajdaBiello} fails to be $C^3$ LWP in $H^{s}_{0}(\mathbb{T})\times H^{s}(\mathbb{T})$ for 
	\[\left\{
	\begin{array}{lcl}
		s < \frac12, &\text{if}& R_\a \in \m{Q},  \vspace{0.1in} \\
		s < 1, &\text{if}& R_\a \notin \m{Q} \text{ and } s_{\a,\b} \geq 1, \vspace{0.1in} \\
		s < \frac{1 + s_{\a,\b}}{2}, &\text{if}& R_\a \notin \m{Q} \text{ and } s_{\a,\b} < 1.
	\end{array}
	\right.\]
\end{theorem}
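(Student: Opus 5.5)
The plan is to follow the standard Bourgain–Tzvetkov–Oh argument for ill-posedness of the flow map. The target is the third-order (for $C^3$) term in the Picard expansion of the solution map, which we will show cannot be bounded in $H^s_0\times H^s$ for $s$ below the stated thresholds. Concretely, we take initial data $(u_0,v_0)=(\delta\phi,\delta\psi)$ and set $A(t)$ to be the third derivative in $\delta$ at $\delta=0$ of the solution. If the solution map were $C^3$ near the origin with lifespan $T$, then for $t\in[0,T]$ we would have
\[
\|A(t)\|_{H^s\times H^s}\ls \|\phi\|_{H^s}\|\psi\|_{H^s}^2 .
\]
The second-order term is handled first. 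Since $vv_x$ is the only quadratic source in the $u$-equation, it produces
\[
u_2=\int_0^t S_1(t-t')\,\p_x\big(S_{\a,\b}(t')\psi\big)^2 \dd t' ,
\]
where $S_1$ and $S_{\a,\b}$ denote the linear propagators with symbols $e^{i\xi^3t}$ and $e^{i(\a\xi^3-\b\xi)t}$. Feeding $u_2$ back through $(uv)_x$ in the $v$-equation gives the cubic term
\[
v_3=\int_0^t S_{\a,\b}(t-t')\,\p_x\big(u_2 v_1\big)\dd t' .
\]

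Next we choose data concentrated at a few frequencies, for example $\wh\psi$ supported at $\pm n$ and $\pm m$ with $n,m\to\infty$, normalized in $H^s$. Computing $\wh v_3$ at a target frequency leads to time integrals of the form
\[
\int_0^t\int_0^{t'} e^{i\Phi_1 t''}e^{i\Phi_2 t'}\dd t''\dd t' ,
\]
where $\Phi_1=\xi^3-\a\xi_1^3-\a\xi_2^3+\b(\xi_1+\xi_2)$ is the $(u;v,v)$ resonance function, $\xi=\xi_1+\xi_2$, and $\Phi_2$ is the second-stage resonance. When $\Phi_1=0$ these integrals are of size $t^2$. When $\Phi_1\ne0$ but $|\Phi_1|$ is small, they are of size $\min(t^2,\,t/|\Phi_1|)$. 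The factors $\xi$ from the two derivatives then give growth like $|\xi|^2/|\Phi_1|$, which must be compared with the $H^s$ weights.

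Writing $\xi_1=c\,\xi+\eta$ turns $\Phi_1$ into a quadratic in $\eta$. Using $\a=12/(R_\a^2+3)$, the zero set is $\xi_1/\xi=c_{1},c_{2}$ plus a perturbation due to $\b$, and the perturbation is exactly $\lambda=\b/(\a R_\a)$ in the normalization of \eqref{s_0_aux}. The three cases then go as follows.

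\textbf{$R_\a\in\m{Q}$.} Exact resonances $\xi_1=c_i\xi$ exist along an arithmetic progression in $\xi$. The $\b$-term shifts $\Phi_1$ to a nonzero bounded quantity $O(\b)$, still giving $|\Phi_1|\ls1$. A two-frequency-type construction then yields $\|A\|_{H^s}\gs N^{1-2s}\to\infty$ for $s<\frac12$. The data are $u$-frequency $\xi\sim N$ and $v$-frequencies $\xi_1\approx c_1\xi$, $\xi_2\approx c_2\xi$.

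\textbf{$R_\a\notin\m{Q}$.} Here we use the definition of $\nu_\lambda(c_i)$: for every $\sigma<\nu_\lambda(c_i)$ there are infinitely many $(p,q)$ with
\[
\big|c_i-\tfrac pq-\tfrac{\lambda}{q^2}\big|\le q^{-2-\sigma}
\]
(or whatever the exact form of Definition \ref{Def, mbti} is). We choose $\xi=q$ and $\xi_1=p$, adjusted by the bias, which gives $|\Phi_1|\ls q^{1-\sigma}$. The lower bound is then
\[
\|A\|\gs q^{2}\,q^{-(1-\sigma)}\,q^{-2s}\,q^{s}\cdots ,
\]
and bookkeeping with the Sobolev weights ($\xi$, $\xi_1$, $\xi_2$ all $\sim q$) gives a blow-up exactly when $s<\frac{1+\sigma}{2}$. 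Letting $\sigma\uparrow s_{\a,\b}$ gives the third case. When $s_{\a,\b}\ge1$ we can take $\sigma$ close to $1$, making $|\Phi_1|\ls1$, which gives failure for $s<1$.

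\textbf{Main obstacle.} The hardest step is rigor in the lower bound. The main piece has to dominate the contributions of all the other frequency interactions generated by the chosen data, including non-resonant pairs and the second-stage phase $\Phi_2$. The time $t$ must also be fixed, or taken $\sim|\Phi_1|^{-1}$, so that no cancellation occurs, for instance by choosing $t\le T$ with $t|\Phi_1|$ small. We would first make the data as sparse as possible, with single frequencies $\pm$ for each component, and check case by case that the other terms are $O(t^2N^{1-2s}\epsilon)$.
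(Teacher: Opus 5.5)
Your overall plan is the paper's: $\phi=0$ (so $\phi_1=\psi_2=0$ and $\psi_3$ is exactly your $v_3$), $\widehat\psi$ supported on $\pm N_1,\pm N_2$ with $N_1+N_2=N$, and a lower bound for $\mathscr{F}_x\psi_3(N_1,T)$. Here $N_1=c_1N$ if $R_\alpha\in\mathbb{Q}$, and otherwise $N_1$ is the nearest integer to $c_1N+\lambda/N$. The lower bound comes from the interactions in which $\psi(N_1)\psi(N_2)$ creates $u$ at frequency $N$, which then meets $\psi(-N_2)$ and returns to $N_1$.

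\textbf{The case $R_\alpha\in\mathbb{Q}$ rests on a false claim.} The bias does not make $\Phi_1=O(\beta)$. Since $c_1^3+c_2^3=1/\alpha$, the cubic part of $\Phi_1$ vanishes identically at $(\xi_1,\xi_2)=(c_1\xi,c_2\xi)$, leaving $\Phi_1=\beta\xi$, so $|\Phi_1|=|\beta|N$. Only the bracket $h^\xi(\xi_1)-\beta/(3\alpha)$ is $O(\beta)$. This factor $N$ is precisely what yields $N^{1-2s}$; in the paper, $F(N,N_1)=-1/\beta$. With $|\Phi_1|\lesssim 1$, your own heuristic $|\xi|^2/|\Phi_1|$ would give $N^{2-2s}$, i.e. ill-posedness for all $s<1$, contradicting Theorem \ref{thm2}. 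The same slip appears for $s_{\alpha,\beta}\ge 1$: with $\sigma<1$ you only get $|\Phi_1|\lesssim N^{1-\sigma}$, not $\lesssim 1$. That is still enough once you let $\sigma\uparrow 1$.

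\textbf{The point you leave open is what removes your ``main obstacle''.} The size $\min(t^2,t/|\Phi_1|)$ of the double time integral is not automatic. It holds because, in the main configuration, the second-stage phase is exactly $-\Phi_1$: by oddness, $G(-N_1,-N_2,N)=-G(N_1,N_2,-N)$. So the integrand is $1-e^{-it'\Phi_1}$, and $\big|\Phi_1^{-1}\int_0^T(1-e^{-it'\Phi_1})\,dt'\big|\gtrsim\min(T^2,T/|\Phi_1|)$ with $T$ fixed. All other configurations in the support have $|\Phi_1|\sim N^3$ and contribute only $O(TN^{-2})$, so no cancellation issue arises.

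Two smaller corrections:
\begin{itemize}
\item The $C^3$ bound should read $\|\psi_3\|_{H^s}\lesssim\|(\phi,\psi)\|^3$. Your $\|\phi\|\|\psi\|^2$ vanishes for $\phi=0$.
\item The bias enters as $+\lambda/q^2$, matching the expansion $x_1=c_1N+\lambda/N+O(N^{-3})$.
\end{itemize}

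\textbf{Comparison with the paper's route.} With these corrections your argument works, and it differs from the paper only for $s_{\alpha,\beta}\ge 1$. There the paper switches to the second-order term $\phi_2$ at $\xi=N$ with shrinking time $t_N=N^{-2\varepsilon}T$, which proves failure of $C^2$. Your third-order argument with $\sigma\uparrow 1$ keeps $T$ fixed and gives exactly the $C^3$ failure that Theorem \ref{thm4} asserts. In the case $s_{\alpha,\beta}<1$, the $\min(T^2,T/|\Phi_1|)$ bound also spares you the paper's auxiliary lower bound $|G|\gtrsim N^{(1-\nu_\lambda(c_1))/2}$.
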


Combining the results in Theorem \ref{thm2} and Theorem \ref{thm4}, we conclude that for the initial value problem (\ref{mMajdaBiello}), with $\a\in(0,4)\setminus\{1\}$ and $\b\neq 0$, in the space $H^{s}_{0}(\mathbb{T})\times H^{s}(\mathbb{T})$, the critical indices are 
\be\label{ci_res}
	s^{*}(\a, \beta) := \left\{
	\begin{array}{lcl}
		\frac12, &\text{if}& R_\a \in \m{Q}, \vspace{0.1in} \\
		1, &\text{if}& R_\a \notin \m{Q} \text{ and } s_{\a,\b} \geq 1, \vspace{0.1in} \\
		\frac{1 + s_{\a,\b}}{2}, &\text{if}& R_\a \notin \m{Q} \text{ and } s_{\a,\b} < 1,
	\end{array}\right.
\ee
where $s_{\a,\b}$ is given in (\ref{s_0}). Meanwhile, for any fixed $\b\neq 0$, $s^{*}(\a, \b) = \frac12$ for almost every $\a\in(0,4)\setminus\{1\}$.

Now applying this result to (\ref{mmMajdaBirllo}), we demonstrate the effect of the mean of $u_0$ on the critical index for (\ref{GeneralMajdaBiello}) with $\a\in(0,4)\setminus\{1\}$ as below.

\begin{cor}\label{Cor_MBres_ci}
	Consider the initial value problem \eqref{GeneralMajdaBiello} with $\alpha \in (0,4)\setminus \{1\}$ in the space $H^s_{\b}(\mathbb{T})\times H^s(\mathbb{T})$, where $\b\neq 0$. Define $c_1$, $c_2$, $\lambda$ and $s_{\a,\b}$ as in (\ref{s_0_aux}) and (\ref{s_0}). Then the critical index of the analytically LWP for (\ref{GeneralMajdaBiello}) in the space $H^s_{\b}(\mathbb{T})\times H^s(\mathbb{T})$  is 
	\[
		s_{\b}^{*}(\a) := \left\{
		\begin{array}{lcl}
			\frac12, &\text{if}& R_\a \in \m{Q}, \vspace{0.1in} \\
			1, &\text{if}& R_\a \notin \m{Q} \text{ and } s_{\a,\b} \geq 1,  \vspace{0.1in}\\
			\frac{1 + s_{\a,\b}}{2}, &\text{if}& R_\a \notin \m{Q} \text{ and } s_{\a,\b} < 1.
		\end{array}\right.
	\]
	Moreover, for any fixed $\b\neq 0$, $s_{\b}^{*}(\a) = \frac12$ for almost every $a \in (0,4)\setminus\{1\}$.
\end{cor}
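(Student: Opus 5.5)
The plan is to reduce Corollary \ref{Cor_MBres_ci} to Theorems \ref{thm2} and \ref{thm4}, using the mean-subtraction change of variables already described in the introduction.

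\textbf{Reduction.} Fix $\b\neq 0$ and consider the map
\[
\mathcal{T}_\b:(u_0,v_0)\mapsto(u_0-\b,v_0).
\]
It is an affine isometric bijection from $H^s_{\b}(\mathbb{T})\times H^s(\mathbb{T})$ onto $H^s_0(\mathbb{T})\times H^s(\mathbb{T})$, where distance on $H^s_\b\times H^s$ is the one inherited from $H^s\times H^s$. On solutions, the same shift $(u,v)\mapsto(u-\b,v)$ carries solutions of \eqref{GeneralMajdaBiello} with data in $H^s_\b\times H^s$ to solutions of \eqref{mMajdaBiello} with the same $\a,\b$ and data in $H^s_0\times H^s$, and conversely. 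This uses two facts. First, the mean $\frac{1}{2\pi}\wh u(0,t)$ is conserved along \eqref{GeneralMajdaBiello}, because every term in the $u$-equation is an $x$-derivative. Second, the identity $((\wt u+\b)\wt v)_x=(\wt u\wt v)_x+\b\wt v_x$ produces exactly the first-order term in \eqref{mMajdaBiello}, while the $u$-equation is unchanged since $\b$ is constant.

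So if $\Phi_\b$ and $\wt\Phi_\b$ denote the two solution maps, we have
\[
\Phi_\b=\mathcal{T}_\b^{-1}\circ\wt\Phi_\b\circ\mathcal{T}_\b,
\]
where the outer map acts on $C([0,T];H^s\times H^s)$ by adding the constant $\b$ to the first component. Both outer maps are affine isometries, hence analytic with analytic inverses. The lifespan $T$ depends on $\|(u_0,v_0)\|$, and $\|(u_0-\b,v_0)\|_{H^s}$ is controlled by $\|(u_0,v_0)\|_{H^s}$ and $|\b|$, so lifespans transfer up to harmless changes. Consequently $\Phi_\b$ is analytic (respectively $C^3$) near a point if and only if $\wt\Phi_\b$ is analytic (respectively $C^3$) near the image point. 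In particular the two critical indices coincide: $s^*_\b(\a)=s^*(\a,\b)$.

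\textbf{Conclusion.} Theorem \ref{thm2} supplies analytic LWP above each threshold in the three cases. Theorem \ref{thm4} shows failure of $C^3$ LWP below each threshold, and since analytic LWP implies $C^3$ LWP, analytic LWP fails there too. This yields exactly \eqref{ci_res}, and the reduction transfers it to $s^*_\b(\a)$. The almost-every statement follows from the final assertion of Theorem \ref{thm2}: for a.e.\ $\a$ we have LWP for all $s>\frac12$, so $s^*\le\frac12$. The matching lower bound comes from Theorem \ref{thm4}. If $R_\a\in\m{Q}$, the threshold is $\frac12$ directly. Otherwise Proposition \ref{munu} gives $s_{\a,\b}=0$ for a.e.\ $\a$, and then the threshold is $\frac{1+0}{2}=\frac12$.

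\textbf{Main obstacle.} The only delicate point is that $H^s_\b\times H^s$ is an affine subspace rather than a linear one, so analyticity and $C^3$ regularity of the solution map have to be understood relative to it. Composition with affine isometries preserves both notions, so the failure statements of Theorem \ref{thm4} transfer as well. No new analytic estimate is required.
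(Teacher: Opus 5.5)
Your argument is the same as the paper's. You subtract the conserved mean, $(\widetilde u,\widetilde v)=(u-\beta,v)$, to land in \eqref{mMajdaBiello} with zero-mean data, and then read the index off Theorems \ref{thm2} and \ref{thm4}. Conjugating by affine isometries is a correct way to transfer both analyticity and the failure of $C^3$; the latter concerns derivatives at the data $(\beta,0)$ of the original system.

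One cited step in the almost-every part does not hold as stated. Proposition \ref{munu}(3) is for a \emph{fixed} bias $\gamma$. Here both $\rho=c_j(\alpha)$ and $\gamma=\lambda=\beta/(\alpha R_\alpha)$ vary with $\alpha$. A $\rho$-null set for each fixed $\gamma$ says nothing directly about the curve $\alpha\mapsto(c_j(\alpha),\lambda(\alpha))$.

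You do not need $s_{\alpha,\beta}=0$ there anyway. The lower bound $s^*_\beta(\alpha)\geq\frac12$ holds for \emph{every} $\alpha$: Proposition \ref{munu}(2), applied pointwise with $\gamma=\lambda(\alpha)\neq 0$, gives $s_{\alpha,\beta}\geq 0$, so every threshold in Theorem \ref{thm4} is at least $\frac12$. The almost-every upper bound is just the last assertion of Theorem \ref{thm2}.

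If you do want $s_{\alpha,\beta}=0$ for a.e.\ $\alpha$ directly, rerun the Borel--Cantelli covering of Proposition \ref{munu}(3) in the variable $\alpha$ on compact subintervals. Use $c_j'(\alpha)=\mp\frac{1}{\alpha^2R_\alpha}\neq 0$, which dominates the $\lambda'(\alpha)/n^2$ correction for large $|n|$. This bounds the measure of each set $\{\alpha:|c_j(\alpha)-\frac mn+\frac{\lambda(\alpha)}{n^2}|<|n|^{-2-\varepsilon}\}$ by $O(|n|^{-2-\varepsilon})$, with $O(|n|)$ relevant values of $m$. The paper's own appeal to Proposition \ref{munu} for this claim skips the same point.
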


We would like to point out that when $\a\in(0,4)\setminus\{1\}$ and $\b=0$, Oh \cite{Oh2009} found that the critical index was $1$ if $R_\a\in\m{Q}$. Our result in Corollary \ref{Cor_MBres_ci} reduces the critical index to $\frac12$ when $R_\a\in\m{Q}$ and $\b\neq 0$. 

Next, we would like to briefly discuss the main ingredients in this paper. As the standard treatment for dispersive equations, the key is to study the resonance functions associated with the nonlinear terms $v v_x$ and $(uv)_{x}$ in (\ref{mMajdaBiello}). For example, adopting the notations in \cite{Oh2009}, the resonance function associated with the term $v v_x$ in (\ref{mMajdaBiello}) is 
\be\label{H_res_fn}
	H(\xi, \xi_1, \xi_2) = \xi^3 - (\alpha\xi_{1}^3 - \beta\xi_{1}) - (\alpha\xi_{2}^3 - \beta\xi_{2}), 
	\quad \forall\, (\xi, \xi_1, \xi_2) \in \mathcal{A},
\ee
where 
\[\mathcal{A} := \big\{ (\xi, \xi_1, \xi_2) \in \m{Z}^{3}: \xi = \xi_1 + \xi_2 \big\}.\]
Please see (\ref{res_fn_general}) for the general definition of a resonance function. Fixing $\xi$ and substituting $\xi_2 = \xi - \xi_1$, the function $H$ in (\ref{H_res_fn}) can be regarded as a function of $\xi_1$, denoted as $H^{\xi}(\xi_1)$:
\begin{align}
	H^{\xi}(\xi_{1}) &= -3\alpha\xi \left(\xi_{1}^2 - \xi \xi_1 + \frac{\alpha-1}{3\alpha}\xi^2 - \frac{\beta}{3\alpha}\right)  \label{H_xi} \\
    &= -3\a\xi\bigg[ (\xi_1 - c_1 \xi)(\xi_1 - c_2 \xi) - \frac{\b}{3\a} \bigg], \label{H_res_root}
\end{align}
where $c_1$ and $c_2$ are given in (\ref{s_0_aux}).

Similarly, the resonance function associated with the term $(uv)_{x}$ is 
\be\label{H_tilde}
	\wt{H}(\xi, \xi_1, \xi_2) = (\a\xi^3 - \b\xi) - \xi_{1}^3 - (\alpha\xi_{2}^3 - \beta\xi_{2}), 
	\quad \forall\, (\xi, \xi_1, \xi_2) \in \mathcal{A}.
\ee
For fixed $\xi_1$, $\wt{H}$ can be regarded as a function of $\xi$ and can be factored as 
\be\label{H_tilde_root}
	\wt{H}(\xi, \xi_1, \xi_2) = \wt{H}^{\xi_1}(\xi) := 3\a\xi_1 \bigg[ (\xi - c_1 \xi_1)(\xi - c_2 \xi_1) - \frac{\b}{3\a} \bigg].
\ee
We remark that preserving the same coefficients $c_1$ and $c_2$ in both (\ref{H_res_root}) and (\ref{H_tilde_root}) is the main reason why we fix $\xi$ for $H$ while fixing $\xi_1$ for $\wt{H}$. In fact, if we also fix $\xi$ for $\wt{H}$, then it will produce different coefficients $d_1$ and $d_2$ for the decomposition of $\wt{H}(\xi, \xi_1, \xi_2)$, see e.g. equation (19) in \cite{Oh2009}. Since $H^{\xi}(\xi_1)$ in (\ref{H_res_root}) and $\wt{H}^{\xi_1}(\xi)$ in (\ref{H_tilde_root}) share the same coefficients $c_1$ and $c_2$, the treatments for these two terms are very similar, so we will only focus on the analysis of $H^{\xi}(\xi_1)$ in the following illustration.

When $ \alpha = 4 $, $ h^{\xi}(\xi_{1}) $ has a repeated root: $\xi_1 = \xi/2$, which implies that $c_1 = c_2 = \frac12$ and 
\be\label{H_res_fn_cpt}
	H^{\xi}(\xi_{1}) = -3\xi\bigg[\left(2\xi_1 - \xi\right)^2 - \frac{\beta}{3}\bigg].
\ee
If $ \beta = 0 $, then there are infinitely many pairs $(\xi, \xi_1)\in\mathbb{Z}^2 $ such that the resonance function $H^{\xi}(\xi_{1})$ is zero, which makes it difficult to compensate for the loss of derivatives in the space. If $ \beta = 3n^2 $ with $ n\in\mathbb{N}\setminus\{0\}$, we have
\begin{equation}
H^{\xi}(\xi_{1}) = -3\xi(2\xi_1-\xi-n)(2\xi_1-\xi+n),
\end{equation}
then there also exist infinitely many pairs $(\xi, \xi_1)\in\mathbb{Z}^2 $ such that the resonance function vanishes. Only when $ \beta\neq 3n^2 $ for all $n\in\m{N}$, $\left(2\xi_1 - \xi\right)^2 - \frac{\beta}{3}$ never vanishes, which makes it possible to improve the well-posedness index of the problem, as stated in Theorem \ref{thm1}.

Next, when $ \alpha \in (0,4) \setminus \{1\} $, we write $H^{\xi}(\xi_1)$ in (\ref{H_res_root}) as $H^{\xi}(\xi_1) = -3\a\xi \big[ h^{\xi}(\xi_1) - \frac{\b}{3\a} \big]$,
where 
\[
h^{\xi}(\xi_1):= (\xi_1 - c_1 \xi)(\xi_1 - c_2\xi).
\]
Suppose $|\xi| = N \gg 1$. If $\xi_1$ is away from both $c_1\xi$ and $c_2\xi$, then $|h^{\xi}(\xi_1)| \sim N^2 \gg \frac{|\b|}{3\a}$, so $|H^{\xi}(\xi_1)|$ possesses a large lower bound $\sim N^3$. If $\xi_1$ is near $c_1\xi$, then $|\xi_1 - c_2 \xi| \sim (c_1 - c_2)N$ and meanwhile, it follows from the key step in \cite{Oh2009} that 
\[
    |\xi_1 - c_1\xi| = |\xi|\Big| c_1 - \frac{\xi_1}{\xi} \Big| \gs |\xi| \, \frac{1}{|\xi|^{2+\nu(c_1)+\eps}} \sim N^{-1-\nu(c_1)-\veps},
\]
where $\nu(c_1)$ is called the minimal type index of $c_1$ (see Definition 1 and 2 in \cite{Oh2009}). From Diophantine approximation theory, $\nu(c_1)\geq 0$ and $\nu(c_1) = 0$ for almost every $c_1$. When $\nu(c_1) = 0$, $|h^{\xi}(\xi_1)| \gs N^{-\eps}$ which provides a lower bound $N^{1-\veps}$ for $|H^{\xi}(\xi_1)|$ if $\b = 0$. However, in the current situation where $\b\neq 0$, the lower bound $N^{-\veps}$ of $|h^{\xi}(\xi_1)|$ does not dominate $\frac{|\b|}{3\a}$, so this argument can not provide a lower bound for $|H^{\xi}(\xi_1)|$. In order to resolve this issue, one has to discover a finer structure of the resonance function $H^{\xi}(\xi_1)$. In fact, when $|\xi|$ is so large that
\be\label{large_xi}
    |\xi|^2 \geq \frac{24 |\b|}{12-3\a}, 
\ee
then we can incorporate $\frac{\b}{3\a}$ into $h^{\xi}(\xi_1)$ to decompose the resonance function $H^{\xi}(\xi_1)$ below: 
\be\label{H_xi_decom2}
	H^{\xi}(\xi_{1}) = -3\alpha\xi(\xi_1-x_1)(\xi_{1}-x_2),
\ee
where
\begin{equation}
	x_1 = \frac{1}{2}\xi+\frac{1}{6}\sqrt{(R_{\alpha}\xi)^2+\frac{12\b}{\alpha}}, 
	\qquad 
	x_2 = \frac{1}{2}\xi-\frac{1}{6}\sqrt{(R_{\alpha}\xi)^2+\frac{12\b}{\alpha}}, 
	\qquad 
	R_{\alpha} = \sqrt{12/\alpha-3}.
\end{equation}
The case $\xi>0$ and the case $\xi<0$ can be handled similarly, so let us focus on the case $\xi>0$. Due to the assumption (\ref{large_xi}), we can expand $x_1$ and $x_2$ in terms of the order of $\xi$ as follows:
\be\label{xi_decom}
    x_1 = c_1\xi + \frac{\lam}{\xi} + Q_1(\xi), \qquad
    x_2 = c_2\xi - \frac{\lam}{\xi} + Q_2(\xi),
    \qquad 
    \lam := \frac{\b}{\a R_\a}, 
\ee
where $c_1 = \frac12 + \frac{R_\a}{6}$ and $c_2 = \frac12 - \frac{R_\a}{6}$ are as defined in (\ref{s_0_aux}), and 
\[
    |Q_{j}(\xi)| \leq \frac{12\lam^2}{R_\a |\xi|^3} = O(|\xi|^{-3}), \qquad\, j=1,2.
\]

The challenging part in the analysis is near the resonance set for $H^{\xi}$ in (\ref{H_xi_decom2}), that is when $\xi_1$ is near $x_1$ or $x_2$. Without loss of generality, we consider the region where $\xi_1$ is near $x_1$. In this region, $\xi_1$ is also near $c_1 \xi$ based on (\ref{xi_decom}), so 
\[
    |\xi_1 - x_2| \sim |c_1 - c_2| \xi \sim R_\a \xi.
\]
Therefore, $H^{\xi}(\xi_1) \sim \xi^2 (\xi_1 - x_1)$. Then according to the expression for $x_1$ in (\ref{xi_decom}), 
\[
    H^{\xi}(\xi_1) \sim \xi^3\bigg[\frac{\xi_{1}}{\xi} - c_1 - \frac{\lam}{\xi^2} + O\bigg( \frac{1}{\xi^4} \bigg)\bigg]. 
\]
This inspires us to consider the estimate of the lower bound of $ \big|c_1-\frac{\xi_{1}}{\xi}+\frac{\lam}{\xi^2}\big| $; whereas when $ \beta = 0 $, we only need to consider $ \big|c_1-\frac{\xi_{1}}{\xi}\big| $. When $\b\neq 0$, this is similar to the Diophantine approximation but with an extra term $ \frac{\lam}{\xi^2} $. Thus, we need to obtain the best lower bound for $\big|c_1 - \frac{\xi_1}{\xi} + \frac{\lam}{\xi^2}\big|$ with respect to $\xi_1,\xi\in\m{Z}$, where $ \lam\neq 0$ and $|\xi|$ is large. Therefore, based on the concept of the minimal type index, see e.g. (Definition 1 and Definition 2 in \cite{Oh2009}), we generalize that concept by incorporating biases as follows. 

\begin{mydef}\label{Def, mbti}
	A real number $\rho$ is said to be of $\gamma$-biased type $\nu$ if there exist positive constants $K = K(\rho, \nu, \gamma)$ and $N = N(\rho,\gamma)$ such that the inequality
	\[
		\left|\rho - \frac{m}{n}+\frac{\gamma}{n^2}\right| \ge \frac{K}{|n|^{2+\nu}}
	\]
	holds for all $(m,n) \in \mathbb{Z}^2$ with $|n|>N$. In addition, 
	\[
		\nu_{\gamma}(\rho) := \inf\{\nu \in \mathbb{R} : \rho \text{ is of $\gamma$-biased type } \nu\} 
	\]
	is called the minimal $\gamma$-biased type index of $\rho$, where the infimum is understood as $\infty$ if $\{\nu \in \mathbb{R} : \rho \text{ is of $\gamma$-biased type } \nu\}$ is empty.
\end{mydef}

In Proposition \ref{munu}, we find that for any fixed $\gamma$, $\nu_\gamma(\rho) = 0$ for almost every $ \rho \in \mathbb{R}$. This implies that regardless of the value of the mean $\b$ of $u_0$, the critical indices are the same for almost every $\a\in(0,4)\setminus\{1\}$. 
However, when $R_\alpha \in \mathbb{Q} $, compared with the known fact $\nu_{0}(c_1) = \nu_{0}(c_2) = \infty$, we show that $\nu_{\lambda}(c_1)  = \nu_{\lambda}(c_2)= 0$ as long as $\lam\neq 0$ (which is equivalent to $\b\neq 0$ since $\lam = \frac{\b}{\a R_\a}$). This leads to an enhancement of the critical well-posedness indices as shown in Theorem \ref{thm2} and Corollary \ref{Cor_MBres_ci}.

This paper is organized as follows. Section \ref{Sec, pre} introduces the notation and presents some useful linear estimates. Next, we prove Theorem \ref{thm1} for the most resonance case $\a = 4$ in Section \ref{Sec, pf_thm1}, and prove Theorem \ref{thm2} for the other resonance cases $\a\in(0,4)\setminus\{1\}$ in Section \ref{Sec, pf_thm2}. The key ingredients in the proofs in Section \ref{Sec, pf_thm1} and Section \ref{Sec, pf_thm2} are the bilinear estimates and the theory of the biased minimal type indices. Finally, the ill-posedness results, namely Theorem \ref{thm3} and Theorem \ref{thm4}, are justified in Section \ref{Sec, ip}.

\section{Preliminary}\label{Sec, pre}
It is well-known that the well-posedness problems of KdV type equations in $H^{s}$ are subcritical if $s>-\frac32$. In such cases, we can perform some invariant scaling to the equations \eqref{mMajdaBiello} such that the scaled initial data is small in $H^{s}$, which can be helpful in establishing the well-posedness.

Let $\sigma\geq 1$ and perform the scaling to the equations \eqref{mMajdaBiello} by $(u,v) \longmapsto (u^{\sigma}, v^{\sigma})$ defined as follows:
\begin{equation}\label{Scaling_form}
(u^{\sigma}, v^{\sigma})(x,t)= \frac{1}{\sigma^2} (u, v)\left(\frac{x}{\sigma},\frac{t}{\sigma^3}\right).
\end{equation}
Then $u^{\sigma}$ and $v^{\sigma}$ are defined on $\mathbb{T}_{\sigma} := [0,2\pi\sigma)$, and $(u^{\sigma}, v^{\sigma})$ satisfy the following equations
\begin{equation}\label{mMajdaBielloScaling}
\left\{\begin{aligned}
&u_t^{\sigma}+u_{xxx}^{\sigma}+v^{\sigma}v_x^{\sigma}= 0 , \\
&v_t^{\sigma}+ \alpha v_{xxx}^{\sigma}+\beta_{\sigma} v_x^{\sigma}+(u^{\sigma}v^{\sigma})_x=0 , \\
&(u^{\sigma},v^{\sigma})\mid_{t=0} = (u^{\sigma}_{0}, v^{\sigma}_{0}) \in H^s(\mathbb{T}_{\sigma})\times H^s(\mathbb{T}_{\sigma}),
\end{aligned}\right.
\quad\quad x\in \mathbb{T}_{\sigma}, \, t\in \mathbb{R},
\end{equation}
where 
\[ 
    \beta_{\sigma} := \frac{\beta}{\sigma^2} \qquad \text{and} \qquad
    (u^{\sigma}_{0}, v^{\sigma}_{0})(x) := \left(\frac{1}{\sigma^2}u_0\left(\frac{x}{\sigma}\right),\frac{1}{\sigma^2}v_0\left(\frac{x}{\sigma}\right)\right). 
\]

Based on $\m{T}_{\sigma}$, we denote its frequency space as
$\m{Z}_{\sigma} :=\big\{k \,\big|\, k=n/\sigma\,\,\, \text{for some $n\in\m{Z}$}\big\}$. 
For $1 \leq p < \infty$, we say $f \in L^p(\mathbb{Z}_\sigma)$ if 
\begin{equation*}
	\|f\|_{L^p(\mathbb{Z}_\sigma)} = \left(\int_{\mathbb{Z}_\sigma} |f(\xi)|^p \dd\xi\right)^{1/p} := \left(\frac{1}{2\pi\sigma}\sum_{\xi \in \mathbb{Z}_\sigma}|f(\xi)|^p\right)^{1/p} < \infty.
\end{equation*}
Then we define the Fourier transform on $\m{T}_{\sigma}$ to be 
\begin{equation*}
	\widehat{f}(\xi) = \int_0^{2\pi \sigma} e^{-ix\xi}f(x) \dd x, \quad\forall\,\xi\in\m{Z}_{\sigma}.
\end{equation*} 
Combined with the Fourier inversion transform, we can rewrite $f$ as follows:
\begin{equation*}
	f(x) = \int_{\mathbb{Z}_{\sigma}} e^{ix\xi}\widehat{f}(\xi) \dd \xi = \frac{1}{2\pi \sigma}\sum_{\xi \in \mathbb{Z}_\sigma} e^{ix\xi}\widehat{f}(\xi).
\end{equation*}

Since (\ref{mMajdaBielloScaling}) is a nonlinear system and (\ref{mMajdaBielloScaling})$_{2}$ contains a first order term $\beta_{\sigma} v^{\sigma}_{x}$, we first study the following homogeneous KdV equation (\ref{linear eq}) with a first-order term $\beta_\sigma w_{x}$:
\be\label{linear eq}
\left\{\begin{array}{ll}
	\p_{t}w + \alpha w_{xxx} + \beta_\sigma w_{x} = 0, & x\in\m{T}_{\sigma},\, t\in\m{R},\\
	w(0)=w_{0}\in H^{s}(\m{T}_{\sigma}),
\end{array}\right.\ee
where $\alpha \in \m{R}\setminus\{0\}$, $\b\in\m{R}$, $\sigma\geq 1$ and $\b_\sigma := \b/\sigma^2$.
The solution to (\ref{linear eq}) is given explicitly by 
\be\label{semigroup op}
w(x,t)=\int_{\m{Z}_{\sigma}}e^{i\xi x}e^{i(\alpha \xi^3-\beta_\sigma\xi)t}\widehat{w_{0}}(\xi)\dd \xi =: S_{\a,\b_\sigma}(t)w_{0}(x).\ee
When \( \alpha = 1 \) and \( \beta = 0 \), the corresponding semigroup operator $S_{\a,\b_\sigma}$ is simply denoted as \( S(t) \).

The solutions of the KdV equation (\ref{KdV}) are usually studied in the Fourier restriction spaces which were originally introduced in \cite{Bourgain93, KPV96SahrpIndex}. For the case on the real line, the KdV equation was investigated in the Fourier restriction spaces $X^{s,b}$ with $b > \frac{1}{2}$. But for the periodic case, $b$ has to be chosen as $\frac{1}{2}$, which unfortunately does not guarantee $X^{s,\frac12}$ to lie in $C(\mathbb{R}_t; H^s_x )$. Thus, we need to consider the problem in $Y^s$, which is an adaptation of $X^{s,\frac12}$ such that it belongs to the space $C(\mathbb{R}_t; H^s_x)$, see e.g. \cite{Iterm03}.

Now for the KdV-type equation (\ref{linear eq}) with a first-order term $\beta_\sigma w_{x}$, we
introduce generalized function spaces in the following definition which take the first order term into effect.
\begin{mydef}[\cite{Bourgain93,KPV96SahrpIndex,Iterm03}]
\label{Def, FR space}
	For any $\a, \b, \,s,\,b,\,\sigma\in\m{R}$ with $\a\neq 0$ and $\sigma\geq 1$, the Fourier restriction spaces, $X^{s,b}_{\a,\b_\sigma}$, are defined to be the completion of the Schwartz space $\mathscr{S}(\m{T}_{\sigma}\times\m{R})$ with respect to the norm
	\be\label{FR norm}
\|\omega\|_{X^{s,b}_{\alpha,\beta_\sigma}} := \left\| \langle \xi \rangle^s \langle \tau - \alpha \xi^3 + \beta_\sigma \xi \rangle^b \wh{w}(\xi,\tau) \right\|_{L^2(\mathbb{Z}_\sigma \times \mathbb{R})},\ee
	where $\wh{w}$ refers to the space-time Fourier transform of $w$ and     
    \[\b_\sigma := \b/\sigma^2.\]  
    Similarly, the spaces $Y^s_{\alpha,\beta_\sigma}$ are defined in the same way with respect to the following norm:
\begin{equation}\label{work_space}
	\|\omega\|_{Y^s_{\alpha,\beta_\sigma}} := \|\omega\|_{X^{s,\frac{1}{2}}_{\alpha,\beta_\sigma}} + \|\langle\xi\rangle^s \widehat{\omega}(\xi,\tau)\|_{L^2_\xi(\mathbb{Z}_\sigma)L^1_\tau(\mathbb{R})}.
\end{equation}
\end{mydef}
For technical needs in controlling the $Y^s_{\alpha,\beta_\sigma}$ norm of the Duhamel terms (see \eqref{Duhamel_Estimate}), we define the spaces $Z^s_{\alpha,\beta_\sigma}$ via the following norm
\begin{equation}\label{auxillaryspace}
    \|\omega\|_{Z^s_{\alpha,\beta_\sigma}} := \|w\|_{X^{s,-\frac{1}{2}}_{\alpha,\beta_\sigma}} + \|\langle\xi\rangle^s\langle\tau-\alpha\xi^3+\beta_\sigma\xi\rangle^{-1}\widehat{\omega}(\xi,\tau)\|_{L^2_\xi(\mathbb{Z_\sigma})L^1_\tau(\mathbb{R})}.
\end{equation}

For notational convenience, $\|\cdot\|_{X^{s,b}_{1,0}}$ is denoted as $\|\cdot\|_{X^{s,b}_{\sigma}}$ in what follows, likewise, the same holds for $\|\cdot\|_{Y^s_{\sigma}}$and $\|\cdot\|_{Z^s_{\sigma}}$. On the other hand, we fix $\eta \in C^\infty_c(\mathbb{R})$ to be a smooth cutoff function supported on $[-2,2]$ with $\eta \equiv 1$ on $[-1,1]$. Furthermore, we write $A\lesssim B$ to mean $A\le C B$, where the constant $C$ may depend only on the parameters $s$, $\alpha$ and $\beta$.

Next, we list some necessary and standard lemmas about linear estimates, the proofs can be found in \cite{Bourgain93,Iterm03,Oh2007}.

\begin{lemma}\label{lemma1}
For any $s,\a,\b, \sigma\in\m{R}$ with $\a\neq 0$ and $\sigma\geq 1$, 
\begin{equation}\label{First_Linear_Estimate}
 \|\eta(t)S_{\alpha,\beta_\sigma}(t)f\|_{Y^{s}_{\alpha,\beta_\sigma}} \le C_1\|f\|_{H^s_x(\m{T}_{\sigma})},
\end{equation}
and for the Duhamel terms, 
\begin{equation}\label{Duhamel_Estimate}
\left\|\eta(t) \int_0^t S_{\alpha,\beta_\sigma}(t-t') g(t') \,dt'\right\|_{Y^s_{\alpha,\beta_\sigma}} \le C_2 \|g\|_{Z^s_{\alpha,\beta_\sigma}}.
\end{equation}
where the constants $C_1$ and $C_2$ depend only on $\eta$.
\end{lemma}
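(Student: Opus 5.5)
The plan is to reduce everything to the standard $X^{s,b}$ estimates for $\alpha=1,\beta=0$ on $\mathbb{T}_\sigma$ via a change of variables in the modulation variable. The first-order term only shifts the dispersion relation $\tau=\alpha\xi^3-\beta_\sigma\xi$. All norms in $Y^s_{\alpha,\beta_\sigma}$ and $Z^s_{\alpha,\beta_\sigma}$ depend only on $\langle\xi\rangle^s$ and on the weight $\langle\tau-\phi(\xi)\rangle$, where $\phi(\xi)=\alpha\xi^3-\beta_\sigma\xi$. So for each fixed $\xi\in\mathbb{Z}_\sigma$ I translate $\tau\mapsto\tau+\phi(\xi)$ and reduce to one-dimensional estimates in $t$ that are uniform in $\xi$.

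For \eqref{First_Linear_Estimate}, compute the space-time Fourier transform:
\[
\widehat{\eta S_{\alpha,\beta_\sigma}f}(\xi,\tau)=\widehat{\eta}(\tau-\phi(\xi))\,\widehat f(\xi).
\]
Then the $X^{s,1/2}$ part equals $\|\langle\cdot\rangle^{1/2}\widehat\eta\|_{L^2_\tau}\,\|f\|_{H^s}$, and the $L^2_\xi L^1_\tau$ part equals $\|\widehat\eta\|_{L^1}\|f\|_{H^s}$. Both constants are finite because $\eta$ is Schwartz, and they are independent of $\alpha,\beta,\sigma$.

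For \eqref{Duhamel_Estimate}, conjugate by the group. Write $g=S_{\alpha,\beta_\sigma}(t)G$, that is, $\widehat G(\xi,\tau)=\widehat g(\xi,\tau+\phi(\xi))$. The Duhamel term becomes $S_{\alpha,\beta_\sigma}(t)\big[\eta(t)\int_0^t G(t')dt'\big]$. Since the free group is an isometry intertwining the shifted weights, it suffices to prove, for each fixed $\xi$ and with $F=\widehat G(\xi,\cdot)$, the scalar estimate
\[
\Big\|\langle\tau\rangle^{1/2}\mathcal{F}_t\Big[\eta(t)\int_0^t F^\vee\Big]\Big\|_{L^2_\tau}+\Big\|\mathcal{F}_t\Big[\eta(t)\int_0^t F^\vee\Big]\Big\|_{L^1_\tau}\lesssim \|\langle\tau\rangle^{-1/2}F\|_{L^2}+\|\langle\tau\rangle^{-1}F\|_{L^1}.
\]
Multiplying by $\langle\xi\rangle^s$ and taking $\ell^2_\xi$ then gives the claim. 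To prove the scalar estimate, use the identity
\[
\int_0^t e^{it'\tau}\,dt'=\frac{e^{it\tau}-1}{i\tau},
\]
and split $F=F\mathbf{1}_{|\tau|\le1}+F\mathbf{1}_{|\tau|>1}$. For the low part, Taylor expand $(e^{it\tau}-1)/(i\tau)=\sum_{k\ge1}t^k(i\tau)^{k-1}/k!$. This gives a sum of terms $\eta(t)t^k$ times constants bounded by $\|F\mathbf{1}_{|\tau|\le1}\|_{L^1}\lesssim\|\langle\tau\rangle^{-1}F\|_{L^1}$, and the series converges because the norms of $\eta t^k$ grow only like $C^k$.

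For the high part, split into two pieces: $\eta(t)\,\mathcal{F}^{-1}[F/(i\tau)]$ and $\eta(t)\int F(\tau)/(i\tau)\,d\tau$. The second is $\eta$ times a constant bounded by $\|\langle\tau\rangle^{-1}F\|_{L^1}$, which is harmless. The first is the main obstacle, since multiplication by $\eta$ is not trivially bounded on the $H^{1/2}$-type norm together with the $L^1$ part. Multiplication by a Schwartz function is bounded on $H^{b}_t$ for every $b$, which gives $\|\langle\tau\rangle^{1/2}(\widehat\eta*(F/\tau))\|_{L^2}\lesssim\|\langle\tau\rangle^{-1/2}F\|_{L^2}$. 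For the $L^1_\tau$ norm, Young's inequality gives $\|\widehat\eta*(F/\tau)\|_{L^1}\le\|\widehat\eta\|_{L^1}\|\langle\tau\rangle^{-1}F\|_{L^1}$. This is exactly why the $L^1_\tau$ component is included in $Z^s$: $\langle\tau\rangle^{-1/2}F\in L^2$ alone would not control $F/\tau$ in $L^1$.

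All constants depend only on $\eta$, which completes the proof.
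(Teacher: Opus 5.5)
The paper gives no proof of this lemma and simply defers to \cite{Bourgain93,Iterm03,Oh2007}. Your argument is the standard proof from those references and is correct, with constants depending only on $\eta$. You shift $\tau\mapsto\tau+\alpha\xi^3-\beta_\sigma\xi$ (equivalently, conjugate by $S_{\alpha,\beta_\sigma}$) so the first-order term disappears, then prove the one-dimensional estimate in $t$ uniformly in $\xi$. For that you use the Taylor expansion on $|\tau|\le 1$ and the splitting of $(e^{it\tau}-1)/(i\tau)$ on $|\tau|>1$, where the $L^1_\tau$ component of $Z^s$ is needed.
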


\begin{lemma}\label{lemma3}
For any $s,\a,\b, \sigma\in\m{R}$ with $\a\neq 0$ and $\sigma\geq 1$, there exists a universal constant $C$ such that 
\begin{equation}\label{L4L2_X0_14}
\|f\|_{L^4_t L^2_x(\m{T}_\sigma \times \m{R})} \le C\|f\|_{X^{0,\frac{1}{4}}_{\alpha,\beta_\sigma}}.
\end{equation}
\end{lemma}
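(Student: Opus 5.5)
We prove the $L^4_tL^2_x$ estimate (\ref{L4L2_X0_14}) by interpolating between two endpoint estimates, after reducing to the free case by a modulation change of variables.

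\textbf{Reduction.} Set $\phi(\xi) := \alpha\xi^3 - \beta_\sigma\xi$. Write
\[
g(\xi,\tau) := \widehat{f}(\xi, \tau + \phi(\xi)),
\]
so that
\[
f(x,t) = \int_{\mathbb{Z}_\sigma} e^{ix\xi} e^{it\phi(\xi)} \int_{\mathbb{R}} e^{it\tau} g(\xi,\tau) \, d\tau \, d\xi .
\]
Define $F(x,t) := \int_{\mathbb{Z}_\sigma}\int_{\mathbb{R}} e^{ix\xi + it\tau} g(\xi,\tau)\, d\tau\, d\xi$. For each fixed $t$, the function $f(\cdot,t)$ has spatial Fourier coefficients $e^{it\phi(\xi)}\int e^{it\tau}g(\xi,\tau)\,d\tau$. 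These differ from those of $F(\cdot,t)$ only by the unimodular factor $e^{it\phi(\xi)}$. By Plancherel in $x$ on $\mathbb{T}_\sigma$,
\[
\|f(\cdot,t)\|_{L^2_x} = \|F(\cdot,t)\|_{L^2_x} \quad \text{for every } t.
\]
Moreover, $\|f\|_{X^{0,b}_{\alpha,\beta_\sigma}} = \|\langle\tau\rangle^b g\|_{L^2_{\xi,\tau}}$. It therefore suffices to prove
\[
\|F\|_{L^4_tL^2_x} \le C\|\langle\tau\rangle^{1/4}\widehat{F}\|_{L^2_{\xi,\tau}},
\]
which is an estimate with no dispersion at all. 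This explains why the constant can be taken independent of $\alpha$, $\beta$, $\sigma$ and $s$ (and the weight $\langle\xi\rangle^s$ plays no role since $s=0$).

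\textbf{Interpolation.} Consider the map $g \mapsto F$.

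At $b=0$, Plancherel in both variables gives
\[
\|F\|_{L^2_tL^2_x} = \|g\|_{L^2_{\xi,\tau}}.
\]

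At $b=\tfrac12+$, for each fixed $\xi$, Cauchy–Schwarz in $\tau$ gives
\[
\sup_t \Big|\int e^{it\tau} g(\xi,\tau)\,d\tau\Big| \le \|g(\xi,\cdot)\|_{L^1_\tau} \lesssim \|\langle\tau\rangle^{1/2+}g(\xi,\cdot)\|_{L^2_\tau}.
\]
Then Plancherel in $x$ yields $\|F\|_{L^\infty_tL^2_x} \lesssim \|g\|_{X^{0,1/2+}}$.

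Complex interpolation between these two endpoints gives $L^p_tL^2_x$ bounds with weight exponent $b = \theta(\tfrac12+)$, where $\tfrac1p = \tfrac{1-\theta}{2}$. For $p=4$ this requires $\theta = \tfrac12$, so the exponent is $\tfrac14+$. This is slightly worse than the claimed $\tfrac14$.

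\textbf{The sharp exponent.} Getting exactly $b=\tfrac14$ is the main obstacle. The cleanest route is the one-dimensional Sobolev embedding
\[
H^{1/4}_t(\mathbb{R}) \hookrightarrow L^4_t(\mathbb{R}),
\]
applied in vector-valued form with values in the Hilbert space $L^2_x$. Indeed, $F$ regarded as an $L^2_x$-valued function of $t$ has time Fourier transform $g(\cdot,\tau)$. Hence
\[
\|F\|_{H^{1/4}_t L^2_x} = \|\langle\tau\rangle^{1/4} g\|_{L^2}.
\]
The Hardy–Littlewood–Sobolev proof of the embedding works for Hilbert-space-valued functions: by Minkowski's inequality, $F = J^{-1/4}G$ with $\|G\|_{L^2_tL^2_x}$ controlled, and the Bessel kernel bound gives
\[
\|J^{-1/4}G(t)\|_{L^2_x} \le \big(|K_{1/4}| * \|G\|_{L^2_x}\big)(t),
\]
where $|K_{1/4}(t)|\lesssim |t|^{-3/4}$ near zero and decays exponentially at infinity. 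Scalar HLS in $t$ then maps $L^2\to L^4$, because $\tfrac14 = \tfrac12 - \tfrac14$. This yields (\ref{L4L2_X0_14}) with a universal constant $C$, and it bypasses the endpoint-interpolation loss.
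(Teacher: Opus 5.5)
Your proof is correct. The modulation change $g(\xi,\tau)=\widehat f(\xi,\tau+\phi(\xi))$ removes the dispersion at no cost in $L^2_x$ for each fixed $t$, and the embedding $H^{1/4}_t(\mathbb{R})\hookrightarrow L^4_t(\mathbb{R})$ then gives the exponent exactly $\frac14$ with a constant independent of $\alpha,\beta,\sigma$; this is the standard argument for this estimate, which the paper does not prove itself but cites from Bourgain, Colliander--Keel--Staffilani--Takaoka--Tao and Oh. Your vector-valued HLS step is valid, but it can be shortened: Minkowski's inequality gives $\|F\|_{L^4_tL^2_x}\le\|F\|_{L^2_xL^4_t}$, after which the scalar embedding applies for each fixed $x$, and the interpolation detour, which only yields $\frac14+$, can be dropped.
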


\begin{lemma}\label{lemma4}
For any $s,\a,\b, \sigma\in\m{R}$ with $\a\neq 0$ and $\sigma\geq 1$, there exists a constant $C=C(\a)$ such that 
\begin{equation}\label{L4L4_X013}
  \|f\|_{L^4_{x,t}(\m{T}_\sigma \times \m{R})} \le C  \|f\|_{X^{0,\frac{1}{3}}_{\alpha,\beta_\sigma}}.
\end{equation} 
\end{lemma}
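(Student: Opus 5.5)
The statement to prove is Lemma \ref{lemma4}, the periodic $L^4$ Strichartz estimate $\|f\|_{L^4_{x,t}}\le C\|f\|_{X^{0,1/3}_{\a,\b_\sigma}}$ on $\m{T}_\sigma\times\m{R}$. The plan is to first remove the first-order term by a Galilean-type change of variables, and then to run Bourgain's counting argument for the $L^4$ norm.

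\textbf{Step 1: removing the first-order term.} For $f$ on $\m{T}_\sigma\times\m{R}$, define $g(x,t)=f(x-\b_\sigma t, t)$. On the Fourier side,
\[
\wh g(\xi,\tau)=\wh f(\xi,\tau-\b_\sigma\xi),
\]
so the weight $\la\tau-\a\xi^3+\b_\sigma\xi\ra$ for $f$ becomes $\la\tau-\a\xi^3\ra$ for $g$. Hence $\|g\|_{X^{0,b}_{\a,0}}=\|f\|_{X^{0,b}_{\a,\b_\sigma}}$. The map $(x,t)\mapsto(x-\b_\sigma t,t)$ preserves Lebesgue measure on $\m{T}_\sigma\times\m{R}$, so $\|g\|_{L^4}=\|f\|_{L^4}$. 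It therefore suffices to treat $\b_\sigma=0$, and the constant will be independent of $\b$.

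\textbf{Step 2: reduction to a counting bound.} Write $\|f\|_{L^4}^2=\|f^2\|_{L^2}$ and decompose dyadically, $f=\sum_{j\ge0} f_j$, where $f_j$ is localized to $\la\tau-\a\xi^3\ra\sim 2^j$. By the triangle inequality and Cauchy--Schwarz over dyadic pairs, it is enough to show
\[
\|f_jf_k\|_{L^2}\ls 2^{(j+k)/3}\,2^{-\eps|j-k|}\|f_j\|_{L^2}\|f_k\|_{L^2},
\]
or simply the bound with a factor $2^{\frac{j+k}{2}\theta}$ that can be summed.

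By Plancherel and Cauchy--Schwarz, $\|\wh f_j*\wh f_k\|_{L^2}$ is controlled by $\sup_{\xi,\tau}|E_{\xi,\tau}|^{1/2}$ times $\|f_j\|\|f_k\|$. Here
\[
E_{\xi,\tau}=\{(\xi_1,\tau_1)\in\m{Z}_\sigma\times\m{R}: |\tau_1-\a\xi_1^3|\ls 2^j,\ |\tau-\tau_1-\a(\xi-\xi_1)^3|\ls 2^k\},
\]
measured with the normalized counting measure $\frac{1}{2\pi\sigma}\sum$ in $\xi_1$.

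\textbf{Step 3: estimating $|E_{\xi,\tau}|$.} For fixed $\xi_1$, the $\tau_1$-measure is at most $\min(2^j,2^k)$. The admissible $\xi_1$ must satisfy
\[
\big|\tau-\a\xi_1^3-\a(\xi-\xi_1)^3\big|\ls 2^{\max(j,k)}.
\]
The expression $\a\xi_1^3+\a(\xi-\xi_1)^3=\a\xi^3-3\a\xi\xi_1(\xi-\xi_1)$ is quadratic in $\xi_1$. So $\xi_1$ lies in at most two intervals of length
\[
\ls \big(2^{\max}/|\a\xi|\big)^{1/2}\le \big(2^{\max}/|\a|\big)^{1/2}\quad\text{when }|\xi|\ge 1/\sigma .
\]
More robustly, it lies in a set of length $\ls 2^{\max/2}|\a|^{-1/2}+1$, since the quadratic's coefficient is $\ge 3|\a|\cdot|\xi|$ and $\xi=0$ is handled separately. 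The normalized count of lattice points in $\m{Z}_\sigma$ in an interval of length $L$ is $\ls L+\sigma^{-1}\le L+1$.

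This gives $|E|\ls 2^{\min}(1+2^{\max/2})$, and therefore
\[
\|f_jf_k\|_{L^2}\ls 2^{\min/2}\,2^{\max/4}\|f_j\|\|f_k\|\le 2^{(j+k)/3}2^{-(\max-\min)/12}\|f_j\|\|f_k\|,
\]
which is summable. The case $\xi=0$ gives measure $\ls 2^{\min}\cdot(\text{length }\ls 2^{\max/3})$ using the cubic term, which is again fine. Symmetric handling and interpolation finish the argument with $C=C(\a)$.

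\textbf{Main obstacle.} The delicate point is uniformity in $\sigma\ge1$ of the lattice count for the scaled lattice $\m{Z}_\sigma$, together with the degenerate regime of small $|\xi|$. I would handle this using the normalization $\frac{1}{2\pi\sigma}$, which makes the count of points in an interval of length $L$ at most $L+1$ uniformly in $\sigma$, and by separating the $|\xi|\ls 1$ region.
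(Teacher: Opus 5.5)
Your framework is the right one: Galilean removal of $\beta_\sigma$, dyadic decomposition in modulation, and Cauchy--Schwarz reducing $\|f_jf_k\|_{L^2}$ to $\sup|E_{\xi,\tau}|$. This is Bourgain's argument, which the paper does not reproduce but cites (with Colliander--Keel--Staffilani--Takaoka--Tao for uniformity on $\mathbb{T}_\sigma$). Step 1 has a harmless sign slip: $g(x,t)=f(x-\beta_\sigma t,t)$ gives $\widehat g(\xi,\tau)=\widehat f(\xi,\tau+\beta_\sigma\xi)$, so you want $f(x+\beta_\sigma t,t)$. The step can be skipped anyway, since $-\beta_\sigma(\xi_1+\xi_2)=-\beta_\sigma\xi$ is constant once $\xi$ is fixed.

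Step 3, however, does not prove the lemma. Write $m=\min(j,k)$ and $M=\max(j,k)$. There are three problems.

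(i) The exponents do not close. From $|E_{\xi,\tau}|\lesssim 2^{m}(1+2^{M/2})$ you get $\|f_jf_k\|_{L^2}\lesssim 2^{m/2+M/4}\|f_j\|\|f_k\|$. Your asserted inequality $2^{m/2+M/4}\le 2^{(m+M)/3-(M-m)/12}$ is equivalent to $m\le 0$; for $j=k$ it reads $2^{3j/4}\le 2^{2j/3}$. The supremum bound cannot be improved at $|\xi|\sim 1$: choose $\tau$ so the constant term vanishes, and then all $\xi_1$ with $|\xi_1-\xi/2|\lesssim 2^{M/2}$ are admissible. So uniform counting alone gives only $X^{0,3/8}$, even for a single dyadic block.

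(ii) The length bound $(2^{M}/|\alpha\xi|)^{1/2}\le (2^{M}/|\alpha|)^{1/2}$ needs $|\xi|\ge 1$, not $|\xi|\ge 1/\sigma$. For $1/\sigma\le|\xi|<1$ it is off by a factor $\sigma^{1/2}$, which destroys uniformity in $\sigma$.

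(iii) At $\xi=0$ the cubic terms cancel identically, $\alpha\xi_1^3+\alpha(-\xi_1)^3=0$. So for $|\tau|\lesssim 2^{M}$ every $\xi_1$ is admissible and $|E_{0,\tau}|=\infty$. No counting bound exists there, contrary to your claimed length $\lesssim 2^{M/3}$.

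The missing idea is to split the output frequency at $A:=2^{M/3}$.

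For $|\xi|\ge A$, the leading coefficient satisfies $3|\alpha||\xi|\ge 3|\alpha|A$. This confines $\xi_1$ to two intervals of length $\lesssim (2^{M}/(|\alpha|A))^{1/2}=|\alpha|^{-1/2}2^{M/3}$. Hence $|E_{\xi,\tau}|\lesssim 2^{m}(2^{M/3}+\sigma^{-1})\lesssim 2^{m+M/3}$, uniformly in $\sigma$.

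For $|\xi|<A$, which contains $\xi=0$ and all small nonzero frequencies of $\mathbb{Z}_\sigma$, do not count at all. Instead use three facts:
\begin{itemize}
\item $|\mathscr{F}_x(f_jf_k)(\xi,t)|\le \|f_j(t)\|_{L^2_x}\|f_k(t)\|_{L^2_x}$;
\item the set $\{\xi\in\mathbb{Z}_\sigma:|\xi|<A\}$ has normalized measure $\lesssim A$, uniformly in $\sigma\ge 1$;
\item for the factor with the smaller index, $\|f_m\|_{L^\infty_tL^2_x}\lesssim 2^{m/2}\|f_m\|_{L^2}$, because its $\tau$-support has length $\lesssim 2^{m}$ for each $\xi$.
\end{itemize}
Together these give a bound $\lesssim A^{1/2}2^{m/2}\|f_j\|\|f_k\|$.

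Both regions then yield
\[
\|f_jf_k\|_{L^2}\lesssim 2^{m/2+M/6}\|f_j\|\|f_k\| = 2^{(j+k)/3}\,2^{-(M-m)/6}\|f_j\|\|f_k\|.
\]
Schur's test sums this to the $X^{0,1/3}$ bound with a constant depending only on $\alpha$.
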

By the Plancherel identity and duality, we handle bilinear estimates in the sense of $ L^2 $ (see e.g. \cite{Tao01}).

\begin{lemma}\label{lemma5}
Given $s$, $b$, $r$, $\{(\alpha_i, \beta_i)\}_{1 \leq i \leq 3}$ and $\sigma\geq 1$, the bilinear estimate 
\begin{equation*}
\left\| \partial_x (w_1 w_2) \right\|_{Z^{s}_{\alpha_3, \beta_3}} \le C\sigma^r\| w_1 \|_{Y^{s}_{\alpha_1, \beta_1}} \| w_2 \|_{Y^{s}_{\alpha_2, \beta_2}}, \quad \forall w_1, w_2 \in \mathscr{S}(\m{T}_\sigma\times\mathbb{R}).
\end{equation*}
holds if the following two estimates hold,
\begin{equation}\label{Duality_Equ_Form}
\int_{\Lambda} \frac{\xi_{3}\left\langle\xi_{3}\right\rangle^{s} \prod_{i=1}^{3} f_{i}\left(\xi_{i}, \tau_{i}\right)}{\left\langle\xi_{1}\right\rangle^{s}\left\langle\xi_{2}\right\rangle^{s}\left\langle L_{1}\right\rangle^{\frac{1}{2}}\left\langle L_{2}\right\rangle^{\frac{1}{2}}\left\langle L_{3}\right\rangle^{\frac{1}{2}}} \le C\sigma^r \prod_{i=1}^{3}\left\|f_{i}\right\|_{L^2}, \quad \forall f_{1}, f_{2}, f_{3} \in \mathscr{S}\left(\m{T}_\sigma\times\mathbb{R}\right),
\end{equation}
and 
	\begin{equation}
    \label{weighted l2, general, aux}
	\bigg\| \frac{1}{\la L_{3}\ra}\i_{\Lambda(\xi_{3},\tau_{3})}\frac{\xi_{2}\la \xi_{3}\ra^{s}}{\la \xi_{1}\ra^{s}\la \xi_{2}\ra^{s}}\prod_{i=1}^{2}\frac{f_{i}(\xi_{i},\tau_{i})}{\la L_{i}\ra^{\frac{1}{2}}}\bigg\|_{L^{2}_{\xi_{3}}L^{1}_{\tau_{3}}}
	\leq C\sigma^{r}\prod_{i=1}^{2}\| f_{i}\|_{L^{2}}, \quad\forall\, f_1,f_2\in\mathscr{S}\left(\m{T}_\sigma\times\mathbb{R}\right),
    \end{equation}
where
\begin{equation*}
L_{i}=\tau_{i}-\phi^{\alpha_{i}, \beta_{i}}\left(\xi_{i}\right) = \tau_i-\alpha_i\xi_i^3+\beta_{i}\xi_{i}, \quad i=1,2,3,
\end{equation*}
and
\begin{equation*}
\Lambda := \Big\{ (\vec{\xi}, \vec{\tau}) \in \mathbb{R}^6 : \sum_{i=1}^{3} \xi_i = \sum_{i=1}^{3} \tau_i = 0 \Big\},\quad \Lambda(\xi_3,\tau_3) := \left\{(\xi_1,\xi_2,\tau_1,\tau_2)\in\m{T}^2_\sigma\times \m{R}^2:  (\vec{\xi}, \vec{\tau}) \in \Lambda  \right\}.
\end{equation*} 
\end{lemma}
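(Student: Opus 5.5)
This is the standard duality reduction for bilinear estimates in Fourier restriction spaces, so the plan is to unwind the definition of the $Z^{s}_{\alpha_3,\beta_3}$ norm and handle its two pieces separately. Write $w = \partial_x(w_1w_2)$. Then
\[
\wh{w}(\xi_3,\tau_3) = i\xi_3 \int_{\xi_1+\xi_2=\xi_3,\ \tau_1+\tau_2=\tau_3} \wh{w}_1(\xi_1,\tau_1)\wh{w}_2(\xi_2,\tau_2),
\]
where the integration over $\m{Z}_\sigma\times\m{R}$ uses the normalized measure introduced above. Set
\[
f_i(\xi_i,\tau_i) := \la\xi_i\ra^{s}\la L_i\ra^{\frac12}|\wh{w}_i(\xi_i,\tau_i)|, \qquad i=1,2.
\]
By the definition (\ref{work_space}), $\|f_i\|_{L^2}\le \|w_i\|_{Y^{s}_{\alpha_i,\beta_i}}$. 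It therefore suffices to bound each of the two terms of the $Z^s_{\alpha_3,\beta_3}$ norm by $C\sigma^r\|f_1\|_{L^2}\|f_2\|_{L^2}$.

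\emph{First term.} For the $X^{s,-\frac12}_{\alpha_3,\beta_3}$ part I use $L^2$ duality. We have
\[
\|w\|_{X^{s,-\frac12}_{\alpha_3,\beta_3}} = \sup_{\|f_3\|_{L^2}\le 1}\Big|\int \la\xi_3\ra^{s}\la L_3\ra^{-\frac12}\wh{w}(\xi_3,\tau_3)\, f_3(\xi_3,\tau_3)\Big|.
\]
Substitute the convolution formula, replace every factor by its absolute value, and take $f_3\ge 0$ without loss of generality. To put the convolution in the symmetric form of $\Lambda$, reflect the third variable ($\xi_3\to-\xi_3$, $\tau_3\to-\tau_3$), correspondingly reinterpreting $L_3$ and $f_3$; this preserves $L^2$ norms and only changes the sign of $\xi_3$. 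Since $|\xi_3|$ is what actually appears, the resulting integral is exactly the left side of (\ref{Duality_Equ_Form}), with the numerator read with absolute values. Hypothesis (\ref{Duality_Equ_Form}) then gives the bound $C\sigma^r\|f_1\|\|f_2\|$. There is a small gap here: the displayed integrand carries $\xi_3$ rather than $|\xi_3|$. I would handle it by applying the hypothesis to nonnegative $f_i$ and noting that the estimate is meant in the absolute-value sense; equivalently, split into $\xi_3>0$ and $\xi_3<0$ by restricting $f_3$ to each half.

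\emph{Second term.} For the $L^2_{\xi}L^1_{\tau}$ part no duality is needed; I plug in directly:
\[
\big\|\la\xi_3\ra^{s}\la L_3\ra^{-1}\wh{w}\big\|_{L^2_{\xi_3}L^1_{\tau_3}} \le \Big\|\frac{1}{\la L_3\ra}\int_{\Lambda(\xi_3,\tau_3)} \frac{|\xi_3|\la\xi_3\ra^{s}}{\la\xi_1\ra^{s}\la\xi_2\ra^{s}}\prod_{i=1}^2 \frac{f_i}{\la L_i\ra^{\frac12}}\Big\|_{L^2_{\xi_3}L^1_{\tau_3}}.
\]
The hypothesis (\ref{weighted l2, general, aux}) is written with the factor $\xi_2$ in the numerator, whereas the natural factor here is $\xi_3$. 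I reconcile this using the symmetry of the convolution constraint: after the reflection/relabeling used for $\Lambda$, the output frequency plays the role written as $\xi_2$ in the hypothesis. Alternatively, since the hypothesis is stated for all Schwartz $f_1,f_2$, the labels can be permuted, and the factor is understood as the output frequency. Either way, (\ref{weighted l2, general, aux}) yields the bound.

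Summing the two pieces gives the claimed bilinear estimate. The $\sigma$-normalization of the measure on $\m{Z}_\sigma$ is identical on both sides, so the factor $\sigma^r$ passes through unchanged. The only real obstacle is bookkeeping: matching variables and signs between the convolution and the symmetric set $\Lambda$, and making sure each hypothesis is applied with nonnegative functions so that absolute values are harmless.
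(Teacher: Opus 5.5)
Your route matches the standard Plancherel/duality reduction that the paper invokes without proof; it only points to \cite{Tao01}. You set $f_i=\langle\xi_i\rangle^{s}\langle L_i\rangle^{\frac12}|\widehat{w}_i|$, dualize the $X^{s,-\frac12}_{\alpha_3,\beta_3}$ part against $f_3$, and substitute the convolution directly into the $L^2_\xi L^1_\tau$ part. Your first half is sound, with two small repairs:
\begin{itemize}
\item Restricting $f_3$ to $\{\xi_3<0\}$ does not by itself let you use a one-sided hypothesis whose integrand carries $\xi_3$. Apply (\ref{Duality_Equ_Form}) to the reflected triple $f_i(-\xi_i,-\tau_i)$, $i=1,2,3$. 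This is legitimate because $\Lambda$ and all weights $\langle\xi_i\rangle$, $\langle L_i\rangle$ are invariant under $(\vec\xi,\vec\tau)\mapsto-(\vec\xi,\vec\tau)$ (the phases $\alpha_i\xi^3-\beta_i\xi$ are odd), while $\xi_3$ changes sign.
\item Your $f_i$ are nonnegative $L^2$ functions, not Schwartz functions, so a routine density/Fatou step is needed before invoking the hypotheses.
\end{itemize}

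The step that fails as written is your reconciliation of the factor $\xi_2$ in (\ref{weighted l2, general, aux}) with the output frequency. No relabeling achieves this. In that hypothesis $\xi_3$ is the distinguished variable: it is the variable of the outer $L^2_{\xi_3}L^1_{\tau_3}$ norm and it carries $\langle\xi_3\rangle^{s}$ and $\langle L_3\rangle^{-1}$. Renaming the input functions can at best trade $\xi_2$ for $\xi_1$, and the reflection only flips signs.

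Read literally, the $\xi_2$-weighted hypothesis gives no control where $|\xi_2|\ll|\xi_3|$. It is vacuous for $f_2$ supported on $\{\xi_2=0\}$, and sign changes of $\xi_2$ inside the integral over $\Lambda(\xi_3,\tau_3)$ allow cancellation. Yet the quantity you must bound carries $|\xi_3|$ in exactly that region. Nor can (\ref{Duality_Equ_Form}) fill the gap: passing from $L^2_\tau$ with $\langle L_3\rangle^{-\frac12}$ to $L^1_\tau$ with $\langle L_3\rangle^{-1}$ by Cauchy--Schwarz costs the divergent factor $\|\langle L_3\rangle^{-\frac12}\|_{L^2_{\tau_3}}$. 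That is precisely why the paper proves the $L^2_\xi L^1_\tau$ part separately.

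The correct fix is to state explicitly that $\xi_2$ is a misprint for $\xi_3$. This is consistent with (\ref{Duality_Equ_Form}) and with the symbol of $\partial_x(w_1w_2)$. It also matches how the paper actually verifies this part in (\ref{Part21neq3n}) and (\ref{Part23}), where the weight is $\xi\langle\xi\rangle^{s}\langle L\rangle^{-1}$ with $\xi$ the output frequency. With that correction, your direct substitution completes the proof, together with the reflection $(\xi_3,\tau_3)\mapsto-(\xi_3,\tau_3)$, which leaves the $L^2_{\xi_3}L^1_{\tau_3}$ norm and $\langle L_3\rangle$ unchanged.
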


\section{Proof of Theorem \ref{thm1}}
\label{Sec, pf_thm1}
In this section, we first present the crucial bilinear estimate for proving Theorem \ref{thm1}, and then give a brief proof of Theorem \ref{thm1} by applying the standard contraction mapping method, which establishes the analytically local well-posedness of the system \eqref{mMajdaBiello}. After that, we complete the proof by justifying the crucial bilinear estimates. Since Theorem \ref{thm1} deals with the system \eqref{mMajdaBiello} which is equivalent to (\ref{mMajdaBielloScaling}) after scaling, the following bilinear estimate in Proposition \ref{Prop1} is presented based on the scaled system (\ref{mMajdaBielloScaling}).

\begin{proposition}\label{Prop1} 
Let $\b\in\m{R}$ and $\sigma\geq 1$. Define $s^{*}(4, \b)$ as 
\begin{equation}\label{Prop1_Index}
s^{*}(4, \beta)=\left\{\begin{array}{cl}
	1, & \quad \beta = 3n^2, n\in \mathbb{N}, \\
	1/2, &\quad others.
\end{array}\right.
\end{equation}
Then for any $s \ge s^*(4,\beta)$, the bilinear estimate (\ref{Bilinear1}) holds for any $w_1$ and $w_2$ in the Schwartz space $\mathscr{S}(\mathbb{T}_\sigma \times \mathbb{R})$.
\begin{equation}\label{Bilinear1}
\|\partial_x(w_1w_2)\|_{Z^s_\sigma} \le C_3\sigma\|w_1\|_{Y^s_{4,\beta_{\sigma}}}\|w_2\|_{Y^s_{4,\beta _{\sigma}}}, 
\end{equation}
where $C_3 = C_3(s,\beta)$.
Moreover, by further assuming the mean zero condition for $w_1$, that is, $\mathscr{F}_{x}{w_1}(0, t) = 0$ for any $t$, then the bilinear estimate (\ref{Bilinear2}) also holds for any $s \ge s^*(4,\beta)$.
\begin{equation}\label{Bilinear2}
\|\partial_x(w_1w_2)\|_{Z^s_{4,\beta_{\sigma}}} \le C_3\sigma\|w_1\|_{Y^s_\sigma}\|w_2\|_{Y^s_{4,\beta_{\sigma}}},
\end{equation}
where $C_3 = C_3(s,\beta)$.
\end{proposition}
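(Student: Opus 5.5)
We will prove both estimates through Lemma \ref{lemma5}. This reduces \eqref{Bilinear1} and \eqref{Bilinear2} to the weighted $L^2$ estimates \eqref{Duality_Equ_Form} and \eqref{weighted l2, general, aux}, with $r=1$ and the appropriate phase functions.

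\textbf{Resonance functions.} For \eqref{Bilinear1} the output phase is $\xi^3$ and both inputs carry $4\xi_i^3-\beta_\sigma\xi_i$. On $\Lambda$ the resonance function is therefore
\[
L_1+L_2+L_3 = -\big(\xi_3^3 - 4\xi_1^3+\beta_\sigma\xi_1 - 4\xi_2^3+\beta_\sigma\xi_2\big).
\]
Up to sign and the usual relabeling, this equals $3\xi\big[(2\xi_1-\xi)^2-\beta_\sigma/3\big]$ with $\xi=-\xi_3$, as in \eqref{H_res_fn_cpt}. For \eqref{Bilinear2}, fixing $\xi_1$ gives the analogous factorization from \eqref{H_tilde_root} with $c_1=c_2=\frac12$. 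The mean-zero assumption on $w_1$ removes $\xi_1=0$, which is the degenerate factor. Hence $\max_i|L_i|\gtrsim |\xi|\,|(2\xi_1-\xi)^2-\beta_\sigma/3|$, and after rescaling frequencies by $\sigma$, $\sigma\xi$ and $\sigma\xi_1$ are integers.

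\textbf{Case $\beta\ne 3n^2$.} Write $k=\sigma(2\xi_1-\xi)\in\mathbb Z$. Then $\sigma^2[(2\xi_1-\xi)^2-\beta_\sigma/3]=k^2-\beta/3$. Since $\beta/3$ is not a perfect square, $|k^2-\beta/3|\ge c(\beta)>0$, and it is comparable to $k^2$ for $|k|$ large. We will then split into cases.
\begin{itemize}
\item If $|k|\gtrsim\sigma|\xi|$, the standard cubic-type lower bound $\max|L_i|\gtrsim|\xi||\xi_1||\xi_2|$ holds. Here the arguments of Kenig--Ponce--Vega/Oh via Lemmas \ref{lemma3} and \ref{lemma4} give the estimate even for $s$ near $0$.
\item In the near-resonant region, $|2\xi_1-\xi|$ is small, so $\xi_1\approx\xi_2\approx\xi/2$ and all frequencies are comparable to $N$. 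There we only have $\max|L_i|\gtrsim N\,\sigma^{-2}\max(c(\beta),k^2)$. The derivative weight is $|\xi_3|\langle\xi_3\rangle^s/(\langle\xi_1\rangle^s\langle\xi_2\rangle^s)\sim N^{1-s}$. Placing $\langle L_{\max}\rangle^{1/2}\gtrsim N^{1/2}\sigma^{-1}$ against it leaves $N^{1/2-s}\sigma$, which is bounded when $s\ge\frac12$. The remaining two factors we handle by Cauchy--Schwarz in $\tau$ together with the $L^4$ bounds of Lemmas \ref{lemma3} and \ref{lemma4}.
\item The near-resonant region has only boundedly many values of $k$ per dyadic gain, and the weight $k^2$ gives summability. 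We therefore dyadically decompose in $|k|$ and sum.
\end{itemize}

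\textbf{Case $\beta=3n^2$.} The factor vanishes on the lines $k=\pm n$. There we fall back on the trivial bound, where the weight is absorbed using $s\ge 1$: it is $N^{1-s}\le 1$, and we close by Lemma \ref{lemma3} with $L^4_tL^2_x$ norms, as in Oh's $\beta=0$ argument.

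\textbf{Auxiliary norm.} For the estimate \eqref{weighted l2, general, aux}, we follow the same case division. When $L_3$ is not maximal we have $\langle L_3\rangle^{-1}\le\langle L_3\rangle^{-1/2-}\langle L_{\max}\rangle^{-1/2+}$, and Cauchy--Schwarz in $\tau_3$ reduces the estimate to the previous one. When $L_3$ is maximal we use $\langle L_3\rangle^{-1}$ directly and integrate in $\tau_3$ with the gain from the resonance bound.

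\textbf{Main obstacle.} The hard part is the near-resonant region at the endpoint $s=\frac12$. The gain there is only $N^{1/2}$ with a $\sigma$-dependent constant, and one must verify that the $\sigma$ loss is exactly $\sigma^1$. One must also control the $L^1_\tau$ part without a logarithmic loss. I expect to need a counting argument: for fixed $\xi$, the number of $\xi_1$ in a given $k$-shell is $O(1)$. This replaces the $L^4$ Strichartz bound and avoids a $\log$ loss in that region.
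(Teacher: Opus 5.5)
Your treatment of the $X^{s,-\frac12}$ part follows the paper. The uniform bound $|k^2-\beta/3|\gtrsim_\beta \max\{1,k^2\}$ for $k=\sigma(2\xi_1-\xi)\in\mathbb{Z}$ (the paper phrases it through the distance $\theta(\sqrt{\beta/3})$ to the integers) gives $|H|\gtrsim|\xi|/\sigma^2$ near the resonance. Hence the weight divided by $\langle L_{\max}\rangle^{1/2}$ is $\lesssim\sigma N^{\frac12-s}$, and a single application of Lemma \ref{lemma4} closes; your extra dyadic decomposition in $|k|$ is unnecessary.

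The routes differ in two places.

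For $\beta=3n^2$, the paper does not isolate the lines $k=\pm n$. It uses $s\ge 1$ on the whole near-resonant region (weight $\lesssim 1$) and again Lemma \ref{lemma4}. Your variant also works: the diagonal $L^4_tL^2_x$ argument of Lemma \ref{lemma3} on the lines, where each $\xi$ has at most two partners $\xi_1$, and the $s\ge\frac12$ argument off them, where $|k^2-n^2|\ge 1$.

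The more substantial difference is the $L^2_\xi L^1_\tau$ part when $\langle L_3\rangle$ is maximal.
\begin{itemize}
\item The paper splits according to whether $\langle L_1\rangle$ or $\langle L_2\rangle$ exceeds $\frac12\langle L_3\rangle^{6\varepsilon}$; if so, it trades the powers $\frac12\to\frac13$.
\item Otherwise it confines $L_3$ to an $O(|H|^{10\varepsilon})$-neighbourhood $\Omega(\xi)$ of the values $-H^{\xi}(\xi_1)$. The measure of $\Omega(\xi)$ in $\{|\eta|\sim M\}$ is $\lesssim\sigma M^{2/3}$, so $\|\langle L_3\rangle^{-1/2}\chi_{\Omega(\xi)}\|_{L^2_\tau}$ is finite uniformly in $\xi$. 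The other $\langle L_3\rangle^{-1/2}$ is paired with the already proved $L^2$ estimate.
\end{itemize}

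Your ``use $\langle L_3\rangle^{-1}$ directly plus counting'' also works and is more elementary, once made precise:
\begin{itemize}
\item If $L_3$ is maximal, then $\langle L_3\rangle^{-1}\lesssim\langle H\rangle^{-1+2\delta}\langle L_1\rangle^{-\delta}\langle L_2\rangle^{-\delta}$.
\item The $\tau_1,\tau_2$ integrals are then bounded by $\|f_1(\xi_1,\cdot)\|_{L^2_\tau}\|f_2(\xi_2,\cdot)\|_{L^2_\tau}$.
\item Cauchy--Schwarz in $\xi_1$ reduces matters to $\sup_{\xi}\frac1\sigma\sum_{\xi_1\in\mathbb{Z}/\sigma} w^2\langle H\rangle^{-2+4\delta}\lesssim\sigma^2$, where $w$ is the derivative weight.
\item This follows from $|H|\gtrsim_\beta|\xi|\max\{\sigma^{-2},(\xi_1-\xi_2)^2\}$, i.e.\ the $k^{-4}$ summability you allude to. It needs no Strichartz estimate and produces no logarithm.
\end{itemize}
The paper's version has the advantage of recycling the $L^2$ bound verbatim.

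Two slips should be fixed.

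First, in the non-resonant region you invoke $\max|L_i|\gtrsim|\xi||\xi_1||\xi_2|$ and cite Kenig--Ponce--Vega/Oh. For \eqref{Bilinear1} neither function has mean zero, and at $\xi_1=0$ this bound is vacuous while the weight is $|\xi|$. Use your own factorization instead. If $|2\xi_1-\xi|\ge|\xi|/10$, then $|\xi_1-\xi_2|\gtrsim\max\{|\xi_1|,|\xi_2|\}$, so $|H|\gtrsim|\xi|\max\{|\xi_1|,|\xi_2|\}^2$; in particular $|H|\gtrsim|\xi|^3$ at $\xi_1=0$. This is what the paper uses in its Region (2).

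Second, in the auxiliary part the inequality $\langle L_3\rangle^{-1}\le\langle L_3\rangle^{-1/2-}\langle L_{\max}\rangle^{-1/2+}$ goes the wrong way when $\langle L_3\rangle<\langle L_{\max}\rangle$, since the exponent $-\frac12+$ is negative. What is true and sufficient:
\begin{itemize}
\item apply Cauchy--Schwarz in $\tau_3$ against $\langle L_3\rangle^{-1/2-\varepsilon}\in L^2_{\tau_3}$;
\item keep the leftover $\langle L_3\rangle^{-1/2+\varepsilon}$ on the $L_3$ factor;
\item spend the full $\langle L_{\max}\rangle^{1/2}$ on the weight;
\item use that Lemma \ref{lemma4} needs only the exponent $\frac13<\frac12-\varepsilon$.
\end{itemize}
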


\subsection{Contraction Mapping Argument}\label{Sec, CM}

The proof of the well-posedness result via the contraction mapping principle is standard; see \cite{KPV96SahrpIndex,Iterm03}, so we will only sketch the main steps in the proof of Theorem \ref{thm1}. In addition, the proofs for the case when $\beta = 3n^2$ are very similar to that when $\beta\neq 3n^2$, with the only difference being the use of different cases in (\ref{Prop1_Index}) in Proposition \ref{Prop1}. Hence, we will present only the proof of Theorem \ref{thm1} for the case $\beta \neq 3n^2, n\in \mathbb{N}$.

For $\beta \neq 3n^2$ and $\sigma \geq 1$, we study the LWP of (\ref{mMajdaBielloScaling}) in $H^{s}_{0}(\mathbb{T})\times H^{s}(\mathbb{T})$ for any $s\geq \frac{1}{2}$. For convenience, we denote 
\[
    r = \|(u^\sigma_0,v^\sigma_0)\|_{H^s_0(\mathbb{T}_\m{\sigma})\times H^s(\mathbb{T}_\m{\sigma})}.
\]
Then by direct computation, we have
\be\label{r_small} 
    r \leq \sigma^{-\frac32} \|(u_0, v_0)\|_{H^s_0(\mathbb{T})\times H^s(\mathbb{T})},
\ee
so $r$ can be arbitrarily small as long as $\sigma$ is chosen to be sufficiently large.

Define an operator $\Gamma$ on $Y_\sigma^{s}\times Y^s_{4,\beta_\sigma}$ to be 
$\Gamma(u^\sigma,v^\sigma) = (\Gamma_1(u^\sigma,v^\sigma),\Gamma_2(u^\sigma,v^\sigma))$, where
\begin{equation*}
	\left\{
	\begin{aligned}
		&\Gamma_1(u^\sigma,v^\sigma) = \eta(t)S(t)u^\sigma_0 - 2\eta(t)\int_0^t S(t-t')v^\sigma v^\sigma_x(t')\dd t', \\
		&\Gamma_2(u^\sigma,v^\sigma) = \eta(t)S_{4,\beta_\sigma}(t)v^\sigma_0 - \eta(t)\int_0^t S_{4,\beta_\sigma}(t-t')(u^\sigma v^\sigma)_x(t')\dd t'.
	\end{aligned}\right.
\end{equation*}
From Lemma \ref{lemma1}, we have
\begin{equation}\label{Tempcon}
\|\Gamma(u^{\sigma},v^{\sigma})\|_{Y_\sigma^{s}\times Y^{s}_{4,\beta_\sigma}}\le 2C_1r+3C_2\left(\|\partial_x(v^{\sigma} v^{\sigma})\|_{Z^s(\mathbb{T}_\sigma \times \mathbb{R})}+\|\partial_x(u^\sigma v^\sigma)\|_{Z^s_{4,\beta_{\sigma}}(\mathbb{T}_\sigma \times \mathbb{R})}\right),
\end{equation}
where $C_1$ comes from \eqref{First_Linear_Estimate} and $C_2$ comes from \eqref{Duhamel_Estimate}. Then, using the second case in (\ref{Prop1_Index}) in Proposition \ref{Prop1}, we deduce that
\[
\text{LHS of } \eqref{Tempcon} \le 2C_1r+3C_2C_3\sigma\|(u^{\sigma},v^\sigma)\|_{Y_\sigma^s\times Y^{s}_{4,\beta_\sigma}}^2,
\]
where $C_3$ is from Proposition \ref{Prop1}. 

Consider the ball
\[
B(3C_1r) = \left\{ (u^{\sigma},v^\sigma)\in Y_\sigma^s\times Y^{s}_{4,\beta_\sigma} : \|(u^{\sigma},v^\sigma)\|_{Y_\sigma^s\times Y^{s}_{4,\beta_\sigma}} \leq 3C_1r \right\}.
\]
Thanks to (\ref{r_small}), there exists a constant $\sigma_0$ such that for any $\sigma\geq \sigma_0$, 
\be\label{small_sigr}
    27C_1C_2C_3\sigma r\le 1,
\ee
which implies that 
\begin{equation*}
\|\Gamma(u^{\sigma},v^{\sigma})\|_{Y_\sigma^{s}\times Y^{s}_{4,\beta_\sigma}}\le 2C_1r+3C_2C_3\sigma(3C_1r)^2 = 2C_1r+27C_1^2C_2C_3r^2\le 3C_1r.
\end{equation*}
This guarantees $\Gamma(B(3C_1r))\subseteq B(3C_1r)$.

Meanwhile, for any $(u^\sigma,v^\sigma), (\widetilde{u}^\sigma,\widetilde{v}^\sigma) \in B(3C_1r)$, it is readily seen that
\[
\|\Gamma(u^\sigma,v^\sigma) - \Gamma(\widetilde{u}^\sigma,\widetilde{v}^\sigma)\|_{Y_\sigma^s\times Y^{s}_{4,\beta_\sigma}}\leq 18C_1 C_2 C_3\sigma r\|(u^\sigma,v^\sigma) - (\widetilde{u}^\sigma,\widetilde{v}^\sigma)\|_{Y_\sigma^s\times Y^{s}_{4,\beta_\sigma}}.
\]
Thanks to the estimate (\ref{small_sigr}), we verify that $\Gamma$ is a contraction mapping, which implies the analytically local well-posedness of the system (\ref{mMajdaBielloScaling}) when $\sigma$ is sufficiently large. Finally, by scaling back to the system (\ref{mMajdaBiello}), we proved Theorem \ref{thm1}.

\subsection{Proof of Proposition \ref{Prop1}}
\label{Sec, pf_bilin_dres}

The proofs of (\ref{Bilinear1}) and (\ref{Bilinear2}) in Proposition \ref{Prop1} are very similar, so we will only provide details for (\ref{Bilinear1}) and then  illustrate why (\ref{Bilinear2}) needs the extra assumption that $w_1$ has zero mean.

For (\ref{Bilinear1}), by the definition of the $Z^{s}_\sigma$ space in (\ref{auxillaryspace}), the bilinear estimate \eqref{Bilinear1} is decomposed into the following two estimates:
\begin{eqnarray}
\|\partial_x(w_1w_2)\|_{X_\sigma^{s, -\frac{1}{2}}} \le C_3\sigma\|w_1\|_{Y^s_{4,\beta_{\sigma}}}\|w_2\|_{Y^s_{4,\beta _{\sigma}}}, \label{Bilinear1, p1} \\
\|\langle\xi\rangle^s\langle\tau-\alpha\xi^3+\beta_\sigma\xi\rangle^{-1}\widehat{\p_x(w_1 w_2)}(\xi,\tau)\|_{L^2_\xi(\mathbb{Z_\sigma})L^1_\tau(\mathbb{R})}
\le C_3\sigma\|w_1\|_{Y^s_{4,\beta_{\sigma}}}\|w_2\|_{Y^s_{4,\beta _{\sigma}}}. \label{Bilinear1, p2}
\end{eqnarray}
Next, we will first prove (\ref{Bilinear1, p1}) and then justify (\ref{Bilinear1, p2}) by taking advantage of the established (\ref{Bilinear1, p1}).

For (\ref{Bilinear1, p1}), since the $X^{s,\frac12}_{4, \b_\sigma}$ norm is smaller than the $Y^{s}_{4, \b_\sigma}$ norm, it suffices to show that
\begin{equation}\label{BilinearFirstPart}
\|\partial_x(w_1w_2)\|_{X^{s,-\frac{1}{2}}_\sigma} \lesssim \sigma\|w_1\|_{X_{4,\beta_{\sigma}}^{s,\frac{1}{2}}}\|w_2\|_{X_{4,\beta_{\sigma}}^{s,\frac{1}{2}}}.
\end{equation}
Define the bilinear operator:
\begin{equation}\label{Bilinear_Operator}
\mathscr{B}_{s}(f_1,f_2)(\xi,\tau) = \frac{\xi \langle\xi\rangle^s}{\langle L\rangle^{1/2}}\iint\limits_{\substack{\xi_1+\xi_{2} = \xi \\ \tau_1+\tau_2 = \tau}}\frac{f_1(\xi_{1},\tau_1)f_2(\xi_{2},\tau_2)}{\langle\xi_1\rangle^s\langle\xi_2\rangle^s\langle L_1\rangle^{1/2}\langle L_2\rangle^{1/2}}\, d\xi_1 \, d\tau_1, \quad \forall\, \xi\in \m{Z}/\sigma, \, \tau\in\m{R},
\end{equation}
where $f_{1}, f_{2} \in \mathscr{S}\left(\m{T_\sigma}\times\mathbb{R}\right)$, and
\begin{equation}\label{Spital_Time_Forms_u_3n}
L = \tau - \xi^3, \quad L_1 = \tau_1 - 4 \xi_1^3+\frac{\beta}{\sigma^2} \xi_1,\quad L_2 = \tau_2 - 4\xi_2^3+\frac{\beta}{\sigma^2} \xi_2,
\end{equation}
By the definition of $X^{s,-\frac{1}{2}}_\sigma$ and making a suitable change of variable, the inequality \eqref{BilinearFirstPart} is equivalent to:
\begin{equation}\label{BilinearL2}
\|\mathscr{B}_s(f_1,f_2)\|_{L^2_{\xi,\tau}}\lesssim \sigma\|f_1\|_{L_{\xi_{1},\tau_1}^2}\|f_2\|_{L_{\xi_{2},\tau_2}^2}.
\end{equation}
Without loss of generality, we assume that both $f_1$ and $f_2$ are non-negative. According to (\ref{Spital_Time_Forms_u_3n}) and the constraint $\tau_1 + \tau_2 = \tau$, all temporal frequency variables will disappear if we subtract $L$ from $L_1 + L_2$, that is:
\[ L_1 + L_2 - L = \Big( -4\xi_1^3 + \frac{\beta}{\sigma^2} \xi_1 \Big) + \Big( -4\xi_2^3 + \frac{\beta}{\sigma^2} \xi_2\Big) + \xi^3 := H_{\sigma}(\xi, \xi_1, \xi_2),\]
where the function $H_{\sigma}$ is called the resonance function by convention.
By substituting $\xi_2 = \xi - \xi_1$, we obtain
\[
    H_{\sigma}(\xi, \xi_1, \xi_2) = -3\xi\left[(2\xi_{1}-\xi)^2-\frac{\beta}{3\sigma^2}\right].
\]
When $\xi$ is fixed, the above expression can be regarded as a function in $\xi_1$. We denote such a function to be $H_{\sigma}^{\xi}$ which can be rewritten as below:
\begin{equation*}
H_{\sigma}^{\xi}(\xi_{1})= - 3\xi\left[h^{\xi}(\xi_1)-\frac{\beta}{3\sigma^2}\right], \quad \text{where} \quad h^{\xi}(\xi_1) := (2\xi_1-\xi)^2.
\end{equation*}
When $\beta <0$, then $h^{\xi}(\xi_{1})-\frac{\beta}{\sigma^2}>0$ for all $\xi$ and $\xi_{1}$; When $\beta>0$ and $\beta \neq 3n^2$, $h^{\xi}(\xi_{1})-\frac{\beta}{\sigma^2}$ has two roots on $\m{R}$, neither of which belongs to $\m{Z}_{\sigma}$ since $ \sqrt{\frac{\beta}{3}} \notin \mathbb{Z} $. These are the key observations in the proof for the case $\b\neq 3n^2$. For the remaining case $\b = 3n^2$, there are infinitely many pairs of $(\xi,\xi_1)\in \m{Z}/\sigma\times\m{Z}/\sigma$ such that the resonance function $H_\sigma^{\xi}(\xi_1)$ is zero, which leads to the conclusion that the critical index needs to be $1$ instead of $\frac12$. 

\subsubsection{Case 1: \texorpdfstring{$\beta \neq 3n^2$, $n\in \mathbb{N}$}{Case 1: beta not equal to 3n squared, n in N}}
\label{Sec, pos_beta_nr}

\noindent\textbf{Proof of (\ref{Bilinear1, p1}) in Case 1}

\noindent\textbf{Case 1.1: $\beta>0$ and $\beta \neq 3n^2$.}
When $\b>0$, $H_\sigma^\xi(\xi_1)$ can be factored as follows:
\be\label{res_fun_fac}
    H_\sigma^\xi(\xi_1) = -3\xi\left[(2\xi_{1}-\xi)^2-\frac{\beta}{3\sigma^2}\right] = -3\xi \left( 2\xi_1 - \xi + \frac{1}{\sigma}\sqrt{\frac{\beta}{3}} \right)\left( 2\xi_1 - \xi - \frac{1}{\sigma}\sqrt{\frac{\beta}{3}} \right).
\ee

\noindent\textbf{Region (1):} $|\xi|\le 100{\sqrt{\beta}}+1$.

For $s \ge 0$, the following estimate holds for any $(\xi_1, \xi_2, \xi)\in (\m{Z}/\sigma)^{3}$ such that $\xi = \xi_1 + \xi_2$,
\be\label{small_weight}
\frac{|\xi|\langle\xi\rangle^s}{\langle\xi_{1}\rangle^s\langle\xi_2\rangle^s\langle L\rangle^{1/2}} \lesssim \frac{|\xi|\langle\xi\rangle^s}{\langle \xi\rangle^s\langle L\rangle^{1/2}} \lesssim 1. 
\ee
Then by H\"older inequality and Lemma \ref{lemma4}, we have
\begin{equation}\label{Region1}
\begin{aligned}
\|\mathscr{B}_s(f_1,f_2)\|_{L^2_{\xi,\tau}}&\lesssim
\Big\|\iint\limits_{\substack{\xi_1+\xi_{2} = \xi \\ \tau_1+\tau_2 = \tau}}\frac{f_1(\xi_{1},\tau_1)f_2(\xi_{2},\tau_2)}{\langle L_1\rangle^{1/2}\langle L_2\rangle^{1/2}}\dd\xi_1\dd\tau_1\Big\|_{L^2_{\xi,\tau}} = \|F_1F_2\|_{L^2_{x,t}}\\
&\le \|F_1\|_{L^4_{x,t}}\|F_2\|_{L^4_{x,t}}\lesssim\|F_1\|_{X^{0,\frac{1}{3}}_{4,\beta_{\sigma}}} \|F_2\|_{X^{0,\frac{1}{3}}_{4,\beta_{\sigma}}}\le \|f_1\|_{L^2_{x,t}} \|f_2\|_{L^2_{x,t}},
\end{aligned}
\end{equation}
where 
\begin{equation}\label{inverse1}
{F_1}(x,t) =\mathscr{F}^{-1}\{ \langle L_1\rangle^{-\frac{1}{2}} {f_1}(\xi_1,\tau_1)\},\quad {F_2}(x,t) = \mathscr{F}^{-1}\{\langle L_2\rangle^{-\frac{1}{2}} {f_2}(\xi_2,\tau_2)\}.
\end{equation}

\noindent\textbf{Region (2):} $|\xi|>1$ and $\Big|2\xi_1-\xi-\frac{1}{\sigma}\sqrt{\frac{\beta}{3}}\Big|\geq \frac{1}{10}|\xi|$.

In this case, we have 
$$
\Big|2\xi_1-\xi+\frac{1}{\sigma}\sqrt{\frac{\beta}{3}}\Big|\ge \frac{1}{10}|\xi|-\frac{2}{\sigma}\sqrt{\frac{\beta}{3}}\geq \frac{1}{20}|\xi|.
$$
Let
\begin{equation}\label{MAX_1_And_Resonance_3n}
    MAX_1 := \max\{\langle L\rangle,\langle L_1\rangle,\langle L_2\rangle\}.
\end{equation}
Then it follows from the decomposition (\ref{res_fun_fac}) that
\begin{equation*}
MAX_1\ge \frac{1}{3} \langle -L+L_1+L_2\rangle\ge |H_\sigma^\xi(\xi_1)| \gtrsim |\xi|^3.
\end{equation*}

\noindent \textbf{Region (2.1).} $\langle L\rangle = MAX_1$.

In this case, 
\[\frac{|\xi|\langle\xi\rangle^s}{\langle\xi_1\rangle^s\langle \xi_2\rangle^s} \frac{1}{\langle L\rangle^\frac{1}{2}} \lesssim \frac{\langle\xi\rangle^s}{\langle\xi_1\rangle^s\langle \xi_2\rangle^s} \le 1.\] 
Then the rest computation is similar to Region (1).

\noindent \textbf{Region (2.2).} $\langle L_1\rangle= MAX_1$ or $\langle L_2\rangle= MAX_1$.

Without loss of generality, we only discuss the case $\langle L_1\rangle= MAX_1$ since the other case that $\langle L_2\rangle= MAX_1$ is similar. By Lemma \ref{lemma5}, \eqref{BilinearFirstPart} is equivalent to
\begin{equation}\label{Duality3}
\left|\int \frac{\xi\langle\xi\rangle^s g(\xi,\tau)f_1(\xi_{1},\tau_1)f_2(\xi_{2},\tau_2)}{\langle\xi_1\rangle^s\langle\xi_2\rangle^s\langle L\rangle^{1/2}\langle L_1\rangle^{1/2}\langle L_2\rangle^{1/2}}\,d\xi_1 \,d\tau_1 \,d\xi \,d\tau \right|\lesssim \sigma \|f_1\|_{L^2_{\xi_1,\tau_1}}\|f_2\|_{L^2_{\xi_2,\tau_2}}\|g\|_{L^2_{\xi,\tau}}. 
\end{equation}
Then similar to the argument in Region (2.1), we have 
$\frac{|\xi|\langle\xi\rangle^s}{\langle\xi_1\rangle^s\langle \xi_2\rangle^s} \frac{1}{\langle L_1\rangle^\frac{1}{2}} \lesssim 1$. 
Therefore, by denoting 
\be\label{inverseL2}
F_1(x,t) = \mathscr{F}^{-1}\{f_1({\xi_1,\tau_1})\}, \quad {F_2}(x, t) =\mathscr{F}^{-1}\{\langle L_2\rangle^{-\frac{1}{2}}{f_2}(\xi_2, \tau_2)\},\quad {G}(x, t) = \mathscr{F}^{-1}\{\langle L\rangle^{-\frac{1}{2}}{g}(-\xi, -\tau)\},
\ee
it follows from Lemma \ref{lemma4} that
\be\label{st_est}
\begin{split}
LHS \text{ of } \eqref{Duality3}&\lesssim 
\left|\iint F_1(x,t)F_2(x,t)G(x,t) \,dx \,dt\right| 
\le \|F_1\|_{L^2_{x,t}} \|F_2\|_{L^4_{x,t}} \|G\|_{L^4_{x,t}}\\
&		
\le  \|F_1\|_{L^2_{x,t}} \|F_2\|_{X^{0,\frac{1}{3}}_{4,\beta_{\sigma}}} \|G\|_{X_\sigma^{0,\frac{1}{3}}} \le \|f_1\|_{L^2_{\xi_1,\tau_1}}\|f_2\|_{L^2_{\xi_2,\tau_2}}\|g\|_{L^2_{\xi,\tau}}.
\end{split}
\ee

\noindent\textbf{Region (3):} $|\xi|>1$ and $\big|2\xi_1-\xi-\frac{1}{\sigma}\sqrt{\frac{\beta}{3}} \big| \le \frac{1}{10}|\xi|$.

In this domain, it turns out that
\be\label{comp_size}
    \langle\xi_1\rangle \sim \langle\xi_2\rangle \sim \langle\xi\rangle
\ee
then we divide this domain into the following cases.

\noindent\textbf{Region (3.1):} $|2\xi_1-\xi-\frac{1}{\sigma}\sqrt{\frac{\beta}{3}}|\geq \frac1\sigma$ and $|2\xi_1-\xi+\frac{1}{\sigma}\sqrt{\frac{\beta}{3}}|\geq \frac1\sigma$.

At this time, the resonance function $H_\sigma^{\xi}(\xi_{1})$ satisfies
\[|H_\sigma^{\xi}(\xi_{1})|\geq \frac{1}{\sigma^2}|\xi|,\] 
so when $s\geq \frac{1}{2}$, we take advantage of (\ref{comp_size}) to deduce that
\begin{equation*}
\frac{|\xi|\langle\xi\rangle^s}{\langle\xi_1\rangle^s\langle\xi_2\rangle^s}\frac{1}{ {MAX_1}^{\frac{1}{2}}}\lesssim \sigma\frac{|\xi|^{\frac{1}{2}}}{\langle \xi\rangle^{\frac{1}{2}}}\le \sigma.
\end{equation*}
Then, similar to the argument for Region (2) in the previous discussion, dividing Region (3.1) into three subregions depending on whether $\la L\ra$, $\la L_1\ra$ or $\la L_2\ra$ attains $MAX_1$, we can verify the desired estimate.

\noindent\textbf{Region (3.2):} $\big|2\xi_1-\xi-\frac{1}{\sigma}\sqrt{\frac{\beta}{3}}\big|\le \frac1\sigma$ or $\big|2\xi_1-\xi+\frac{1}{\sigma}\sqrt{\frac{\beta}{3}}\big|\le \frac1\sigma$.

The resonance function can be rewritten as
\begin{equation*}
H_\sigma^{\xi}(\xi_1) = -\frac{3\xi}{\sigma^2}\left(2\widetilde{\xi}_1-\widetilde\xi-\sqrt{\frac{\beta}{3}}\right)\left(2\widetilde{\xi}_1-\widetilde\xi+\sqrt{\frac{\beta}{3}}\right),
\end{equation*}
where $\widetilde\xi_1 = \sigma\xi_{1},\widetilde{\xi}=\sigma\xi\in\mathbb{Z}$. Since $\beta \neq 3n^2$, which implies that $\sqrt{\frac{\beta}{3}}\notin \mathbb{Z}$, we have
\[ \theta\Big(\sqrt{\frac{\beta}{3}}\Big) \gs 1, \]
where $\theta(x)$ denotes the distance between $x$ and its nearest integer.
Hence,
\begin{equation*}
|H_\sigma^{\xi}({\xi_{1}})|\gtrsim \theta^2\Big(\sqrt{\frac{\beta}{3}}\Big) \frac{|\xi|}{\sigma^2}\gtrsim \frac{|\xi|}{\sigma^2}.
\end{equation*} 
The rest of the argument is the same as that for Region (3.1).

\bigskip
\noindent\textbf{Case 1.2: $\beta<0$.}

The proof for this case is basically similar to that for Case 1.1. We still divide the proof into three sub-cases.

\noindent\textbf{Region (1):} $|\xi|\le 1$.

In this case, it is easy to get that $\frac{|\xi|\langle \xi\rangle^s}{\langle \xi_1\rangle^s\langle\xi_2\rangle^s}\le 1$. Then the rest proof is the same as that for Region (1) in Case 1.1. We emphasize that the bound in this region is an absolute constant $1$ while the bound in Region (1) in Case 1.1 depends on $\b$.

\noindent\textbf{Region (2):} $|\xi|> 1$ and $|2\xi_1-\xi|\geq \frac{1}{10}|\xi|$.

Since $\beta<0$, it holds that 
\[
|H^{\xi}_\sigma(\xi_1)| = 3|\xi| \left[h^{\xi}(\xi_1)-\frac{\beta}{3\sigma^2} \right]\geq 3|\xi|(2\xi_1-\xi)^2\gtrsim |\xi|^3.
\]
Thanks to the negativity of $\b$, the above lower bound is valid for any $\xi$ and $\xi_1$. Compared with the estimate in Region (2) in Case 1.1, the lower bound of $|H^{\xi}_\sigma(\xi_1)|$ there requires that $|\xi|$ is larger than $\sqrt{\b}$. This is the reason why we choose different upper bounds for $\xi$ in Region (1) for Case 1.1 and Case 1.2.

\noindent\textbf{Region (3):} $|\xi|> 1$ and $|2\xi_1-\xi|< \frac{1}{10}|\xi|$.

In this case, $\langle\xi\rangle\sim\langle\xi_1\rangle\sim\langle\xi_2\rangle$ holds, and we can also obtain the lower bound for the resonance function
\begin{equation*}
|H_\sigma^\xi(\xi_1)| = \left|-3\xi\left[(2\xi_{1}-\xi)^2-\frac{\beta}{3\sigma^2}\right]\right|\gtrsim \frac{|\beta\xi|}{\sigma^2}.
\end{equation*}
Again, here we take advantage of the negativity of $\b$ to attain an effective lower bound without any restrictions. Although we drop the term $\xi(2\xi_1 - \xi)^2$ entirely in the above estimate, this does not lose much since $|2\xi_1 - \xi|$ is very small in Region (3). Once the lower bound of $|H_\sigma^\xi(\xi_1)|$ is achieved, the rest argument is similar to that for Region (3) in Case 1.1, hence is omitted.
\bigskip

\noindent \textbf{Proof of (\ref{Bilinear1, p2}) in Case 1}

For the second part, $L_\xi^2(\mathbb{Z}/\sigma)L_\tau^1(\mathbb{R})$, we will first give the proof for the case when $\b > 0$ but $\b\neq 3n^2$. Then as we discussed in Case 1.2, the argument for the case when $\b < 0$ is similar and actually simpler, and is therefore omitted. 

Since $X_{4,\beta_{\sigma}}^{s,\frac{1}{2}}$ norm is smaller than $Y_{4,\beta_{\sigma}}^{s}$ norm, (\ref{Bilinear1, p2}) boils down to 
\[
\|\langle \xi\rangle^{s}\langle L\rangle^{-1}\mathscr{F}_x(\partial_x(w_1w_2))\|_{L_\xi^2(\mathbb{Z}/\sigma)L_\tau^1(\mathbb{R})}\lesssim \sigma\|w_1\|_{X_{\alpha,\beta_{\sigma}}^{s,\frac{1}{2}}}\|w_2\|_{X_{\alpha,\beta_{\sigma}}^{s,\frac{1}{2}}},
\]
which can be reformulated as
\begin{equation}\label{Part21neq3n}
\|\langle L\rangle^{-\frac{1}{2}}\mathscr{B}_{s}(f_1,f_2)\|_{L_\xi^2(\mathbb{Z}/\sigma)L_\tau^1(\mathbb{R})}\lesssim \sigma\|f_1\|_{L_{\xi_{1},\tau_1}^2}\|f_2\|_{L_{\xi_{2},\tau_2}^2}.
\end{equation}
This part of the proof is standard, which relies on the Cauchy–Schwarz inequality to reduce the estimate (\ref{Part21neq3n}) to the established estimate (\ref{BilinearL2}). In the following, we choose $\eps$ as a small constant in $\big(0, \frac{1}{100}\big)$.

\noindent\textbf{Region (1)}: $|\xi|\leq 100\sqrt{\beta}+1$.

Since $0<\varepsilon <\frac{1}{100}$, we have
\begin{align*}
LHS \text{ of } \eqref{Part21neq3n} 
& \le \|\langle L\rangle^{-\frac{1}{2}-\varepsilon}\|_{L^{\infty}_{\xi} L^2_\tau} 
\|\langle L\rangle^{\varepsilon}\mathscr{B}_s(f_1,f_2)(\xi,\tau)\|_{L^{2}_{\xi} L^2_\tau}
\\
&\lesssim_{\varepsilon} \|\langle L\rangle^{\varepsilon}\mathscr{B}_s(f_1,f_2)(\xi,\tau)\|_{L^2_{\xi,\tau}}.
\end{align*}
So it remains to prove 
\be\label{reduce1}
\|\langle L\rangle^{\varepsilon}\mathscr{B}_s(f_1,f_2)(\xi,\tau)\|_{L^2_{\xi,\tau}}
\lesssim \sigma\|f_1\|_{L_{\xi_{1},\tau_1}^2}\|f_2\|_{L_{\xi_{2},\tau_2}^2},
\ee
which is analogous to (\ref{BilinearL2}) with the only difference being the extra term $\la L\ra^{\eps}$ on the left hand side of (\ref{reduce1}).
When $|\xi| \leq 100\sqrt{\b}+1$, the following inequality holds \begin{equation*}
\frac{|\xi|\langle\xi\rangle^s\langle L\rangle^{\varepsilon}}{\langle\xi_{1}\rangle^s\langle\xi_2\rangle^s\langle L\rangle^{\frac{1}{2}}} \lesssim \frac{|\xi|\langle\xi\rangle^s}{\langle \xi\rangle^s\langle L\rangle^{\frac{1}{2}-\varepsilon}} \lesssim \frac{1}{\langle L \rangle^{\frac{1}{3}}}\lesssim 1.
\end{equation*}
Compared with (\ref{small_weight}), the above estimate is stronger in that the left-hand side contains an extra term $\la L\ra^{\eps}$. Therefore, by the similar argument for Region 1 in Case 1.1, we obtain the desired result (\ref{reduce1}).

\noindent\textbf{Region (2)}: $|\xi|> 100\sqrt{\beta}+1$, and $MAX_1 = \langle L_1\rangle$ or $MAX_1 = \langle L_2 \rangle$.

When $|\xi|>1$ and $MAX_1 = \langle L_1\rangle$, the proof of \eqref{reduce1} is similar to that of \eqref{Bilinear1, p1} when $MAX_1 = \langle L_1\rangle$ in Region (2.2) and Region (3). The only difference is that we need to adjust the term $\la L\ra^{\frac12}$ in \eqref{Duality3} and \eqref{inverseL2} to be $\la L\ra^{\frac12 - \eps}$ due to the extra term $\la L\ra^{\eps}$ in \eqref{reduce1}. Since $\eps<\frac{1}{100}$, then $\frac12 - \eps > \frac13$ and the estimate \eqref{st_est} is still valid, which justifies \eqref{reduce1}.

When $MAX_1 = \langle L_2\rangle$, the proof for (\ref{reduce1}) is almost the same as that for $MAX_1 = \langle L_1\rangle$, and thus omitted.

\noindent\textbf{Region (3)}: $|\xi|>1$ and $MAX_1 = \langle L \rangle$.

\noindent\textbf{Region (3.1)}: $\langle L_1\rangle \geq \frac{1}{2}\langle L\rangle^{6\varepsilon}$ or $\langle L_2\rangle \geq \frac{1}{2}\langle L\rangle^{6\varepsilon}$.

In this case, we have the following
\[
    \frac{\la L \ra^{\veps}}{\la L \ra^{1/2} \la L_1 \ra^{1/2}\la L_2 \ra^{1/2}}  \ls \frac{1}{\la L \ra^{1/2} \la L_1 \ra^{1/3}\la L_2 \ra^{1/3}}.
\]
The purpose of the above inequality is to eliminate the term $\la L \ra^{\veps}$ in the numerator by paying the price of lowering the powers of $\la L_1\ra$ and $\la L_2 \ra$ from $1/2$ to $1/3$. We point out that the power $1/3$ suffices to obtain the desired result due to Lemma \ref{lemma4}. For example, the estimate \eqref{Region1} is still valid if the terms $\langle L_1\rangle^{\frac{1}{2}}$ and $\la L_2\ra^{\frac{1}{2}}$ in \eqref{Region1} and \eqref{inverse1} are replaced with $\la L_1\ra^{\frac{1}{3}}$ and $\la L_2\ra^{\frac{1}{3}}$ respectively.

\noindent\textbf{Region (3.2)}: $\langle L_1\rangle \le \frac{1}{2}\langle L\rangle^{6\varepsilon}$ and $\langle L_2\rangle \le \frac{1}{2}\langle L\rangle^{6\varepsilon}$.

Recalling that $H_\sigma^\xi(\xi_1) = L_1+L_2-L$, hence 
\[\tau - \xi^3 = L = -H_\sigma^\xi(\xi_1) + L_1 + L_2.\]
Due to the assumption that $\langle L_1\rangle \le \frac{1}{2}\langle L\rangle^{6\varepsilon}$ and $\langle L_2\rangle \le \frac{1}{2}\langle L\rangle^{6\varepsilon}$, we know 
\begin{equation}\label{Lattice_Form_3n}
\tau - \xi^3 = -H_\sigma^\xi(\xi_1) + o((\tau - \xi^3)^{10\varepsilon}) = -H_\sigma^\xi(\xi_1)+o(|H_\sigma^\xi(\xi_1)|^{10\varepsilon}).
\end{equation}
For fixed $\xi$, let 
\[
    \Omega(\xi) = \big\{\eta \in \mathbb{R} : \eta = -H_\sigma^\xi(\xi_1)+o(|H_\sigma^\xi(\xi_1)|^{10\varepsilon}) \text{ for some } \xi_1 \in \mathbb{Z}/\sigma\big\}.
\]
Next, if the inequality:
\begin{equation}\label{Count1}
|\Omega(\xi) \cap \{| \eta | \sim M\}| \lesssim  \sigma M^{\frac{2}{3}},
\end{equation} 
holds for any $|\xi| \ge 1$, it then follows from this inequality that
\begin{equation*}
\begin{aligned}
\text{LHS of } \eqref{Part21neq3n} &\le \|\langle \tau - \xi^3\rangle^{-\frac{1}{2}}\chi_{\Omega(\xi)}(\tau-\xi^3)\|_{L^{\infty,2}_{\xi,\tau}} \|\mathscr{B}_s(f_1,f_2)\|_{L^2_{\xi,\tau}}\\
&	 
= \sup_{\xi:|\xi|> 1} \left(\int_{|\mu| < \sigma^{100}} \langle\mu\rangle^{-1} \dd\mu + \sum_{M: M\geq\sigma^{100} } \int_{|\mu|\sim M} \langle\mu\rangle^{-1}\chi_{\Omega(\xi)}(\mu)\dd\mu\right)^{\frac12}
\|\mathscr{B}_s(f_1,f_2)\|_{L^2_{\xi,\tau}}\\
&\le
\left( 100 \ln{(\sigma+1)} + \sum_{M:M\geq\sigma^{100} (\text{dyadic})} M^{-1}M^{\frac{2}{3}+\frac{1}{100}}\right)^{\frac12}
\|\mathscr{B}_s(f_1,f_2)\|_{L^2_{\xi,\tau}}\\
&\lesssim \sigma\|\mathscr{B}_s(f_1,f_2)\|_{L^2_{\xi,\tau}} = \text{RHS of } \eqref{Part21neq3n}.
\end{aligned}
\end{equation*}

Finally, we prove \eqref{Count1}. Without loss of generality, we assume that $\xi$ is positive. Fix $\xi_1$, we have
\begin{equation*}
\left|\left\{\eta \in \mathbb{R} : |\eta| \sim M, \eta = -H_\sigma^\xi(\xi_1)+o(|H_\sigma^\xi(\xi_1)|^{10\varepsilon})\right\}\right| \sim M^{10\varepsilon}. 
\end{equation*}
Now, we estimate the number of possible values of $\xi_1 \in \mathbb{Z}/\sigma$ such that
\begin{equation*}
|H_\sigma^\xi(\xi_1)+o(|H_\sigma^\xi(\xi_1)|^{10\varepsilon})| \sim M.
\end{equation*}
Let $|\xi| \sim N\geq1$ be dyadic. Since 
\[H_\sigma^{\xi}(\xi_1) = -\xi \left[h^{\xi}(\xi_1)-\frac{\beta}{\sigma^2}\right],\] 
where $h^{\xi}(\xi_{1}) = 3(-2\xi_1+\xi)^2$ for $\alpha = 4$, it holds that
\begin{equation*}
\#\{\xi_{1}\in \mathbb{Z}/\sigma: |H_\sigma^\xi(\xi_{1})|\sim M\}\le \#\left\{\xi_{1}\in \mathbb{Z}/\sigma:  \Big|h^{\xi}(\xi_1)-\frac{\beta}{\sigma^2}\Big|\sim  M\right\} = \#\left\{\widetilde\xi_{1}\in \mathbb{Z}:  \Big| h^{\widetilde\xi}(\widetilde\xi_1)-\beta\Big|\sim \sigma^2 M\right\}
\end{equation*}
where $\widetilde{\xi} = \sigma\xi, \widetilde{\xi}_1 = \sigma \xi_{1} \in \mathbb{Z}$. Choosing $\sigma\geq1$ such that $\sigma^{100}>2\beta$, then $M\geq \sigma^{100}>2\beta$ and
\begin{equation*}
\#\left\{\widetilde\xi_{1}\in \mathbb{Z}:  \Big| h^{\widetilde\xi}(\widetilde\xi_1)-\beta\Big|\sim \sigma^2 M\right\}\le \#\left\{\widetilde\xi_{1}\in \mathbb{Z}:  0\le(-2\widetilde{\xi}_1+\widetilde\xi)^2\le\sigma^2 \frac{3M}{2}\right\},
\end{equation*}
so the number of $\xi_{1}$ is at most $\sim\sigma M^{\frac{1}{2}}$. Above all, the contribution
to \eqref{Count1} is at most $\sigma M^{\frac{1}{2}+10\varepsilon}\le\sigma M^{\frac{2}{3}}$. 

Hence, we finished the proof for the bilinear estimate (\ref{Bilinear1}) in Case 1.

\noindent\textbf{Proof of the bilinear estimate \eqref{Bilinear2} in Case 1}

The proof of the bilinear estimate \eqref{Bilinear2} follows exactly the same line as that of \eqref{Bilinear1}. However, it should be noted that we require the mean zero condition on $w_1$ in this case. Recalling the resonance function \eqref{H_tilde_root} for $\alpha = 4$:
\[
 \wt{H}^{\xi_1}(\xi) := 3\xi_1 \bigg[ (2\xi -  \xi_1)^2 - \frac{\b}{3} \bigg].
\]
When $\xi_1$ is fixed, we regard $\wt{H}^{\xi_1}(\xi)$ as a function of $\xi$, which is similar to the resonance function \eqref{H_res_fn_cpt} for \eqref{Bilinear1}. Hence the proof of the second bilinear estimate \eqref{Bilinear2} are
analogous to \eqref{Bilinear1}. The major difference here is the extra singularity induced by $\xi_1$ for \eqref{Bilinear2}. To ensure the estimate is valid when $\xi_1 = 0$, the assumption that $\mathscr{F}_{x}{w_1}(0, t) = 0$ for any $t$, i.e., the mean value of $w_1$ is zero, is necessary.

\subsubsection{Case 2: \texorpdfstring{$\beta= 3n^2, n\in \mathbb{N}$}{Case 2: beta equal to 3n squared, n in N}}
\label{Sec, pos_beta_r}

For the case $\beta =3n^2, n\in \m{N}^+$, there are infinitely many points $(\xi,\xi_1)\in \m{Z}/\sigma\times\m{Z}/\sigma$ such that the resonance function $H_\sigma^{\xi}(\xi_1)$ is zero, which leads to the conclusion that the critical index needs to be $1$ instead of $\frac12$. Recalling the proof for the case where $\beta\neq 3n^2$, we observed that the proof for the bilinear estimate \eqref{Bilinear2} is entirely analogous to that for \eqref{Bilinear1}. 
Thus, we will only carry out the proof for the bilinear estimate \eqref{Bilinear1} and omit that for the bilinear estimate (\ref{Bilinear2}). 

In addition, from Case 1 in Section \ref{Sec, pos_beta_nr}, we have seen that (\ref{Bilinear1}) is split to be (\ref{Bilinear1, p1}) and (\ref{Bilinear1, p2}), and once (\ref{Bilinear1, p1}) is established, the estimate (\ref{Bilinear1, p2}) can be justified by taking advantage of (\ref{Bilinear1, p1}). Therefore, in the current Case 2, we will only verify (\ref{Bilinear1, p1}) and leave (\ref{Bilinear1, p2}) to the readers.

For (\ref{Bilinear1, p1}), recalling the discussion at the beginning of Section \ref{Sec, pf_bilin_dres}, it suffices to justify (\ref{BilinearL2}), that is to prove 
\[
    \|\mathscr{B}_s(f_1,f_2)\|_{L^2_{\xi,\tau}}\lesssim \sigma\|f_1\|_{L_{\xi_{1},\tau_1}^2}\|f_2\|_{L_{\xi_{2},\tau_2}^2}.
\]
 
\noindent \textbf{Region (1).} $|\xi| \le  100n$.

For $s \ge 0$, we have
\begin{equation*}
	\frac{|\xi|\langle\xi\rangle^s}{\langle\xi_{1}\rangle^s\langle\xi_2\rangle^s} \lesssim \frac{|\xi|\langle\xi\rangle^s}{\langle \xi\rangle^s} \le 100n,\quad \text{and}\quad
\langle \tau-\xi^3\rangle^\frac{1}{2} \ge 1.
\end{equation*}
Then by H\"older's inequality and Lemma \ref{lemma4}, it follows that 
\begin{equation*}
\begin{aligned}
\|\mathscr{B}_s(f_1,f_2)\|_{L^2_{\xi,\tau}}&\lesssim
\Big\|\iint\limits_{\substack{\xi_1+\xi_{2} = \xi \\ \tau_1+\tau_2 = \tau}}\frac{f_1(\xi_{1},\tau_1)f_2(\xi_{2},\tau_2)}{\langle L_1\rangle^{1/2}\langle L_2\rangle^{1/2}}\dd\xi_1\dd\tau_1\Big\|_{L^2_{\xi,\tau}} = \|F_1F_2\|_{L^2_{x,t}}\\
&\le \|F_1\|_{L^4_{x,t}}\|F_2\|_{L^4_{x,t}}\lesssim\|F_1\|_{X^{0,\frac{1}{3}}_{4,\beta_{\sigma}}} \|F_2\|_{X^{0,\frac{1}{3}}_{4,\beta_{\sigma}}}\le \|f_1\|_{L^2_{x,t}} \|f_2\|_{L^2_{x,t}},
\end{aligned}
\end{equation*}
where 
\begin{equation*}
{F_1}(x,t) =\mathscr{F}^{-1}\{ \langle L_1\rangle^{-\frac{1}{2}} {f_1}(\xi_1,\tau_1)\},\quad{F_2}(x,t) = \mathscr{F}^{-1}\{\langle L_2\rangle^{-\frac{1}{2}} {f_2}(\xi_2,\tau_2)\}.
\end{equation*}

\noindent \textbf{Region (2).} $|\xi|>100n$ and $|2\xi_1 - \xi-\frac{n}{\sigma}| \ge \frac{1}{10}|\xi|$.

In this case, we have 
\begin{equation*}
\left|2\xi_1-\xi+\frac{n}{\sigma} \right|\ge \frac{1}{10}|\xi|-\frac{2n}{\sigma}\geq \frac{1}{20}|\xi|,
\end{equation*}
then 
\begin{equation*}
MAX_1\ge \frac{1}{3} \langle -L+L_1+L_2\rangle\ge |H_\sigma^\xi(\xi_1)| = |3\xi|\left|2\xi_1-\xi+\frac{n}{\sigma}\right|\left|2\xi_1 - \xi-\frac{n}{\sigma}\right|\gtrsim |\xi|^3.
\end{equation*}

\noindent \textbf{Region (2.1).} $\langle L\rangle = MAX_1$.

In this case, we have $\frac{|\xi|\langle\xi\rangle^s}{\langle\xi_1\rangle^s\langle \xi_2\rangle^s} \frac{1}{\langle\tau-\xi^3\rangle^\frac{1}{2}} \lesssim \frac{\langle\xi\rangle^s}{\langle\xi_1\rangle^s\langle \xi_2\rangle^s} \le 1$ for $s \ge 0$. The rest argument is similar to that for Region (1).

\noindent \textbf{Region (2.2).} $\langle L_1\rangle= MAX_1$ or $\langle L_2\rangle= MAX_1$.

Since \eqref{BilinearL2} is symmetric with respect to 
$L_1$ and $L_2$, we only consider the case $\langle L_1\rangle= MAX_1$. By duality, \eqref{BilinearFirstPart} is equivalent to
\begin{equation}\label{Duality1}
\Bigg|\int\limits_{\substack{\xi_1+\xi_{2} +\xi = 0 \\ \tau_1+\tau_2 + \tau = 0}} \frac{\xi\langle\xi\rangle^s g(\xi,\tau)f_1(\xi_{1},\tau_1)f_2(\xi_{2},\tau_2)}{\langle\xi_1\rangle^s\langle\xi_2\rangle^s\langle L\rangle^{1/2}\langle L_1\rangle^{1/2}\langle L_2\rangle^{1/2}}\dd\xi_1\dd\tau_1\dd\xi\dd\tau \Bigg|\lesssim \sigma \|f_1\|_{L^2_{\xi_1,\tau_1}}\|f_2\|_{L^2_{\xi_2,\tau_2}}\|g\|_{L^2_{\xi,\tau}}. 
\end{equation}
In this case, similarly, we have $\frac{|\xi|\langle\xi\rangle^s}{\langle\xi_1\rangle^s\langle \xi_2\rangle^s} \frac{1}{\langle L_1\rangle^\frac{1}{2}} \lesssim 1$. Then, using H\"older inequality and Lemma \ref{lemma4}, it holds that
\begin{equation*}
\begin{aligned}
LHS \text{ of } \eqref{Duality1}&\lesssim \left|\iint F_1(x,t)F_2(x,t)G(x,t)dx dt\right| \le \|F_1\|_{L^2_{x,t}} \|F_2\|_{L^4_{x,t}} \|G\|_{L^4_{x,t}}\\
&		
\le  \|F_1\|_{L^2_{x,t}} \|F_2\|_{X^{0,\frac{1}{3}}_{4,\beta_{\sigma}}} \|G\|_{X_\sigma^{0,\frac{1}{3}}} \le  \|f_1\|_{L^2_{\xi_1,\tau_1}}\|f_2\|_{L^2_{\xi_2,\tau_2}}\|g\|_{L^2_{\xi,\tau}}.
\end{aligned}
\end{equation*}
where 
\begin{equation*}
F_1(x,t) = \mathscr{F}^{-1}\{f_1({\xi_1,\tau_1})\},\quad{G}(x, t) =\mathscr{F}^{-1}\{\langle L_2\rangle^{-\frac{1}{2}}{f_2}(\xi_2, \tau_2)\},\quad{F_2}(x, t) = \mathscr{F}^{-1}\{\langle L\rangle^{-\frac{1}{2}}{g}(-\xi, -\tau)\}.
\end{equation*}

\noindent\textbf{Region (3).} $|\xi|>100n$ and $|2\xi_1 - \xi-\frac{n}{\sigma}| \le \frac{1}{10}|\xi|$.

In this case, we have
$\langle\xi_1\rangle \sim \langle\xi_2\rangle \sim \langle\xi\rangle$.
Since $s\geq 1$,
\begin{equation*}
\frac{|\xi|\langle\xi\rangle^s}{\langle\xi_1\rangle^s\langle\xi_2\rangle^s}\sim \frac{|\xi|}{\langle \xi\rangle}\le 1.
\end{equation*}
The rest of the calculation is exactly the same as that for Region (1).

\section{Proof of Theorem \ref{thm2}}
\label{Sec, pf_thm2}

\subsection{\texorpdfstring{Minimal $\gamma$-biased type index $\nu_{\gamma}$ and $\gamma$-biased irrational measure $\mu_{\gamma}$}{Minimal gamma-biased type index nu gamma and gamma-biased irrational measure mu gamma}}

\subsubsection{\texorpdfstring{Definitions of $\nu_{\gamma}$ and $\mu_{\gamma}$}{Definitions of nu\_gamma and mu\_gamma}}
Before proceeding to the proof of Theorem \ref{thm2} and its associated bilinear estimates, we first recall the concept of the minimal $\gamma$-biased type index, which was defined in Definition \ref{Def, mbti} in the introduction.
\begin{mydef}[i.e. Definition \ref{Def, mbti}]\label{Def, mbti2}
Let $\gamma \in \mathbb{R}$. A real number $\rho$ is said to be of $\gamma$-biased type $\nu$ if there exist positive constants $K = K(\rho, \nu, \gamma)$ and $N = N(\rho,\gamma)$ such that the inequality
\be\label{def_nu}
    \left|\rho - \frac{m}{n}+\frac{\gamma}{n^2}\right| \ge \frac{K}{|n|^{2+\nu}}
\ee
holds for all $(m,n) \in \mathbb{Z}^2$ with $|n|>N$. In addition, 
\begin{equation}
    \nu_{\gamma}(\rho) := \inf\{\nu \in \mathbb{R} : \rho \text{ is of $\gamma$-biased type } \nu\} 
\end{equation} 
is called the minimal $\gamma$-biased type index of $\rho$, where the infimum is understood as $\infty$ if $\{\nu \in \mathbb{R} : \rho \text{ is of $\gamma$-biased type } \nu\}$ is empty.
\end{mydef}

Compared with Definition 1 in \cite{Oh2009} where the bias $\gamma = 0$ and (\ref{def_nu}) holds for all $(m,n)\in \m{Z}^2$ with $n\neq 0$, we impose an extra lower bound requirement $|n|>N$ in the above Definition \ref{Def, mbti2} for the reasons below.
\begin{itemize}
	\item[(1)] Firstly, when $\gamma = 0$, then for any irrational number $\rho$, the left hand side of (\ref{def_nu}) never vanishes, which makes it possible to find positive $K$ for (\ref{def_nu}) to hold for all $(m,n)\in\m{Z}^2$ with $n \neq 0$.
	
	\item[(2)] Secondly, when $\gamma\neq 0$, then for any irrational number $\rho$, it is not clear whether the left hand side of (\ref{def_nu}): $\rho - \frac{m}{n} + \frac{\gamma}{n^2}$, never vanishes or not, so (\ref{def_nu}) may not hold for any $(m,n)\in\m{Z}^2$ with $n\neq 0$. Meanwhile, once the left hand side of (\ref{def_nu}) vanishes for some $(m_{*}, n_{*})$, then there are exactly two pairs of integers: $(m_{*}, n_{*})$ and $(-m_{*}, -n_{*})$ such that the left hand side of (\ref{def_nu}) vanishes. So the lower bound requirement $|n| > N$ in (\ref{def_nu}) is introduced to exclude these two pairs of integers. With this being said, the lower bound $N$ can be taken as $|n_{*}| + 1$ which only depends on $\rho$ and $\gamma$.
\end{itemize}

Since we study both well-posedness and ill-posedness in this paper, for convenience in proofs and notations, we introduce the following companion definition, namely, the $\gamma$-biased irrational measure.
\begin{mydef}\label{Def, bim}
	Let $\gamma \in \mathbb{R}$. The $\gamma$-biased irrational measure $\mu_{\gamma}$ of a real number $\rho$ is defined as
	\begin{equation}\label{Defofirrmea}
		\mu_{\gamma}(\rho) := \sup\left\{\mu \in \mathbb{R} : 0 < \left|\rho- 	\frac{m}{n}+\frac{\gamma}{n^2}\right| < \frac{1}{|n|^\mu}  \text{ holds for infinitely many } (m,n) \in \mathbb{Z} \times \mathbb{Z}^* \right\}.
	\end{equation}
\end{mydef} 
Now we demonstrate the connection between these two indices.

\subsubsection{\texorpdfstring{Relations between $\nu_{\gamma}$ and $\mu_{\gamma}$}{Relations between nu\_gamma and mu\_gamma}}
\begin{proposition}\label{muequnuplus2}
    Let $\gamma\neq0$. Then $\mu_{\gamma}(\rho) = 2+ \nu_{\gamma}(\rho)$ for all $\rho \in\m{R}$. In particular, $\nu_{\gamma}(\rho) = 0$ and $\mu_{\gamma}(\rho) = 2$ for all $\rho \in\m{Q}$.
\end{proposition}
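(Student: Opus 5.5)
The plan is to prove the two inequalities $\mu_\gamma(\rho)\le 2+\nu_\gamma(\rho)$ and $\mu_\gamma(\rho)\ge 2+\nu_\gamma(\rho)$ directly from Definitions \ref{Def, mbti2} and \ref{Def, bim}, and then handle rational $\rho$ by a separate elementary computation. Throughout, write $E(m,n):=\rho-\frac{m}{n}+\frac{\gamma}{n^2}=\frac{\rho n^2-mn+\gamma}{n^2}$.

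For $\mu_\gamma\le 2+\nu_\gamma$, I may assume $\nu_\gamma(\rho)<\infty$. Take any $\nu>\nu_\gamma(\rho)$, so that $|E(m,n)|\ge K|n|^{-2-\nu}$ for $|n|>N$. If $\mu>2+\nu$, then $0<|E|<|n|^{-\mu}$ forces $K|n|^{-2-\nu}<|n|^{-\mu}$, which means $|n|^{\mu-2-\nu}<K^{-1}$. Hence $|n|$ is bounded. For each bounded $n$, the quantity $|E(m,n)|$ tends to infinity as $|m|\to\infty$, so only finitely many $m$ can satisfy $|E(m,n)|<1$. Therefore only finitely many pairs $(m,n)$ qualify, and so $\mu\notin$ the set defining $\mu_\gamma$. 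Letting $\nu\downarrow\nu_\gamma$ gives the inequality.

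For the reverse inequality $\mu_\gamma\ge 2+\nu_\gamma$, I argue by contraposition. Suppose $\mu>\mu_\gamma(\rho)$. Then $0<|E|<|n|^{-\mu}$ holds for only finitely many pairs. Recall from remark (2) that $E(m,n)=0$ happens for at most two pairs $\pm(m_*,n_*)$. Choose $N$ larger than all $|n|$ among these finitely many exceptional pairs and the zero pairs. Then for $|n|>N$ we have $|E|\ge |n|^{-\mu}$, so $\rho$ is of $\gamma$-biased type $\mu-2$ with $K=1$, and $\nu_\gamma\le\mu-2$. If the set defining $\mu_\gamma$ is cofinal (that is, $\mu_\gamma=\infty$), the inequality is trivial.

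For rational $\rho=p/q$, I compute $E=\frac{(pn-mq)n+\gamma q}{qn^2}$. The integer $k=pn-mq$ ranges over multiples of $\gcd(p,q)=1$, hence over all integers, subject to the congruence $k\equiv pn \pmod q$. The key observation is that the numerator $kn+\gamma q$ can be small only if $k=0$, since for $k\ne0$ and $|n|>2|\gamma| q$ it has size $\ge|n|/2$. Therefore $|E|\ge \min\bigl(\frac{|\gamma|}{n^2},\frac{1}{2q|n|}\bigr)\gtrsim |n|^{-2}$ for large $|n|$, using $\gamma\ne0$. This gives type $0$, so $\nu_\gamma\le 0$.

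For the matching bound $\nu_\gamma\ge0$ (equivalently $\mu_\gamma\ge2$), I take $n=qj$ and $m=pj$, which gives $E=\gamma/(q^2j^2)\ne0$ and $|E|\sim|n|^{-2}$. These are infinitely many pairs with $|E|<|n|^{-\mu}$ for every $\mu<2$. This shows both $\mu_\gamma\ge2$ and that no type $\nu<0$ is possible.

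The general lower bound $\nu_\gamma(\rho)\ge0$ for irrational $\rho$ is not needed for the identity itself. The main subtle point is the bookkeeping of the finite exceptional set, including zeros of $E$, and making $N$ depend only on $\rho$ and $\gamma$.
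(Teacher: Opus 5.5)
Your proposal is correct and is essentially the paper's proof. Both inequalities come directly from the definitions, with the zero set of $E$ controlled as in remark (2), and rational $\rho$ is handled by the same computation (the split into $k=0$ and $k\neq 0$, and the pairs $(pj,qj)$); your Step 1 then supplies the bound $\mu_\gamma(\rho)\le 2$ that the paper leaves to ``similarly''.

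Two small points. First, to respect $N=N(\rho,\gamma)$ in the definition, absorb the finitely many near-miss pairs into $K$, as the paper does, rather than enlarging $N$. Second, ``at most two zeros'' holds only for irrational $\rho$: for $\rho=0$, $\gamma=6$ there are eight. For rational $\rho=p/q$ every zero still satisfies $|n|\le q|\gamma|$, so the zero set is finite, and your rational case does not need Step 2 anyway.
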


\begin{proof}

We split the proof into two cases: (1) $\rho$ is rational, and (2) $\rho$ is irrational. 

{\bf Case (1): $\rho\in\m{Q}$}. 

Since $\rho\in \m{Q}$, we can write
\[
\rho = \frac{q}{p}, \quad \text{where\ }p,q\in\m{Z}\setminus\{0\}\ \text{and}\  \gcd(p,q) = 1,
\]
which implies that 
\[
     \left|\rho-\frac{m}{n}+\frac{\gamma}{n^2}\right| = \left|\frac{qn-mp}{np}+\frac{\gamma}{n^2}\right|.
\]
Hence, for any $|n| > |p| + |p\gamma|$,
\[\begin{split}
\Big|\frac{qn-mp}{np}+\frac{\gamma}{n^2}\Big|
&\geq \left\{\begin{array}{lll}
    |\frac{\gamma}{n^2}|, & \text{if} & qn-mp = 0,\\
    |\frac{1}{np}|-|\frac{\gamma}{n^2}|\geq \frac{1}{n^2}, & \text{if} & qn-mp \neq 0,
\end{array}\right. \\
&\geq \frac{1}{n^2}\min\{1, |\gamma|\},
\end{split}\]
 which implies that $\nu_{\gamma}(\rho)\le 0$. 

 If $\nu_{\gamma}(\rho) < 0$, then there exists some \(\varepsilon > 0\) such that \(\nu_{\gamma}(\rho) < -\varepsilon\), which implies there exist positive \(K(\rho, \nu, \gamma) \) and \(N(\rho, \gamma)\) such that
 \[
  \left|\rho-\frac{m}{n}+\frac{\gamma}{n^2}\right|\geq \frac{K}{|n|^{2-\varepsilon}} , \quad \text{for any } n>N.
 \]
 Meanwhile, by choosing $(m,n) = k(q,p), k\in\m{Z}^*$, we find for any $n>N$,
    \[
    0<\frac{K}{|n|^{2-\varepsilon}}\le\left|\rho-\frac{m}{n}+\frac{\gamma}{n^2}\right| = \frac{|\gamma|}{n^2}, 
    \]
which is a contradiction when $n\to \infty$. Therefore, $\nu_{\gamma}(\rho)\geq 0$. Combining with the fact that $\nu_{\gamma}(\rho)\leq 0$ we verified earlier, we conclude that $\nu(\rho) = 0$ for $\rho\in \mathbb{Q}$.

Similar to the above proof, we can also show that $\mu_\gamma(\rho) = 2$ for $\rho\in \mathbb{Q}$. Therefore, $\mu_{\gamma}(\rho) =\nu_{\gamma}(\rho)+2$ when $\rho\in\m{Q}$.

{\bf Case (2):} $\rho\in \m{R}\setminus\m{Q}$. 

Firstly, we prove $\nu_{\gamma}(\rho)+2\geq \mu_{\gamma}(\rho)$ for $\rho \in \mathbb{R}\setminus\mathbb{Q}$. According to the definition of $\mu_\gamma$, for any $\varepsilon_1 > 0$, there exist infinitely many $(m,n) \in \m{Z}\times \m{Z}^*$ such that 
\begin{equation*}
0<\left|\rho -\frac{m}{n}+\frac{\gamma}{n^2}\right|< \frac{1}{n^{\mu_{\gamma}(\rho)-\varepsilon_1}}.
\end{equation*}
So for any $\varepsilon_2 > 0$, there does not exist positive numbers $K$ and $N$ such that 
\begin{equation*}
\left|\rho-\frac{m}{n}+\frac{\gamma}{n^2}\right|\geq\frac{K}{n^{\mu_{\gamma}(\rho)-\varepsilon_1-\varepsilon_2}},
\end{equation*} 
for all $(m,n)\in\m{Z}^2$ with $n>N$. Hence, we have $\nu_{\gamma}(\rho)+2\geq \mu_{\gamma}(\rho)-\varepsilon_1-\varepsilon_2$, which implies $\nu_{\gamma}(\rho)+2\geq \mu_{\gamma}(\rho)$. 

Conversely, given $\mu_{\gamma}(\rho)$, then for any $\varepsilon > 0$, there exist at most finitely many $(m,n)\in\m{Z}\times\m{Z}^*$ such that 
\begin{equation*}
0<\left|\rho-\frac{m}{n}+\frac{\gamma}{n^2}\right|< \frac{1}{|n|^{\mu_{\gamma}(\rho)+\varepsilon}}.
\end{equation*}
Meanwhile, the equation 
\be\label{exp_eq}
    \rho-\frac{m}{n}+\frac{\gamma}{n^2} = 0,
\ee
can have at most two solutions $(m,n)\in\m{Z}\times\m{Z}^*$. 
Noting $(m,n)$ satisfies (\ref{exp_eq}) if and only if $(-m,-n)$ satisfies (\ref{exp_eq}), so it is equivalent to prove (\ref{exp_eq}) has at most one solution $(m,n)\in\m{Z}\times\m{Z}^*$ with $n>0$. In fact, if 
there exist $(m_1,n_1)$ and $(m_2, n_2)$, with positive $n_1$ and $n_2$, such that (\ref{exp_eq}) holds and $(m_1, n_1) \neq (m_2, n_2)$, then $n_1$ has to be different from $n_2$ and $n_1^2 - n_2^2 \neq 0$. Moreover, 
\begin{equation*}
\rho-\frac{m_1}{n_1}+\frac{\gamma}{n_1^2}=\rho-\frac{m_2}{n_2}+\frac{\gamma}{n_2^2}=0,
\end{equation*}
so
\begin{equation*}
\gamma = \frac{n_1n_2(m_2 n_1 - m_1 n_2)}{n_1^2-n_2^2}\in\mathbb{Q}.
\end{equation*}
As a result,
\begin{equation*}
\rho = \frac{m_1}{n_1}-\frac{\gamma}{n_1^2}\in \mathbb{Q},
\end{equation*} 
which leads to a contradiction with $\rho \in \mathbb{R}\setminus\mathbb{Q}$. Hence, we proved that given any $\rho\in\m{R}\setminus\{\m{Q}\}$ and $\gamma\in\m{R}$, the equation (\ref{exp_eq}) has at most two solutions $(m,n)$ which only depend on $\rho$ and $\gamma$. So there exists a positive integer $N = N(\rho,\gamma)$ such that (\ref{exp_eq}) does not admit a solution $(m,n)$ with $|n|>N$. 

So far, we demonstrated that for $(m,n)\in\m{Z}\times \m{Z}^*$ with $|n|>N$, except for at most finitely many $(m,n)$ such that 
\be\label{pos_small_diff}
0< \left|\rho-\frac{m}{n}+\frac{\gamma}{n^2}\right|< \frac{1}{|n|^{\mu_{\gamma}(\rho)+\varepsilon}},
\ee
all the other $(m,n)$ satisfy 
\[\left|\rho-\frac{m}{n}+\frac{\gamma}{n^2}\right| \geq \frac{1}{|n|^{\mu_{\gamma}(\rho)+\varepsilon}}.\]
So there exists a positive number $K_1 = K_1(\rho,\gamma,\mu_{\gamma}(\rho))$ such that for those finitely many $(m,n)$ that satisfy (\ref{pos_small_diff}), it holds that 
\[
\left|\rho-\frac{m}{n}+\frac{\gamma}{n^2}\right| \geq \frac{K_1}{|n|^{\mu_{\gamma}(\rho)}}.
\]
Consequently, for any $(m,n)\in\m{Z}\times \m{Z}^*$ with $|n|>N$, 
\[
\left|\rho-\frac{m}{n}+\frac{\gamma}{n^2}\right| \geq \frac{K}{|n|^{\mu_{\gamma}(\rho)+\eps}},
\]
where $K:=\min\{K_1, 1\}$.
According to Definition \ref{Def, mbti2}, this means $ \mu_{\gamma}(\rho)+\varepsilon \geq \nu_{\gamma}(\rho) +2$, which further implies that $ \mu_{\gamma}(\rho) \geq \nu_{\gamma}(\rho) +2$. 

Therefore, $ \mu_{\gamma}(\rho) = \nu_{\gamma}(\rho) +2$ for any $\rho\in\m{R}\setminus\m{Q}$ as well.
\end{proof}

\begin{remark}
We point out that Proposition \ref{muequnuplus2} is not valid when $\gamma = 0$ and $\rho\in\m{Q}$. In fact, when $\gamma = 0$, we have $\mu_0(\rho) = 1$ and $\nu_0(\rho) = \infty$ for $\rho \in \mathbb{Q}$. 
\end{remark}

\subsubsection{\texorpdfstring{Properties of $\nu_{\gamma}$ and $\mu_{\gamma}$}{Properties of nu\_gamma and mu\_gamma}}
Next, we derive some properties of $\nu_{\gamma}$ and $\mu_{\gamma}$.
\begin{proposition}\label{munu}
Let $\gamma\in\m{R}\setminus\{0\}$. Then the following properties hold.
\begin{itemize}
\item[(1)] For any $\rho\in\m{R}$ and $k\in\m{Z}$, 
$\nu_{\gamma}(\rho + k) = \nu_{\gamma}(\rho)$ and $\mu_{\gamma}(\rho + k) = \mu_{\gamma}(\rho)$.

\item[(2)] For any $\rho\in\m{R}$, 
$\nu_{\gamma}(\rho)\geq0$ and $\mu_{\gamma}(\rho)\geq 2 $.

\item[(3)] For a.e. $\rho\in\m{R}$, 
$\nu_{\gamma}(\rho)=0$ and $\mu_{\gamma}(\rho)=2$.
\end{itemize}
\end{proposition}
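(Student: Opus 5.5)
By Proposition \ref{muequnuplus2}, $\mu_\gamma = 2 + \nu_\gamma$ for $\gamma \neq 0$. So in each of (1)--(3) it suffices to prove the statement for one of the two indices; I will work with whichever is more convenient.

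\textbf{Property (1).} The key observation is that for $k\in\m{Z}$,
\[
\rho + k - \frac{m}{n} + \frac{\gamma}{n^2} = \rho - \frac{m - kn}{n} + \frac{\gamma}{n^2}.
\]
The map $(m,n)\mapsto(m-kn,n)$ is a bijection of $\m{Z}\times\m{Z}^*$ that preserves $n$. Hence the set of admissible $(K,N)$ in Definition \ref{Def, mbti2} is the same for $\rho$ and $\rho+k$. This gives $\nu_\gamma(\rho+k)=\nu_\gamma(\rho)$, and likewise $\mu_\gamma(\rho+k)=\mu_\gamma(\rho)$.

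\textbf{Property (2).} For rational $\rho$ this is already in Proposition \ref{muequnuplus2}, so take $\rho$ irrational. It suffices to show $\mu_\gamma(\rho)\ge 2$, that is, for every $\mu<2$ there are infinitely many $(m,n)$ with
\[
0<\Big|\rho-\frac{m}{n}+\frac{\gamma}{n^2}\Big|<|n|^{-\mu}.
\]
I would use Dirichlet's theorem. It gives infinitely many $n\ge1$ with some $m$ such that $|\rho - m/n|<1/n^2$, and these $n$ are unbounded because $\rho$ is irrational. For each such pair the triangle inequality yields
\[
\Big|\rho-\frac{m}{n}+\frac{\gamma}{n^2}\Big|\le\frac{1+|\gamma|}{n^2},
\]
which is $<n^{-\mu}$ once $n$ is large. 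By the proof of Proposition \ref{muequnuplus2}, the quantity vanishes for at most two pairs, so strict positivity fails only finitely often. Therefore $\mu_\gamma(\rho)\ge\mu$ for every $\mu<2$, hence $\mu_\gamma(\rho)\ge2$ and $\nu_\gamma(\rho)\ge0$.

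\textbf{Property (3).} By (1) and (2) it is enough to show that for each $\varepsilon>0$ the set
\[
E_\varepsilon=\Big\{\rho\in[0,1]:\ \Big|\rho-\frac{m}{n}+\frac{\gamma}{n^2}\Big|<|n|^{-2-\varepsilon}\ \text{for infinitely many } (m,n)\Big\}
\]
has measure zero; then one takes the union over $\varepsilon=1/j$. This is a Borel--Cantelli (Khintchine-type) argument, and it is the main step, although a routine one.

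Fix $n$. A point $\rho\in[0,1]$ can satisfy the inequality only if $m/n-\gamma/n^2$ lies within distance $1$ of $[0,1]$. That allows at most $|n|+C(\gamma)$ values of $m$. Each admissible $m$ contributes an interval of length $2|n|^{-2-\varepsilon}$ centred at the shifted rational $m/n-\gamma/n^2$. So the total measure attached to a fixed $n$ is
\[
\lesssim (|n|+C(\gamma))\,|n|^{-2-\varepsilon}\lesssim |n|^{-1-\varepsilon},
\]
and summing over $n\neq0$ gives a convergent series. Borel--Cantelli then gives $|E_\varepsilon|=0$.

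For $\rho\notin E_\varepsilon$, only finitely many pairs violate $|\rho-m/n+\gamma/n^2|\ge|n|^{-2-\varepsilon}$. Some of these may be exact zeros; by Proposition \ref{muequnuplus2} there are at most two, and they are excluded by choosing $N$ large. The remaining violating pairs are handled by shrinking $K$, exactly as in that proof. This shows $\rho$ is of $\gamma$-biased type $\varepsilon$, so $\nu_\gamma(\rho)\le\varepsilon$. Combined with (2), $\nu_\gamma(\rho)=0$ almost everywhere, and hence $\mu_\gamma(\rho)=2$ almost everywhere.
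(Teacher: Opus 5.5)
Your proposal is correct and follows the paper's proof essentially step for step. You reduce to one index via Proposition \ref{muequnuplus2}, use the shift $(m,n)\mapsto(m-kn,n)$ for (1), Dirichlet plus the triangle inequality with exclusion of the finitely many zeros for (2), and for (3) a covering/Borel--Cantelli bound with $O(|n|)$ intervals of length $2|n|^{-2-\varepsilon}$ for each $n$. The only differences are cosmetic: in (3) you verify $\nu_\gamma(\rho)\le\varepsilon$ directly rather than showing $\{\mu_\gamma>2+\varepsilon\}$ is null. Also, your count of admissible $m$ should be $O(|n|)$ (about $3|n|+1$ with your ``distance $1$'' window) rather than $|n|+C(\gamma)$, which does not affect the argument.
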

\begin{proof}

Thanks to Proposition \ref{muequnuplus2}, we only need to prove the above properties for \(\mu_\gamma\).\

\begin{itemize}
\item[(1)] This part is obvious.

\item[(2)] If $\rho\in\m{Q}$, then it has already been shown that $\nu_{\gamma}(\rho) = 0$ and $\mu_{\gamma}(\rho) = 2$. So it remains to study the case when $\rho \in \mathbb{R}\setminus\mathbb{Q}$. In this case, it is well known that $\mu_{0}(\rho) \geq 2$, see e.g. Proposition 1 on page 13 in \cite{YZ22DCDS}. So for any $\eps>0$, there exists infinitely many $(m,n)$ such that 
\be\label{ie_0}
    0 < \Big|\rho - \frac{m}{n} \Big| < \frac{1}{|n|^{2-\varepsilon}}.
\ee
For any fixed $n$, the above inequality can hold only for finitely many $m$. In addition, (\ref{ie_0}) holds for a pair $(m,n)$ if and only if it holds for the pair $(-m, -n)$. So we can find infinitely many pairs of $(m,n)$ such that (\ref{ie_0}) holds, where $n>0$ is so large that
$n^{\varepsilon}\geq 2 + |\gamma|$. As a result, 
\begin{equation*}
\Big|\rho -\frac{m}{n}+\frac{\gamma}{n^2}\Big|\le \Big|\rho -\frac{m}{n}\Big|+\Big|\frac{\gamma}{n^2}\Big| < \frac{1}{n^{2-2\varepsilon}}.
\end{equation*}  
Meanwhile, since $\rho\in\m{R}\setminus\m{Q}$, there exists at most one pair of $(m,n)$ such that $n>0$ and $\rho -\frac{m}{n}+\frac{\gamma}{n^2} = 0$. Hence, there exist infinitely many $(m,n)$ such that $n>0$ and
\[ 0 < \Big|\rho -\frac{m}{n}+\frac{\gamma}{n^2}\Big| < \frac{1}{n^{2-2\varepsilon}},\]
which implies that $\mu_{\gamma}(\rho)\geq 2-2\varepsilon$. Sending $\varepsilon \to 0^{+}$, we have $\mu_{\gamma}(\rho)\geq 2$. 

\item[(3)] Since $ \mu_{\gamma}(\rho)\geq 2$ and $\mu_{\gamma}$ is invariant under integer translation, then we only need to prove that $m(A_{\varepsilon})= 0$ for any $\eps>0$, where 
\[A_{\varepsilon} =\{\rho\in[0,1):\mu(\rho)>2+\varepsilon\}.\] 

According to the definition and the discussion in part (2), for any $\rho \in A_\varepsilon$, there exist infinitely many pairs of integers $(m,n)$ such that $n>0$ and
\begin{equation*}
\left|\rho-\frac{m}{n}+\frac{\gamma}{n^2}\right|<\frac{1}{n^{2+\varepsilon}}.
\end{equation*}
Note that the set of $n$ satisfying the above inequality has an infinite supremum, otherwise only a finite number of pairs $(m,n)$ could satisfy the above condition. So given any integer $K>|\gamma|>0$, there must exist an $n>K$ and an $m$ such that
\begin{equation*}
\rho \in I_{\gamma}(m,n)=\left(\frac{m}{n}-\frac{\gamma}{n^2}-\frac{1}{n^{2+\varepsilon}}, \, \frac{m}{n}-\frac{\gamma}{n^2}+\frac{1}{n^{2+\varepsilon}}\right).
\end{equation*}
Since $0\leq \rho < 1$, we have 
\[ \frac{m}{n}-\frac{\gamma}{n^2}-\frac{1}{n^{2+\varepsilon}} < 1, \quad \text{ and }\quad \frac{m}{n}-\frac{\gamma}{n^2}+\frac{1}{n^{2+\varepsilon}} > 0,\]
which implies that $0\leq m\leq n+1$. Hence, 
\[
\rho \in \bigcup_{n = K+1}^{\infty} \bigcup_{m=0}^{n+1} I_{\gamma}(m,n).
\]
Consequently,
\begin{equation*}
\begin{aligned}m(A_{\varepsilon})&\leq
\sum_{n = K+1}^{\infty} \sum_{m=0}^{n+1} |I_{\gamma}(m,n)| =
\sum_{n = K+1}^{\infty}\sum_{m=0}^{n+1}
\frac{2}{n^{2+\varepsilon}} \\
&\leq 4\sum_{n=K+1}^{\infty}\frac{1}{n^{1+\varepsilon}}
\le 4\int_{K}^{\infty}\frac{1}{x^{1+\varepsilon}}\dd x
=
\frac{4}{\varepsilon K^{\varepsilon}}.
\end{aligned}
\end{equation*}
Let $K \to \infty$, then $m(A_{\varepsilon}) = 0$, which completes the proof.
\end{itemize}
\end{proof}

\subsection{Key bilinear estimates}
Next, we turn to prove Theorem \ref{thm2}. Based on the contraction mapping argument in the proof of Theorem \ref{thm1} in Section \ref{Sec, CM}, it 
reduces to verifying the following two bilinear estimates.

\begin{proposition}\label{Prop2} 
Let $\a \in (0,4)\setminus\{1\}$, $\b\neq 0$ and $\sigma\geq 1$. Let $s^*(\alpha,\beta)$ be defined as in (\ref{ci_res}). Then for any $s > s^*(\alpha,\beta)$, the bilinear estimate (\ref{Bilinear3}) holds for any $w_1$ and $w_2$ in the Schwartz space $\mathscr{S}(\mathbb{T}_\sigma \times \mathbb{R})$.
\begin{equation}\label{Bilinear3}
\|\partial_x(w_1 w_2)\|_{Z_\sigma^s(\mathbb{T}_\sigma \times \mathbb{R})} \le C \sigma\|w_1\|_{Y^s_{\alpha,\beta_{\sigma}}(\mathbb{T}_\sigma \times \mathbb{R})}\|w_2\|_{Y^s_{\alpha,\beta _{\sigma}}(\mathbb{T}_\sigma \times \mathbb{R})},
\end{equation}
where $C = C(s,\a,\b)$. 
Moreover, under the additional mean-zero condition on $w_1$, i.e., $\mathscr{F}_{x}{w_1}(0, t) = 0$ for any $t$, the following bilinear estimate (\ref{Bilinear4}) also holds for any $s > s^*(\alpha,\beta)$.
\begin{equation}\label{Bilinear4}
\|\partial_x(w_1w_2)\|_{Z^s_{\alpha,\beta_{\sigma}}(\mathbb{T}_\sigma \times \mathbb{R})} \le C \sigma\|w_1\|_{Y_\sigma^s(\mathbb{T}_\sigma \times \mathbb{R})}\|w_2\|_{Y^s_{\alpha,\beta_{\sigma}}(\mathbb{T}_\sigma \times \mathbb{R})},
\end{equation}
where $C = C(s,\a,\b)$. 
In addition, when $R_{\alpha}\in\mathbb{Q}$, the ranges for $s$ in both (\ref{Bilinear3}) and (\ref{Bilinear4}) can be extended to include the endpoint, i.e. $s \geq \frac12$.
\end{proposition}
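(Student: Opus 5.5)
Following the scheme of Section \ref{Sec, pf_bilin_dres}, I would prove only (\ref{Bilinear3}) in detail. The estimate (\ref{Bilinear4}) is obtained by the same argument applied to $\wt{H}^{\xi_1}(\xi)$ in (\ref{H_tilde_root}). That factorization has the same roots $c_1,c_2$ and the same bias; its prefactor $\xi_1$ is harmless because $w_1$ has zero mean, so $\xi_1=0$ never occurs. As before, I would split (\ref{Bilinear3}) into its $X^{s,-\frac12}_\sigma$ part and its $L^2_\xi L^1_\tau$ part. It then suffices to prove $\|\mathscr{B}_s(f_1,f_2)\|_{L^2}\lesssim\sigma\|f_1\|\|f_2\|$, with $L_i=\tau_i-\a\xi_i^3+\b_\sigma\xi_i$ and $L=\tau-\xi^3$. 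The $L^1_\tau$ part follows by the Cauchy--Schwarz and counting argument of (\ref{Count1}); there we only need $\#\{\xi_1:|H^\xi_\sigma(\xi_1)|\sim M\}\lesssim\sigma M^{1/2}$, which holds because $H^\xi_\sigma$ is quadratic in $\xi_1$ with leading coefficient $\sim|\xi|$.

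\textbf{Region decomposition.} In rescaled integer variables $\wt\xi=\sigma\xi$ and $\wt\xi_1=\sigma\xi_1$,
\[
H^\xi_\sigma(\xi_1)=-3\a\sigma^{-3}\,\wt\xi\,\Big[(\wt\xi_1-c_1\wt\xi)(\wt\xi_1-c_2\wt\xi)-\tfrac{\b}{3\a}\Big].
\]
I would treat three regions.
\begin{itemize}
\item[(i)] $|\xi|\lesssim 1+\sqrt{|\b|}$: here the multiplier is bounded, and the $L^4$ Strichartz estimate (Lemma \ref{lemma4}) closes the bound exactly as in Region (1) of Case 1.1.
\item[(ii)] $\wt\xi_1$ is away from both $c_1\wt\xi$ and $c_2\wt\xi$ (distance $\ge \frac1{10}|c_1-c_2||\wt\xi|$): then $|H|\gtrsim|\xi|^3$, and the argument of Region (2) applies, with the weight absorbed by $MAX_1^{1/2}$.
\item[(iii)] $\wt\xi_1$ is near $c_j\wt\xi$, say $j=1$: then $\la\xi_1\ra\sim\la\xi_2\ra\sim\la\xi\ra$. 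Using (\ref{H_xi_decom2}) and (\ref{xi_decom}),
\[
|H|\sim\sigma^{-3}|\wt\xi|^3\Big|c_1-\tfrac{\wt\xi_1}{\wt\xi}+\tfrac{\lam}{\wt\xi^2}+O(\wt\xi^{-4})\Big|,\qquad \lam=\tfrac{\b}{\a R_\a}.
\]
\end{itemize}

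\textbf{Key step: lower bound in region (iii).} This is where Definition \ref{Def, mbti2} and the associated propositions enter. By the definition of $s_{\a,\b}$, for every $\eps>0$ we have
\[
\Big|c_1-\tfrac{m}{n}+\tfrac{\lam}{n^2}\Big|\ge K|n|^{-2-s_{\a,\b}-\eps}\quad\text{for }|n|>N.
\]
This gives $|H|\gtrsim\sigma^{-3}|\wt\xi|^{1-s_{\a,\b}-\eps}$, and hence
\[
\frac{|\xi|\la\xi\ra^s}{\la\xi_1\ra^s\la\xi_2\ra^s MAX_1^{1/2}}\lesssim \sigma^{3/2}\la\xi\ra^{1-s-\frac{1-s_{\a,\b}-\eps}{2}}.
\]
This is bounded precisely when $s>\frac{1+s_{\a,\b}}2$. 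When $s_{\a,\b}\ge1$, I would instead use the trivial bound $|\xi|\la\xi\ra^{-s}\le1$ for $s\ge1$.

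\textbf{Main obstacle: the case $R_\a\in\m Q$.} Here I need the endpoint $s=\frac12$ with no $\eps$-loss, so I would argue exactly rather than through Proposition \ref{muequnuplus2}. Write $c_1=q/p$. Then
\[
\tfrac{\wt\xi_1}{\wt\xi}-c_1-\tfrac{\lam}{\wt\xi^2}=\tfrac{(p\wt\xi_1-q\wt\xi)\wt\xi-p\lam}{p\wt\xi^2}.
\]
The numerator is either $|p\lam|$ when $p\wt\xi_1=q\wt\xi$, or at least $|\wt\xi|-|p\lam|\gtrsim|\wt\xi|$ otherwise. Since the error term is $O(\wt\xi^{-4})$, for $|\wt\xi|$ large this gives $|H|\gtrsim\sigma^{-3}|\wt\xi|$, i.e. $|H|\gtrsim_\b \sigma^{-2}|\xi|$, as in Region (3.2) of Case 1.1. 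The factor powers of $\sigma$ must be tracked carefully; the constraint $|\wt\xi|\gg1$ in (\ref{large_xi}) is arranged so that the final constant is $\lesssim\sigma$. I expect this $\sigma$-bookkeeping, together with controlling the remainder $Q_j$ uniformly in $\sigma$, to be the delicate part of the proof.
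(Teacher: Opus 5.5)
Your proposal is essentially the paper's proof. It uses the same split into small-$|\xi|$, non-resonant and near-resonant regions, the exact computation with $c_1=q/p$ giving $|H^\xi_\sigma(\xi_1)|\gtrsim\sigma^{-2}|\xi|$ when $R_\alpha\in\mathbb{Q}$ (Case 3.1), the biased-type lower bound of Case 3.3, the trivial bound when $s_{\alpha,\beta}\ge1$, and the Cauchy--Schwarz/counting argument for the $L^2_\xi L^1_\tau$ part. In that last part, your sublevel-set count for a quadratic in $\xi_1$ with leading coefficient $\sim|\xi|$ is a tidy substitute for Regions (2.2a)--(2.2c).

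Two details need repair.

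\emph{Region (iii) width.} With width $\frac1{10}|c_1-c_2||\wt\xi|$, your region (iii) contains $\xi_2=0$ as soon as $|c_2|<\frac1{10}(c_1-c_2)$, i.e.\ for $\alpha$ near $1$. There the claimed $\la\xi_1\ra\sim\la\xi_2\ra\sim\la\xi\ra$ fails, and the Diophantine lower bound alone cannot control the weight $|\xi|$ (the true resonance there is $\sim|c_2||\xi|^3$, but your argument does not see it). The fix is to shrink the width to $\ll\min\{|c_1|,|c_2|\}\,|\wt\xi|$. The paper instead uses width $\frac{1}{2\sigma}$ together with $E_{\alpha,\beta,s}\ge 1/|c_2|$, cf.\ (\ref{const_M2}).

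\emph{Power of $\sigma$ in the irrational case.} Your factor $\sigma^{3/2}$ discards the $\sigma^{-(1-s_{\alpha,\beta}-\varepsilon)/2}$ hidden in $|\wt\xi|^{-(1-s_{\alpha,\beta}-\varepsilon)/2}$. Keeping it gives $\sigma^{1+(s_{\alpha,\beta}+\varepsilon)/2}$, matching (\ref{32a}). This matters for the contraction of Section \ref{Sec, CM}: any power strictly below $\frac32$ is absorbed by the smallness $r\lesssim\sigma^{-3/2}$, whereas $\sigma^{3/2}$ itself is not.
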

The proofs of (\ref{Bilinear3}) and (\ref{Bilinear4}) are very similar, so we will only provide details for (\ref{Bilinear3}) and then illustrate why (\ref{Bilinear4}) needs the extra assumption that $w_1$ has zero mean. 

For (\ref{Bilinear3}), according to the definition of the space $Z_\sigma^s(\mathbb{T}_\sigma \times \mathbb{R})$, it is equivalent to justify the following two estimates:
\begin{eqnarray}
\|\partial_x(w_1w_2)\|_{X_\sigma^{s, -\frac{1}{2}}} \le C_1\sigma\|w_1\|_{Y^s_{\a,\beta_{\sigma}}}\|w_2\|_{Y^s_{\a,\beta _{\sigma}}}, \label{Bilinear3, p1} \\
\|\langle\xi\rangle^s\langle\tau-\alpha\xi^3+\beta_\sigma\xi\rangle^{-1}\widehat{\p_x(w_1 w_2)}(\xi,\tau)\|_{L^2_\xi(\mathbb{Z_\sigma})L^1_\tau(\mathbb{R})}
\le C_1\sigma\|w_1\|_{Y^s_{\a,\beta_{\sigma}}}\|w_2\|_{Y^s_{\a,\beta _{\sigma}}}. \label{Bilinear3, p2}
\end{eqnarray}

\subsubsection{Proof of the main part of the bilinear estimate}

\begin{proof}[Proof of \eqref{Bilinear3, p1}]
Firstly, based on the definition of the space $Y^s_{\a,\beta _{\sigma}}$ in Definition \ref{Def, FR space}, it suffices to show 
\begin{equation}\label{BilinearFirstPart2}
\|\partial_x(w_1w_2)\|_{X_\sigma^{s,-\frac{1}{2}}} \lesssim \sigma\|w_1\|_{X_{\a,\beta_{\sigma}}^{s.\frac{1}{2}}}\|w_2\|_{X_{\a,\beta_{\sigma}}^{s,\frac{1}{2}}}.
\end{equation}
Inspired by the proof for (\ref{BilinearFirstPart}), we define the following bilinear operator:
\begin{equation*}
\mathscr{B}_{s}(f_1,f_2)(\xi,\tau) = \frac{\xi \langle\xi\rangle^s}{\langle L\rangle^{1/2}}\iint\limits_{\substack{\xi_1+\xi_{2} = \xi \\ \tau_1+\tau_2 = \tau}}\frac{f_1(\xi_{1},\tau_1)f_2(\xi_{2},\tau_2)}{\langle\xi_1\rangle^s\langle\xi_2\rangle^s\langle L_1\rangle^{1/2}\langle L_2\rangle^{1/2}}\dd \xi_1 \dd\tau_1, \quad \forall\, \xi\in \m{Z}/\sigma, \, \tau\in\m{R},
\end{equation*}
where
\be\label{L-res}
L = \tau - \xi^3,\quad L_1 = \tau_1 - \alpha \xi_1^3+\frac{\beta}{\sigma^2} \xi_1,\quad L_2 = \tau_2 - \alpha\xi_2^3+\frac{\beta}{\sigma^2} \xi_2.
\ee
Thus, (\ref{BilinearFirstPart2}) is converted to be
\begin{equation}
\|\mathscr{B}_s(f_1,f_2)\|_{L^2_{\xi,\tau}}\lesssim \sigma\|f_1\|_{L_{\xi_{1},\tau_1}^2}\|f_2\|_{L_{\xi_{2},\tau_2}^2}.
\end{equation}

According to (\ref{L-res}), all temporal frequency variables will disappear if we
subtract $L$ from $L_1 + L_2$, that is:
\be\label{res_fn_general}
    L_1 + L_2 - L = \Big( -\alpha \xi_1^3 + \frac{\beta}{\sigma^2} \xi_1 \Big) + \Big( -\alpha \xi_2^3 + \frac{\beta}{\sigma^2} \xi_2\Big) + \xi^3 := H_{\sigma}(\xi, \xi_1, \xi_2),
\ee
where the function $H_{\sigma}$ is called the resonance function. By substituting $\xi_2 = \xi-\xi_1$, we obtain 
\[
    H_{\sigma}(\xi, \xi_1, \xi_2) = -3\a\xi\xi_1^2 + 3\a\xi^2\xi_1 + (1-\a)\xi^3 + \frac{\b}{\sigma^2}\xi.
\]
When $\xi$ is fixed, the above expression can be viewed as a function in $\xi_1$. We denote this function as $H_{\sigma}^{\xi}$ which can be rewritten below: 
\be\label{res_fn_sigma}
    H_{\sigma}^{\xi}(\xi_1) = -3\a\xi \Big(\xi_1^2 - \xi\xi_1 + \frac{\a-1}{3\a}\xi^2 - \frac{\b}{3\a\sigma^2} \Big).
\ee

When $\alpha\in (0,4)\setminus \{1\}$, on the one hand, 
\[\xi_1^2 - \xi\xi_1 + \frac{\a-1}{3\a}\xi^2 = 0\]
has two roots $\xi_1 = c_1\xi$ and $c_2\xi$, where   
\be\label{c1c2}
    c_1 = \frac12 + \frac{R_\a}{6}, \qquad
    c_2 = \frac12 - \frac{R_\a}{6}, \qquad 
    R_{\a} = \sqrt{12/\a - 3},
\ee
so $H_{\sigma}^{\xi}(\xi_1)$ can be expressed as 
\be\label{H_factor}
    H_{\sigma}^{\xi}(\xi_1) = -3\a\xi (\xi_1 - c_1\xi)(\xi_1-c_2\xi) + \frac{\b\xi}{\sigma^2}.
\ee
On the other hand, if $|\xi|$ is so large that 
\be\label{large_xi2}
    |\xi|^2 \geq \frac{24 |\b|}{(12-3\a) \sigma^2} = \frac{24|\b|}{\a R_\a^2 \sigma^2},
\ee
then the resonance function $H_{\sigma}^{\xi}$ can be factored below:
\begin{equation}\label{Resonance04}
    H_\sigma^{\xi}(\xi_{1}) = -3\alpha\xi\left(\xi_{1}-x_1\right)\left(\xi_1-x_2\right),
\end{equation}
where 
\begin{equation*}
x_1 = \frac{1}{2}\xi+\frac{1}{6}\sqrt{(R_{\alpha}\xi)^2+\frac{12}{\alpha}\frac{\beta}{\sigma^2}},\quad 
x_2 = \frac{1}{2}\xi-\frac{1}{6}\sqrt{(R_{\alpha}\xi)^2+\frac{12}{\alpha}\frac{\beta}{\sigma^2}}. 
\end{equation*}
We can further expand $x_1$ and $x_2$ in terms of the order of $|\xi|$ to obtain
\[\left\{\begin{array}{l}
    x_1 = \frac12 \xi + \frac16 R_\a |\xi| + \frac{\lam}{\sigma^2 |\xi|} + Q_1(\xi), \vspace{0.1in}\\
    x_2 = \frac12 \xi - \frac16 R_\a |\xi| - \frac{\lam}{\sigma^2 |\xi|} + Q_2(\xi),
\end{array}\right.
\qquad \lam:= \frac{\b}{\a R_\a},\]
where 
\be\label{decom_xi2}
    |Q_{j}(\xi)| \leq \frac{12\lam^2}{R_\a \sigma^4} |\xi|^{-3}, \qquad j=1,2, \qquad \text{$\forall\, \xi$ satisfies (\ref{large_xi2})}.
\ee
The case when $\xi>0$ and the case when $\xi<0$ are similar, so let us focus on the former case. 

In the remaining proof, we assume $\xi>0$. Then for positive $\xi$ which satisfies (\ref{large_xi2}), it follows from (\ref{decom_xi2}) that $x_1$ and $x_2$ can be rewritten as 
\be\label{xi_decom2}
    x_1 = c_1\xi + \frac{\lambda}{\sigma^2 \xi} + Q_1(\xi), \qquad
    x_2 = c_2\xi - \frac{\lambda}{\sigma^2\xi} + Q_2(\xi).
\ee
Since $\a\neq 1$, neither $c_1$ nor $c_2$ is zero. 

\noindent \textbf{Region (1).} $0 < \xi \le E_{\alpha,\beta,s}$, where $E_{\a,\b,s}$ is a constant which only depends on $\a$, $\b$ and $s$. The specific choice of $E_{\a,\b,s}$ will be determined later.

For $s \ge 0$, we have
\begin{equation*}
\frac{\xi\langle\xi\rangle^s}{\langle\xi_{1}\rangle^s\langle\xi_2\rangle^s\langle \tau-\xi^3\rangle^\frac{1}{2}} \le \frac{\xi\langle\xi\rangle^s}{\langle \xi\rangle^s\langle \tau-\xi^3\rangle^\frac{1}{2}} \le  E_{\alpha,\beta,s}.
\end{equation*}
Then by H\"older's inequality and Lemma \ref{lemma4}, we have
\begin{equation}\label{TEMPF1}
\begin{aligned}
\|\mathscr{B}_s(f_1,f_2)\|_{L^2_{\xi,\tau}}&\lesssim 
\Big\|\iint\limits_{\substack{\xi_1+\xi_{2} = \xi \\ \tau_1+\tau_2 = \tau}}\frac{f_1(\xi_{1},\tau_1)f_2(\xi_{2},\tau_2)}{\langle L_1\rangle^{1/2}\langle L_2\rangle^{1/2}}\dd\xi_1\dd\tau_1\Big\|_{L^2_{\xi,\tau}} = \|F_1F_2\|_{L^2_{x,t}}\\
&\le  \|F_1\|_{L^4_{x,t}}\|F_2\|_{L^4_{x,t}}\lesssim\|F_1\|_{X^{0,\frac{1}{3}}_{4,\beta_{\sigma}}} \|F_2\|_{X^{0,\frac{1}{3}}_{4,\beta_{\sigma}}}\le \|f_1\|_{L^2_{x,t}} \|f_2\|_{L^2_{x,t}}
\end{aligned}
\end{equation}
where 
\begin{equation}\label{TEMPD1}
{F_1}(x,t) =\mathscr{F}^{-1}\{ \langle L_1\rangle^{-\frac{1}{2}} {f_1}(\xi_1,\tau_1)\},\quad{F_2}(x,t) = \mathscr{F}^{-1}\{\langle L_2\rangle^{-\frac{1}{2}} {f_2}(\xi_2,\tau_2)\}.
\end{equation}

\noindent \textbf{Region (2).} $\xi > E_{\alpha,\beta,s}$, $ |\xi_{1}-c_1\xi|\geq \frac{1}{2\sigma}$ and $|\xi_{1}-c_2\xi|\geq \frac{1}{2\sigma}$, where $c_1$ and $c_2$ are as defined in \eqref{c1c2}.

By choosing $E_{\a,\b,s}$ such that 
\be\label{const_M1}
    E_{\a,\b,s} \geq \frac{4}{R_\a}\bigg(1+\frac{|\b|}{\a}\bigg),
\ee
then for any $\xi > E_{\a,\b,s}$, (\ref{large_xi2}) is satisfied and moreover,  
\[
    \frac{\lam}{\sigma^2\xi} + \frac{12\lam^2}{R_a\sigma^4}\xi^{-3} \leq \frac{1}{2\sigma}.
\]
So it follows from (\ref{xi_decom2}) and (\ref{decom_xi2}) that
\be\label{loc_x12}
    x_1 \in \Big[c_1\xi - \frac{1}{2\sigma}, c_1\xi + \frac{1}{2\sigma}\Big], \qquad x_2 \in \Big[c_2\xi - \frac{1}{2\sigma}, c_2\xi + \frac{1}{2\sigma}\Big],
\ee
where $x_1$ and $x_2$ are the two roots of the quadratic function $H_{\sigma}^{\xi}(\xi_1)$ in (\ref{Resonance04}). Then due to the constraint $|\xi_1 - c_k \xi|\geq \frac{1}{2\sigma}$ for $k=1,2$, we know
\[
    \big| H_{\sigma}^{\xi}(\xi_1)\big| \geq \min\bigg\{
    \Big| H_{\sigma}^{\xi}\Big( c_1\xi - \frac{1}{2\sigma}\Big)\Big|, \, \Big|H_{\sigma}^{\xi}\Big( c_1\xi + \frac{1}{2\sigma}\Big)\Big|, \, \Big|H_{\sigma}^{\xi}\Big( c_2\xi - \frac{1}{2\sigma}\Big)\Big|, \, \Big|H_{\sigma}^{\xi}\Big( c_2\xi + \frac{1}{2\sigma}\Big)\Big|\bigg\}.
\]
By direct computation, it follows from (\ref{H_factor}) that 
\[
    \begin{aligned}
        \Big|H_\sigma^{\xi}\Big(c_1\xi-\frac{1}{2\sigma}\Big)\Big| &= \Big| \frac{3\a\xi}{2\sigma}\Big[ (c_1 - c_2)\xi - \frac{1}{2\sigma} \Big] + \frac{\b\xi}{\sigma^2} \Big| = \Big|\frac{\a R_\a}{2\sigma}\xi^2 - \frac{3\a}{4\sigma^2}\xi + \frac{\b}{\sigma^2}\xi\Big|.
    \end{aligned}
\]
Since $\xi > E_{\a,\b,s}$, where $E_{\a,\b,s}$ has a lower bound as that in (\ref{const_M1}), then it follows from the above equality that 
\[
    \Big|H_\sigma^{\xi}\Big(c_1\xi-\frac{1}{2\sigma}\Big)\Big| \geq \frac{\a R_\a}{4\sigma}\xi^2.
\]
Similarly, we can justify that $\frac{\a R_\a}{4\sigma}\xi^2$ is the common lower bound of $|H_{\sigma}^{\xi}(c_k\xi \pm \frac{1}{2\sigma})|$ for $k=1,2$, which implies that 
\[
    \big| H_{\sigma}^{\xi}(\xi_1)\big| \geq \frac{\a R_\a}{4\sigma}\xi^2.
\]

Let $MAX_1 := \max\{\langle L\rangle,\langle L_1\rangle,\langle L_2\rangle\} $, then it follows from the relation $(\ref{res_fn_general})$ that 
\begin{equation*}
MAX_1 \geq \frac13 |H_\sigma^{\xi}(\xi_{1})| \gtrsim \frac{\xi^2}{\sigma}.
\end{equation*}

\noindent \textbf{Region (2.1).} $\langle L\rangle = MAX_1$.

In this case, we have $\la \tau-\xi^3 \ra = \la L \ra \gs \xi^2/\sigma$, so
\begin{equation*}
\frac{|\xi|\langle\xi\rangle^s}{\langle\xi_1\rangle^s\langle \xi_2\rangle^s} \frac{1}{\langle\tau-\xi^3\rangle^\frac{1}{2}} \lesssim \sigma^{\frac{1}{2}} \frac{\langle\xi\rangle^s}{\langle\xi_1\rangle^s\langle \xi_2\rangle^s} \lesssim \sigma^{\frac{1}{2}},
\end{equation*} 
where the last inequality is due to $\xi = \xi_1 + \xi_2$ and $s\geq 0$. Then the rest argument is similar to that in (\ref{TEMPF1}) with an extra coefficient $\sigma^{\frac12}$.

\noindent \textbf{Region (2.2).} $\langle L_1\rangle= MAX_1$ or $\langle L_2\rangle= MAX_1$.

We only consider the case $\langle L_1\rangle= MAX_1$. By duality, \eqref{BilinearFirstPart2} is equivalent to
\begin{equation}\label{Duality4}
\left|\int \frac{\xi\langle\xi\rangle^s g(\xi,\tau)f_1(\xi_{1},\tau_1)f_2(\xi_{2},\tau_2)}{\langle\xi_1\rangle^s\langle\xi_2\rangle^s\langle L\rangle^{1/2}\langle L_1\rangle^{1/2}\langle L_2\rangle^{1/2}}\dd\xi_1\dd\tau_1\dd\xi\dd\tau \right|\lesssim \sigma \|f_1\|_{L^2_{\xi_1,\tau_1}}\|f_2\|_{L^2_{\xi_2,\tau_2}}\|g\|_{L^2_{\xi,\tau}}. 
\end{equation}
Similar to the discussion in Region (2.1), we have 
\[\frac{|\xi|\langle\xi\rangle^s}{\langle\xi_1\rangle^s\langle \xi_2\rangle^s} \frac{1}{\langle L_1\rangle^\frac{1}{2}} \lesssim  \sigma^{\frac{1}{2}}.\] 
Then, by H\"older's inequality, we find
\be\label{PlanHold1}
LHS \text{ of } \eqref{Duality4} \lesssim  \sigma^{\frac{1}{2}}\left|\iint F_1(x,t)F_2(x,t)G(x,t) \dd x \dd t\right| \le  \sigma^{\frac{1}{2}}\|F_1\|_{L^2_{x,t}} \|F_2\|_{L^4_{x,t}} \|G\|_{L^4_{x,t}},
\ee
where 
\be\label{TEMPD2}
F_1(x,t) = \mathscr{F}^{-1}\{f({\xi_1,\tau_1})\},\quad{G}(x, t) =\mathscr{F}^{-1}\{\langle L_2\rangle^{-\frac{1}{2}}{f_2}(\xi_2, \tau_2)\},\quad{F_2}(x, t) = \mathscr{F}^{-1}\{\langle L\rangle^{-\frac{1}{2}}{g}(-\xi, -\tau)\}.
\ee
Now we apply Lemma\eqref{lemma4} to (\ref{PlanHold1}) to
obtain
\be\label{TEMPF2}
LHS \text{ of } \eqref{Duality4} \ls \sigma^{\frac{1}{2}} \|F_1\|_{L^2_{x,t}} \|F_2\|_{X^{0,\frac{1}{3}}_{4,\beta_{\sigma}}} \|G\|_{X^{0,\frac{1}{3}}} \le   \sigma^{\frac{1}{2}}\|f_1\|_{L^2_{\xi_1,\tau_1}}\|f_2\|_{L^2_{\xi_2,\tau_2}}\|g\|_{L^2_{\xi,\tau}} \leq RHS \text{ of } \eqref{Duality4}.
\ee

\noindent\textbf{Region (3).} $\xi> E_{\alpha,\beta,s}$, $ |\xi_{1}-c_1\xi|\le \frac{1}{2\sigma}$ or $|\xi_{1}-c_2\xi|\le \frac{1}{2\sigma}$ .

The frequency $\xi$ in (\ref{xi_decom2}) belongs to $\m{Z}/\sigma$, so in order to take advantage of classical results from Diophantine approximation theory to estimate $|\xi_1-x_1|$ and $|\xi_1 - x_2|$, it is helpful to convert $\xi$ to be an integer. Thus, we introduce 
\[
    \wt{\xi} := \sigma \xi \quad \text{and} \quad\wt{\xi}_{1} := \sigma \xi_1,
\] 
so that both $\wt{\xi}$ and $\wt{\xi}_1$ are integers. Thanks to the constraint that $|\xi| > E_{\a,\b,s}$, (\ref{large_xi2}) automatically holds. 
As a consequence, it follows from (\ref{xi_decom2}) that
\be\label{xi_decom_int}
    x_1 = \frac{1}{\sigma} \bigg[ c_1\wt{\xi} + \frac{\lambda}{\wt{\xi}}\bigg] + Q_1(\xi), 
    \qquad
    x_2 = \frac{1}{\sigma} \bigg[ c_2\wt{\xi} - \frac{\lambda}{ \wt{\xi}}\bigg] + Q_2(\xi).
\ee
Hence,
\be\label{distance_xi}
\left\{\begin{array}{l}
|\xi_{1}-x_1| \geq \dfrac{1}{\sigma}\Bigg|\widetilde{\xi}_1-c_1\widetilde{\xi} - \dfrac{\lambda}{\widetilde{\xi}} \Bigg|-|Q_1(\xi)|, \vspace{0.15in}
\\
|\xi_{1}-x_2| \geq \dfrac{1}{\sigma}\Bigg|\widetilde{\xi}_1-c_2\widetilde{\xi} + \dfrac{\lambda}{\widetilde{\xi}} \Bigg|-|Q_2(\xi)|.
\end{array}\right.\ee

Now we focus only on the region where 
\be\label{loc_xi1}
    |\xi_{1} - c_1\xi| \le \frac{1}{2\sigma}
\ee
since the proof for the region \(|\xi_{1} - c_2\xi| \le \frac{1}{2\sigma}\) is analogous. In this region, we first demonstrate the sizes of $\xi_1$, $\xi_2$ and $\xi$ are comparable as long as $E_{\a,\b,s}$ is large enough. 
Since $c_1 + c_2 = 1$ and $\xi_1 + \xi_2 = \xi$, then
\be\label{res_region}
    |\xi_2 - c_2\xi| = |\xi_1 - c_1\xi| \leq \frac{1}{2\sigma}.
\ee
In addition to the constraint (\ref{const_M1}), we further require that 
\be\label{const_M2}
    E_{\a,\b,s} \geq \frac{1}{|c_2|} = \frac{6}{|3-R_\a|}.
\ee
We point out that the denominator $|c_2|$ or $|3-R_\a|$ is nonzero since $\a\neq 1$. Then as $\xi > E_{\a,\b,s}$ and $\sigma\geq 1$, it follows from (\ref{c1c2}) and (\ref{const_M2}) that 
\be\label{aux1}
    |c_2 \xi| \geq \frac{1}{\sigma} \quad\text{and}\quad |c_1 \xi| \geq \frac{1}{\sigma}. 
\ee
Combining (\ref{const_M2}) with (\ref{aux1}) yields 
\be\label{comparable}
    \langle\xi_1\rangle \sim \langle\xi_2\rangle \sim \langle\xi\rangle,
\ee
which implies that for any $s\geq \frac12$,
\be\label{weight_est1}
	\frac{|\xi|\langle \xi\rangle^s}{\langle MAX_1\rangle^{\frac{1}{2}}\langle\xi_{1}\rangle^s\langle\xi_{2}\rangle^s}\lesssim  \frac{|\xi|^{\frac{1}{2}}}{\la MAX_1\ra^{\frac12}} .
\ee

Next, we estimate the size of the resonance function $H_{\sigma}^{\xi}$ which can be decomposed below (see (\ref{Resonance04})):
\be\label{res5}
    H_{\sigma}^{\xi}(\xi_1) = -3\a\xi(\xi_1-x_1)(\xi_1-x_2).
\ee
Since $\xi_1$ is near $c_1\xi$ and $x_2$ is near $c_2\xi$, the distance between $\xi_1$ and $x_2$ should be close to $(c_1-c_2)\xi$. In fact, 
\[\begin{split}
    |\xi_1-x_2| &= |(\xi_1 - c_1\xi) + (c_1-c_2)\xi + (c_2\xi - x_2)| \\
    &\geq (c_1-c_2)\xi - |\xi_1 - c_1\xi| - |c_2\xi - x_2|,
\end{split}\]
it then follows from (\ref{loc_xi1}) and (\ref{loc_x12}) that 
\[
    |\xi_1-x_2| \geq (c_1-c_2)\xi - \frac{1}{\sigma} = \frac{R_\a}{3}\xi - 1 \geq \frac{R_\a}{12}\xi,
\]
where the last inequality is due to $\xi>E_{\a,\b,s}\geq \frac{4}{R_\a}$ as shown in (\ref{const_M1}). Hence, we deduce from (\ref{res5}) that 
\begin{equation}\label{regin3H}
|H_\sigma^{\xi}(\xi_{1})|\gtrsim |\xi|^2|\xi_{1}-x_1|.  
\end{equation}
In what follows, we divide the discussion into three cases.

\noindent\textbf{Case 3.1.  $R_\alpha \in \mathbb{Q}$.}

In this case, $c_1 = \frac{1}{2}+\frac{R_{\alpha}}{6}\in \mathbb{Q}$. Let 
\begin{equation}\label{Prime}
c_1 = \frac{q}{p}, \qquad \text{where} \quad p, q\in\mathbb{Z^+} \text{ and } \gcd(p,q) = 1.
\end{equation}
Then using (\ref{decom_xi2}) and \eqref{distance_xi}, $|\xi_1 - x_1|$ has a lower bound below:
\be\label{diff_xi_x}
|\xi_{1}-x_1| \geq \frac{1}{\sigma}\left|\widetilde{\xi}_1-c_1\widetilde{\xi} - \frac{\lambda}{\widetilde{\xi}}\right|-|Q_1(\xi)|
\geq \frac{1}{\sigma}\Bigg(\bigg|\frac{p\wt\xi_1-q\wt\xi}p-\frac{\lambda}{\wt{\xi}}\bigg|-\frac{12\lambda^2}{R_\a |\widetilde\xi|^3}\Bigg). 
\ee

\begin{itemize}
	\item When $ p\widetilde\xi_{1}-q\widetilde\xi = 0$, we have 
	$ \frac{12\lambda^2}{R_\a |\widetilde\xi|^3} \le \frac{\lam}{2\widetilde{\xi}} $ since $|\xi|>E_{\a,\b,s}$. Hence, it follows from (\ref{diff_xi_x}) that 
	\[
		|\xi_{1}-x_1|\gtrsim \frac{1}{\sigma^2|\xi|}.
	\]

	\item When $p\widetilde\xi_{1}-q\widetilde\xi \neq 0$, we have
\begin{equation*}
|p\widetilde\xi_{1}-q\widetilde\xi| \geq 1.
\end{equation*} 
So it follows from (\ref{diff_xi_x}) that 
\[
	\sigma |\xi_1 - x_1| \geq \bigg|\frac{p\widetilde\xi_1-q\widetilde\xi}{p}-\frac{\lambda}{\widetilde{\xi}}\bigg|-\frac{12\lambda^2}{R_\a |\widetilde\xi|^3} \geq \frac{1}{p} - \frac{|\lam|}{\wt{\xi}} - \frac{12\lambda^2}{R_\a \widetilde\xi^3}.
\]
Since $\xi$ satisfies (\ref{large_xi2}), then one can directly check that 
\begin{equation}\label{RationalLowerBound}
\sigma |\xi_1 - x_1| \geq \frac{1}{p} - \frac{3 |\lam|}{2\wt{\xi}}.
\end{equation}

To obtain a positive lower bound of \eqref{RationalLowerBound}, we
require 
\begin{equation}\label{const_M3}
	E_{\alpha,\beta,s}\geq 2p|\lambda| = \frac{2p|\beta|}{\alpha R_\alpha},
\end{equation}
which depends only on $\alpha$ and $\beta$, then $\wt{\xi} \geq \xi \geq 2p |\lam|$ and
\[
	\frac{1}{p} - \frac{3 |\lambda|}{2\widetilde{\xi}} \gtrsim \frac{1}{\widetilde{\xi}}.
\]
Plugging this into (\ref{RationalLowerBound}) yields 
\[|\xi_1 - x_1| \gs \frac{1}{\sigma \wt{\xi}} = \frac{1}{\sigma^2 \xi}.\]
\end{itemize}

As a summary, no matter whether $p\widetilde\xi_{1}-q\widetilde\xi = 0$ or not, it always holds that $|\xi_1 - x_1| \gs \frac{1}{\sigma^2 \xi}$. As a result,
\begin{equation*}
	|H_\sigma^{\xi}(\xi_{1})|\gtrsim |\xi|^2|\xi_{1}-x_1|\gtrsim \frac{|\xi|}{\sigma^2}.  
\end{equation*}
Consequently, for any $s\geq \frac12$, it follows from (\ref{weight_est1}) that 
\begin{equation*}
\frac{|\xi|\langle \xi\rangle^s}{\langle MAX_1\rangle^{\frac{1}{2}}\langle\xi_{1}\rangle^s\langle\xi_{2}\rangle^s}
\ls \frac{|\xi|^{\frac{1}{2}}}{\la MAX_1\ra^{\frac12}} \ls \sigma.
\end{equation*}
The rest argument is similar to that for (\ref{TEMPF1}) with an extra coefficient $\sigma$.

\noindent\textbf{Case 3.2. $R_\alpha \notin \mathbb{Q}$ and $\max\{\nu_{\lambda}(c_1), \nu_{\lam}(c_2)\} \geq1$.}

In this case, $s^*(\alpha,\beta) = 1$. Then for $s\geq1$, it follows from (\ref{comparable}) that 
\begin{equation*}
\frac{|\xi|\langle\xi \rangle^s}{\langle\xi_1 \rangle^s\langle\xi_2 \rangle^s}\lesssim 1.
\end{equation*}
Hence, the rest computation is same as that for (\ref{TEMPF1}).

\noindent\textbf{Case 3.3. $R_\alpha\notin \mathbb{Q}$ and $\max\{\nu_{\lambda}(c_1), \nu_{\lam}(c_2)\} < 1$.}

Following the idea in \cite{Oh2009}, we apply the Diophantine approximation theory to handle this case. Recalling Definition \ref{Def, mbti2} which requires a lower bound $N$ for $|n|$, so we need to ensure $|\xi| > N$ in order to apply this theory. Meanwhile, it is important to note that we also have the case \(|\xi_{1} - c_2\xi| \le \frac{1}{2\sigma}\), and for this case, one should substitute \(\nu_{\lambda}(c_2)\) for \(\nu_{\lambda}(c_1)\) in the subsequent proof. This substitution directly gives the desired index \eqref{s_0} as follows:
\[
s_{\alpha,\beta} = \max\{\nu_{\lambda}(c_1),\nu_{\lambda}(c_2)\}.
\]
To facilitate the proper use of the index \eqref{s_0} hereafter, we thus define
\begin{equation}\label{LowerBoundofN}
    N(\alpha,\beta)=\max\{N_1,N_2\},
\end{equation}
where $N_i = N_i(c_i,\lambda)$,\ $i=1,2$, is exactly the lower bound \(N_i\) given in the definition of \(\nu_{\lambda}(c_i)\), and it depends only on \(\alpha\) and \(\beta\). Therefore, we require 
\begin{equation}\label{const_M4}
    E_{\alpha,\beta,s}\geq N(\alpha,\beta) + 1
\end{equation}
to guarantee $|\xi| > N(\a,\b)$ whenever $|\xi| \geq E_{\a, \b,s}$.
Since $s > s^{*}(\a,\b)$, we define a positive parameter $\veps$ below:
\[
    \varepsilon = 2s - 1 - s(\a,\b) = 2[s - s^{*}(\a,\b)].
\]
Then $0 < \veps \leq 2s - 1 - \nu_{\lam}(c_1)$. If $s\geq 1$, then similar to Case 3.2, the desired estimate holds automatically. Next, we assume $s^*(\a,\b) < s < 1$, which implies that 
\[
    \nu(c_1) + \veps \leq 2s-1 < 1.
\]

With \eqref{distance_xi} and (\ref{decom_xi2}), it holds that
\begin{equation}\label{xi1minusx1}
\begin{split}
|\xi_{1}-x_1| &\geq 
\frac{|\widetilde\xi|}{\sigma}\bigg|c_1-\frac{\widetilde{\xi}_1}{\widetilde\xi}+\frac{\lambda}{\widetilde{\xi}^2}\bigg| - |Q_1(\xi)| \\
&\geq |\xi| \bigg(\bigg|c_1-\frac{\widetilde{\xi}_1}{\widetilde\xi}+\frac{\lambda}{\widetilde{\xi}^2}\bigg| - \frac{12\lambda^2}{R_\a|\widetilde\xi|^4} \bigg).
\end{split}\end{equation}
According to the definition of $\nu_{\lam}(c_1)$, there exist a positive number $K_1 = K_1(c_1, \nu_{\lam}(c_1)+\veps, \lam)$, which only depends on $\a$, $\b$ and $s$, and a positive integer $N_1 = N_1(c_1, \lam)$, such that 
\[ 
    \Big| c_1 - \frac{m}{n} + \frac{\lam}{n^2} \Big| \geq \frac{K_1}{|n|^{2 + \nu_{\lam}(c_1)+\veps}}, \qquad \forall\, (m,n)\in\m{Z}^2, \, |n| > N_1.
\]
In (\ref{xi1minusx1}), $(\wt{\xi}_1, \wt{\xi})\in\m{Z}^2$ and $\wt{\xi} \geq \xi > N_1$, so we can plug the above property into (\ref{xi1minusx1}) to find 
\begin{equation}\label{xi1minusx2}
|\xi_{1}-x_1| \geq |\xi|\left(\frac{K_1}{|\widetilde{\xi}|^{2+\nu_{\lambda}(c_1) + \varepsilon}} - \frac{12\lambda^2}{R_\a|\widetilde\xi|^4} \right).
\end{equation}
Now we further impose a lower bound for $\xi$ by requiring 
\be\label{const_M5}
    E_{\a,\b,s} \geq \frac{24 \lam^2}{R_\a K_1},
\ee
then we have $\wt{\xi} \geq \xi \geq \frac{24 \lam^2}{R_\a K_1}$, which implies that 
\[
    \frac{12\lambda^2}{R_\a|\widetilde\xi|^4} \leq \frac12 \frac{K_1}{|\widetilde{\xi}|^{3}} \leq \frac12 \frac{K_1}{|\widetilde{\xi}|^{2+\nu_{\lambda}(c_1) + \varepsilon}},
\]
where the last inequality is due to the fact that $\nu_{\lambda}(c_1) + \varepsilon \leq 1$. Plugging this estimate into (\ref{xi1minusx2}) leads to 
\begin{equation}\label{xi1minusx3}
\begin{split}
|\xi_{1}-x_1| \geq 
\frac12 |\xi| \frac{ K_1}{|\widetilde{\xi}|^{2+\nu_{\lambda}(c_1) + \varepsilon}} \gs \frac{|\xi|^{-1-\nu_{\lambda}(c_1)-\varepsilon}}{\sigma^{2+\nu_{\lambda(c_1)}+\varepsilon}}.
\end{split}
\end{equation}
Together with  \eqref{regin3H} and \eqref{xi1minusx3}, it holds that
\begin{equation}\label{32a}
|H_\sigma^{\xi}(\xi_{1})|\gtrsim |\xi|^2|\xi_{1}-x_1|\gtrsim \frac{|\xi|^{1-\nu_{\lambda}(c_1)-\varepsilon}}{\sigma^{2+\nu_{\lambda}(c_1)+\varepsilon}}.
\end{equation}
As a summary, by choosing $E_{\a,\b,s}$ to satisfy (\ref{const_M1}), (\ref{const_M2}), (\ref{const_M3}), (\ref{const_M4}) and (\ref{const_M5}), we justify the lower bound estimate (\ref{32a}) for the resonance function $|H_\sigma^{\xi}(\xi_{1})|$ in Region (3) for the Case 3.3. 

Finally, thanks to the lower bound estimate (\ref{32a}), the remaining argument is the same as that in Oh's paper (see the argument starting from equation (46) in Case 3 in Part I in the proof of Proposition 3.7 in \cite{Oh2009}).

\end{proof}

\subsubsection{Proof of the auxiliary part of the bilinear estimate}

\begin{proof}[Proof of \eqref{Bilinear3, p2}]

For this part, it suffices to prove that:
\begin{equation}\label{Part23}
\|\langle L\rangle^{-\frac{1}{2}}\mathscr{B}_{s}(f_1,f_2)\|_{L_\xi^2(\mathbb{Z}/\sigma)L_\tau^1(\mathbb{R})}\lesssim \sigma \|f_1\|_{L_{\xi_{1},\tau_1}^2}\|f_2\|_{L_{\xi_{2},\tau_2}^2}.
\end{equation}

We first denote $E_{\a,\b,s}$ to be the same constant as that in the proof of (\ref{Bilinear3, p1}). Then we proceed by dividing the region into several subregions.

\noindent\textbf{Region (1).} $|\xi|\le E_{\alpha,\beta,s}$, or $|\xi| > E_{\alpha,\beta,s}$ with $MAX_1 = \langle L_1\rangle$ or $MAX_1 = \langle L_2 \rangle$.

The proof for this region is identical to that for Region (1) and Region (2) in the proof of \eqref{Bilinear1, p2} in Section \ref{Sec, pos_beta_nr}, so we omit the details here.

\noindent\textbf{Region (2).} $|\xi| > E_{\alpha,\beta,s}$ and $MAX_1 = \langle L \rangle$.

\noindent\textbf{Region (2.1).} $\langle L_1\rangle \geq \frac{1}{2}\langle L\rangle^{6\varepsilon}$ or $\langle L_2\rangle \geq \frac{1}{2}\langle L\rangle^{6\varepsilon}$, where $\veps$ is some small number in $(0, \frac{1}{100})$.

In this case, we have the following inequality:
\[
    \frac{\la L \ra^{\veps}}{\la L \ra^{1/2} \la L_1 \ra^{1/2}\la L_2 \ra^{1/2}}  \ls \frac{1}{\la L \ra^{1/2} \la L_1 \ra^{1/3}\la L_2 \ra^{1/3}}.
\]
The purpose of the above inequality is to eliminate the term $\la L \ra^{\veps}$ in the numerator by paying the price of lowering the powers of $\la L_1\ra$ and $\la L_2 \ra$ from $1/2$ to $1/3$. We point out that the power $1/3$ suffices to obtain the desired result due to Lemma \ref{lemma4}. For example, the estimate \eqref{TEMPF1} is still valid if the terms $\langle L_1\rangle^{\frac{1}{2}}$ and $\la L_2\ra^{\frac{1}{2}}$ in \eqref{TEMPF1} and \eqref{TEMPD1} are replaced with $\la L_1\ra^{\frac{1}{3}}$ and $\la L_2\ra^{\frac{1}{3}}$ respectively.

\noindent\textbf{Region (2.2).} $\langle L_1\rangle \le \frac{1}{2}\langle L\rangle^{6\varepsilon}$ and $\langle L_2\rangle \le \frac{1}{2}\langle L\rangle^{6\varepsilon}$.

Recalling (\ref{res_fn_general}) where
\begin{equation*}
H_\sigma^\xi(\xi_1) =L_1+L_2-L,
\end{equation*} 
so $| H_\sigma^\xi(\xi_1) | \sim |L| \gs (|L_1| + |L_2|)^{\frac{1}{6\veps}}$, which implies that 
\begin{equation*}
	L = -H_\sigma^\xi(\xi_1) - (L_1 + L_2) = -H_\sigma^\xi(\xi_1)+o(|H_\sigma^\xi(\xi_1)|^{10\varepsilon}).
\end{equation*}
Let 
\[
	\Omega(\xi) = \{\eta \in \mathbb{R} : \eta = -H_\sigma^\xi(\xi_1)+o(|H_\sigma^\xi(\xi_1)|^{10\varepsilon}) \text{ for some } \xi_1 \in \mathbb{Z}/\sigma\}.
\] 
Then following the strategy in the proof of (\ref{Bilinear1, p2}) in Case 1 for Region (3.2), it suffices to prove 
\begin{equation}\label{Count3}
|\Omega(\xi) \cap \{| \eta | \sim M\}| \lesssim \sigma M^{\frac{2}{3}},
\end{equation} 
for all $|\xi| > E_{\a,\b,s}$ and for all dyadic $M \ge 1$.

To prove \eqref{Count3}, without loss of generality, we assume $\xi$ is positive and $|\xi_1| \ge |\xi_2|$ since $H_\sigma^\xi(\xi_1)$ is symmetric in $\xi_1$ and $\xi_2$. In addition, for any $\eta\in\Omega(\xi)$ with $|\eta| \sim M$, it is readily seen that $|H_\sigma^\xi(\xi_1)| \sim M$. For any such $\xi_1$, there holds
\be\label{eta_number}
\left|\left\{\eta \in \mathbb{R} : |\eta| \sim M, \eta = -H_\sigma^\xi(\xi_1)+o(|H_\sigma^\xi(\xi_1)|^{10\varepsilon})\right\}\right| \ls M^{10\varepsilon}. 
\ee
Now for any fixed $\xi$ with $|\xi| > E_{\a,\b,s}$, we estimate the number $N_{\xi}$ of possible values of $\xi_1 \in \mathbb{Z}/\sigma$ such that
$ |H_\sigma^\xi(\xi_1)| \sim M$, that is,
\[
	N_{\xi} := \#\{\xi_{1}\in \mathbb{Z}/\sigma: | H_{\sigma}^{\xi}(\xi_1) |\sim M\}.
\]
Thanks to (\ref{eta_number}) and the choice $\veps<\frac{1}{100}$, we know
\[\text{LHS of (\ref{Count3})} \ls N_{\xi} M^{10\veps} \leq N_{\xi} M^{\frac{1}{10}}.\]
So in order to justify (\ref{Count3}), it suffices to prove 
\be\label{eta_count}
	N_\xi \ls \sigma M^{\frac23-\frac{1}{10}}.
\ee

Since we assume $\xi$ to be positive and $|\xi_{1}|\geq |\xi_{2}|$, then $\xi_1\geq \frac{\xi}{2}$. Thus, we divide the domain into three subregions according to whether they are close to $c_1\xi$.

\noindent\textbf{Region (2.2a)} $\frac{\xi}{2}\le \xi_{1}\le c_1\xi-1$.

Recalling (\ref{H_factor}) which shows
\be\label{res_cpt_main}
H_{\sigma}^{\xi}(\xi_1) = -3\a\xi (\xi_1 - c_1\xi)(\xi_1-c_2\xi) + \frac{\b\xi}{\sigma^2} =  -3\a\xi \bigg(\Big(\xi_1-\frac{\xi}{2}\Big)^2 + \frac{\a-4}{12\a}\xi^2-\frac{\b}{3\a \sigma^2}\bigg).
\ee
Noticing that $H_{\sigma}^{\xi}(\xi_1)$, as a function in $\xi_1$, is decreasing in $[\xi/2,c_1\xi-1]$. Therefore, we only need to compute the values of $H_{\sigma}^{\xi}(\xi_1)$ at the endpoints \(\xi/2\) and \(c_1\xi - 1\) to determine its range. Direct computation yields
\begin{equation*}
H_\sigma^\xi\left(\frac{\xi}{2}\right) = 3\alpha\xi \left(\frac{4-\a}{12\a}\xi^2+\frac{\beta}{3\alpha \sigma^2}\right) 
=3\alpha\xi \left(\frac{R_\a^2}{36}\xi^2 + \frac{\beta}{3\alpha \sigma^2}\right)
\end{equation*}
and
\begin{equation*}
H_\sigma^\xi(c_1\xi-1) = 3\alpha\xi\left((c_1-c_2)\xi - 1+\frac{\beta}{3\alpha\sigma^2}\right),
\end{equation*}
where $|\xi| > E_{\alpha,\beta,s}$. Thanks to (\ref{const_M1}), we have 
\[
	\frac{R_\a^2}{36} \xi^2 \geq \frac{2|\beta|}{3\alpha} \qquad\text{and}\qquad (c_1 - c_2)\xi \geq 2 + \frac{2|\b|}{3\a},
\]
so both $H_\sigma^\xi(\frac{\xi}{2})$ and $H_\sigma^\xi(c_1\xi-1)$ are positive, and
\[
\left|H_\sigma^\xi\left(\frac{\xi}{2}\right)\right| \sim |\xi|^3
\quad\text{and}\quad
\left|H_\sigma^\xi(c_1\xi-1)\right| \sim |\xi|^2,
\]
which implies that $|H_\sigma^\xi(\xi_1)| \sim |\xi|^p$ for some $p \in [2,3]$, i.e. $|\xi| \sim |H_\sigma^\xi(\xi_1)|^{1/p} \sim M^{1/p}$.
Now for any fixed $\xi$ such that $|\xi| > E_{\a,\b,s}$ and $|\xi| \sim M^{1/p}$, the length of the interval $[\xi/2, c_1\xi-1]$ is of size $M^{1/p} \leq M^{1/2}$, so there are at most $\sigma M^{1/2}$ possible values of $\xi_1$ in this interval, which justifies (\ref{eta_count}).

\noindent\textbf{Region (2.2b)}. $c_1\xi - 1 \leq \xi_1 \leq c_1\xi+1$.

In this case, there are at most $2\sigma$ many possible values of $\xi_1$, which also satisfies (\ref{eta_count}).

\noindent\textbf{Region (2.2c)}. $\xi_1 \ge c_1\xi+1$.

In this case, it follows from (\ref{res_cpt_main}) that $H_\sigma^\xi(\xi_1)$ is decreasing and 
\[
H_\sigma^\xi(c_1\xi+1) = -3\a \xi \left((c_1-c_2)\xi+1-\frac{\b}{3\a \sigma^2}\right),
\]
where $|\xi|>E_{\a,\b,s}$. Recalling that $(c_1 - c_2)\xi \geq 2 + \frac{2|\b|}{3\a}$, so we have $H_\sigma^\xi(c_1\xi+1) < 0$ and $|H_\sigma^\xi(c_1\xi+1)| \gs |\xi|^2$. Meanwhile, since $H_\sigma^\xi(\xi_1)$ is decreasing on $[c_1\xi+1, \infty)$, we know $H_\sigma^\xi(\xi_1) < 0$ and $M\sim|H^{\xi}_{\sigma}(\xi_1)|\gtrsim |\xi|^2$. Note that $H_\sigma^\xi(\xi_1)$ can be rewritten as
\begin{equation*}
H_\sigma^\xi(\xi_1) = -3\alpha\xi\Big(\xi_1-\frac{\xi}{2}\Big)^2 +\frac{\alpha R_\alpha^2}{12}\xi^3+\frac{\xi\beta}{\sigma^2}.
\end{equation*}
Let $\xi\sim N$ be fixed, where $N$ is a dyadic number. 
Denote 
$h^\xi(\xi_{1}) = -3\alpha\xi(\xi_{1}-\frac{\xi}{2})^2$.
Then 
\[
    h^\xi(\xi_{1}) = H_\sigma^\xi(\xi_1) - \frac{\alpha R_\alpha^2}{12}\xi^3 - \frac{\xi\beta}{\sigma^2},
\]
which implies that 
\[
    |h^\xi(\xi_{1})| \leq |H_\sigma^\xi(\xi_1)| + \frac{\alpha R_\alpha^2}{6}\xi^3 \leq |H_\sigma^\xi(\xi_1)| + 2\xi^3.
\]
Therefore, 
\begin{equation}\label{Numberxi}
\#\{\xi_1 \in \mathbb{Z}/\sigma: \xi_1 \ge c_1\xi+1 \text{ and } |H_\sigma^\xi(\xi_1)| \sim M\} \le \#\{\xi_1 \in \mathbb{Z}/\sigma: \xi_1 \ge c_1\xi+1 \text{ and } |h^\xi(\xi_1)|\ls M + N^3\}.
\end{equation}
Recalling that $h^\xi(\xi_{1}) = -3\alpha\xi(\xi_{1}-\frac{\xi}{2})^2$ and $M\sim|H^{\xi}_{\sigma}(\xi_1)|\gtrsim |\xi|^2 \sim N^2$, then
\[
\text{RHS of \eqref{Numberxi} } \lesssim \sigma\left(\frac{M+N^3}{N}\right)^{\frac{1}{2}}\ls \sigma M^{\frac{1}{2}},
\]
which verifies (\ref{eta_count}).
\end{proof}

Thus, the justification of (\ref{Bilinear3}) is finished. Next, we will illustrate why (\ref{Bilinear4}) needs the extra assumption that the function $w_1$ has zero mean.  
Recalling the resonance function \eqref{H_tilde_root}:
\[
 \wt{H}^{\xi_1}(\xi) := 3\a\xi_1 \bigg[ (\xi - c_1 \xi_1)(\xi - c_2 \xi_1) - \frac{\b}{3\a} \bigg].
\]
When $\xi_1$ is fixed, we regard $\wt{H}^{\xi_1}(\xi)$ as a function of $\xi$, which is similar to the resonance function \eqref{H_res_root} for \eqref{Bilinear3}. Hence the proof of the second bilinear estimate \eqref{Bilinear4} are
analogous to \eqref{Bilinear3}. The major difference here is the extra singularity induced by $\xi_1$ for \eqref{Bilinear4}. To ensure the estimate is valid when $\xi_1 = 0$, the assumption that $\mathscr{F}_{x}{w_1}(0, t) = 0$ for any $t$, i.e., the mean value of $w_1$ is zero, is necessary.

Hence, Proposition \ref{Prop2} has been established.

\section{The Ill-Posedness Results}
\label{Sec, ip}
Recalling that a solution map being $C^k$ ($k \ge 1$) means that there exists $T>0$ such that the map from the initial data $(u_0,v_0) \in {H}^s(\mathbb{T})\times{H}^s(\mathbb{T})$ to the local solution $(u,v) \in C([0,T];{H}^s(\mathbb{T})\times{H}^s(\mathbb{T}))$ is $C^k$. Taking $(u_0,v_0)=(\delta\phi,\delta\psi)$, so that \eqref{mMajdaBiello} becomes
\begin{equation}\label{DMajdaBiello}
\left\{
\begin{aligned}
	&u_t+u_{xxx}+vv_x=0, \\
	&v_t+\alpha v_{xxx}+\beta v_x+(uv)_x=0, \\
	&(u_0,v_0) = (\delta\phi(x),\delta\psi(x)),
\end{aligned}\right. \qquad x\in\mathbb{T}, \, t\in\mathbb{R},
\end{equation}
where $\delta \ge 0$ and $(\phi,\psi) \in {H}^s(\mathbb{T})\times{H}^s(\mathbb{T})$. 

Denote the solution of \eqref{DMajdaBiello} to be $( u(x, t,\delta), v(x, t,\delta) )$. Then it follows from the Duhamel's principle that
\begin{equation}\label{solnform}
\left\{\begin{aligned}
	u(x, t,\delta) &= \delta S(t)\phi(x) - \frac{1}{2} \int_{0}^{t} S(t-t')\partial_x(v^2)(x, t', \delta) \, dt', \\
	v(x, t,\delta) &= \delta S_{\alpha,\beta}(t)\psi(x) - \int_{0}^{t} S_{\alpha,\beta}(t-t')\partial_x(uv)(x, t', \delta) \, dt',
\end{aligned}\right.
\end{equation}
where $S(t)$ and $S_{\a,\b}(t)$ are the semigroup operators defined as in (\ref{semigroup op}).

When $\delta=0$, the initial data $(u_0, v_0)$ in (\ref{DMajdaBiello}) is $(0,0)$ and the unique solution is also $(0,0)$, which means $(u(t,0),v(t,0))=(0,0)$. Then by taking derivative of $(u(x,t,\delta), v(x,t,\delta))$ with respect to $\delta$ at $0$, it follows from (\ref{solnform}) that
\be\label{phi1psi1}
\left\{
\begin{aligned}
	\partial_\delta u(x,t,0) &= [S(t)\phi](x) =: \phi_1(x,t), \\
	\partial_\delta v(x,t,0) &= [S_{\alpha,\beta}(t)\psi](x) =: \psi_1(x,t).
\end{aligned}\right.
\ee
For convenience, let \[P_{\alpha,\beta}(\eta) = \alpha\eta^3-\beta\eta,\] then
\[
\left\{\begin{aligned}
	& \mathscr{F}_{x}\phi_1(\xi, t) = \mathscr{F}_{x} [S(t)\phi](\xi) = e^{itP_{1,0}(\xi)}\widehat\phi(\xi), \\
	&\mathscr{F}_{x}\psi_1(\xi,t) = \mathscr{F}_{x}[S_{\alpha,\beta}(t)\psi](\xi)  = e^{itP_{\alpha,\beta}(\xi)}\widehat \psi(\xi),
\end{aligned}\right.
\]
where $\mathscr{F}_{x}$ represents the Fourier transform with respect to the spatial variable $x$. By taking the second and third derivatives of $(u(x,t,\delta), v(x,t,\delta))$ in terms of $\delta$ at $0$, we have
\begin{equation}\label{phi2psi2}
\left\{
\begin{aligned}
	\partial_\delta^2 u(x,t,0) &= -\int_0^t S(t-t')\partial_x(\psi_1^2)(x,t') \, dt' =: \phi_2(x,t), \\
	\partial_\delta^2 v(x,t,0) &= -2\int_0^t S_{\alpha,\beta}(t-t')\partial_x(\phi_1\psi_1)(x,t') \,dt' =: \psi_2(x,t),
\end{aligned}\right.
\end{equation}
and
\begin{equation}\label{phi3psi3}
\left\{
\begin{aligned}
	\partial_\delta^3 u(x,t,0) &= -3\int_0^t S(t-t')\partial_x(\psi_1\psi_2)(x,t') \,dt' =: \phi_3(x,t), \\
	\partial_\delta^3 v(x,t,0) &= -3\int_0^t S_{\alpha,\beta}(t-t')\partial_x(\phi_1\psi_2+\phi_2\psi_1)(x,t') \,dt' =: \psi_3(x,t).
\end{aligned}\right.
\end{equation}
Note that if the solution map is $C^k$, then there exists a time $T>0$ and a constant $C$ such that
\begin{equation}\label{Mainidea}
\sup_{0\le t \le T} \Vert(\phi_k,\psi_k)(\cdot,t)\Vert_{{H}^s(\mathbb{T})\times{H}^s(\mathbb{T})} \le C\Vert(\phi,\psi)\Vert_{{H}^s(\mathbb{T})\times{H}^s(\mathbb{T})}^k, \quad k=1,2,3.
\end{equation}

\subsection{Proof of Theorem \ref{thm3}}
\noindent {\bf Case 1}: $\alpha = 4$ and $\beta = 3n^2$ for some $n\in \mathbb{N}$.

In this case, we will prove $s \geq 1$ when the solution map is at least $C^2$.
For any positive integer $N \geq 10 |n| + 10$ such that both $\frac{N+n}{2}$ and $\frac{N-n}{2}$ are integers, we define 
\[
    \phi(x) = 0, \quad \psi(x) = \frac{1}{\pi N^{s}} \Big[\cos\Big(\frac{N+n}{2} x\Big) + \cos\Big(\frac{N-n}{2} x\Big)\Big], \quad \forall\, x\in\m{R}.
\]
As a result, 
\begin{equation}\label{Intial_Ill_4_3n}
\widehat\phi(\xi) = 0, \quad 
\widehat\psi(\xi) = \frac{1}{N^s}\left(\delta\Big(|\xi|-\frac{N+n}{2}\Big)+\delta\Big(|\xi|-\frac{N-n}{2}\Big)\right), \quad\forall \, \xi \in\m{Z},
\end{equation}
where $\delta$ represent the Dirac delta function. Since $N \geq 10|n|+10$, it is readily seen that 
$\|(\phi,\psi)\|_{H^s \times H^s}\sim 1$.
Meanwhile, since the solution map is assumed to be $C^2$, then it follows from (\ref{Mainidea}) with $k=2$ that 
\be\label{phi2bdd}
	\sup_{0\le t \le T} \|\phi_2(\cdot, t) \|_{H^s(\m{T})}
	\ls \Vert(\phi,\psi)\Vert_{{H}^s(\m{T})\times{H}^s(\m{T})}^2 \lesssim 1,
\ee
where $T$ is some fixed positive number. On the other hand, based on the definition of $\phi_2$ in (\ref{phi2psi2}), for any $t\in (0,T)$, the Fourier transform of $\phi_2(\cdot, t)$ in the spatial variable is given below:
\begin{equation}\label{phi2}
\mathscr{F}_{x}\phi_2(\xi, t) = -\xi e^{itP_{1,0}(\xi)}\int_{\mathbb{Z}}\int_0^t e^{it'G(\xi_1,\xi-\xi_{1},-\xi)}\widehat\psi(\xi_1)\widehat\psi(\xi-\xi_{1})\,dt' \,d\xi_{1},
\end{equation}
where the function $G$ is defined on $\Gamma_3$ as follows:  
\be\label{G}
	G(\eta_1,\eta_2,\eta_3) = P_{\alpha,\beta}(\eta_1)+P_{\alpha,\beta}(\eta_2)+P_{1,0}(\eta_3), \quad \forall\, (\eta_1, \eta_2, \eta_3) \in \Gamma_3.
\ee
When $\a=4$, we plug $\eta_2 = -(\eta_1+\eta_3)$ into the above formula to obtain
\be\label{G_cpt}
    G(\eta_1,\eta_2,\eta_3) = -3\eta_3 \Big((\eta_3 + 2\eta_1)^2-\frac{\b}{3}\Big).
\ee

Next, we estimate $|\F_{x}\phi_2(N, t)|$ which can be written as follows due to (\ref{phi2}):
\[
	|\mathscr{F}_{x}\phi_2(N, t)| = N \bigg| \int_{\mathbb{Z}}\int_0^t e^{it'G(\xi_1, N-\xi_{1}, -N)} \widehat\psi(\xi_1) \widehat\psi(N-\xi_{1})\,dt' \,d\xi_{1} \bigg|.
\]
Since the support of $\wh{\psi}$ is the set of four points: $\big\{\pm\frac{N+n}{2}, \pm\frac{N-n}{2}\big\}$, then $\xi_1$ has to be either $\frac{N+n}{2}$ or $\frac{N-n}{2}$ so that both $\xi_1$ and $N-\xi_1$ are in the support. Therefore, 
\[\begin{split}
	|\F_{x}\phi_2(N, t)| &= \frac{N}{2\pi} \bigg| \int_0^t \Big[ e^{it'G(\frac{N+n}{2}, \frac{N-n}{2}, -N)} + e^{it'G(\frac{N-n}{2}, \frac{N+n}{2}, -N)} \Big] \frac{1}{N^{2s}}\,dt' \bigg|.
\end{split}\]
Since $\b=3n^2$, then it follows from (\ref{G_cpt}) that
\begin{equation*}
G\Big(\frac{N+n}{2},\frac{N-n}{2}, -N\Big) = 0 = G\Big(\frac{N-n}{2},\frac{N+n}{2}, -N\Big),
\end{equation*}
which implies that 
\[
    |\F_{x}\phi_2(N, t)| = \frac{N}{2\pi} \frac{t}{N^{2s}} = \frac{t}{2\pi} N^{1-2s}.
\]
Hence, 
\begin{equation*}
\|\phi_2(\cdot,t)\|_{H^s(\m{T})}^2 =\int_\mathbb{Z} \langle \xi\rangle^{2s} |\mathscr{F}_x\phi_2(\xi,t)|^2 \dd\xi\geq \frac{1}{2\pi} N^{2s}|\F_{x} \phi_2(N,t)|^2 \geq \frac{t^2}{8\pi^3} N^{2-2s}.
\end{equation*}
Meanwhile, since $\|\phi_2(\cdot,t)\|_{H^s(\m{T})} \ls 1$ in (\ref{phi2bdd}), then we conclude $s\geq 1$ by sending $N\to\infty$.

\bigskip

\noindent{\bf Case 2}: $\alpha = 4$ and $\beta\neq 3n^2$ for any $n\in \mathbb{N}$.

In this case, we will prove $s \geq \frac12$ when the solution map is at least $C^3$. For any positive integer $N$, we define
\[
\phi(x) = 0,\quad \psi(x) = \frac{1}{\pi N^s}\cos(Nx),\quad \forall\, x\in \m{R},
\]
then
\begin{equation*}
	\widehat\phi = 0,\quad 
	\widehat\psi = \frac{1}{N^s}\delta(|\xi|-N), \quad \forall\, \xi\in\m{Z}.
\end{equation*}
which implies that $\|(\phi,\psi)\|_{H^s(\m{T}) \times H^s(\m{T})}\sim 1$. 
Meanwhile, since the solution map is assumed to be $C^3$, then it follows from (\ref{Mainidea}) with $k=3$ that 
\be\label{psi3bdd}
\sup_{0\le t \le T} \|\psi_3(\cdot, t) \|_{H^s(\m{T})}
\ls \Vert(\phi,\psi)\Vert_{{H}^s(\m{T})\times{H}^s(\m{T})}^3 \lesssim 1,
\ee
where $T$ is some fixed positive number. On the other hand, since $\phi = 0$, both $\phi_1$ and $\psi_2$ are identical zero functions based on (\ref{phi1psi1}) and (\ref{phi2psi2}). As a result, it follows from (\ref{phi3psi3}) that for any $t\in (0,T)$,
\[
\psi_3(x,t) = -3\int_0^t S_{\alpha,\beta}(t-t')\partial_x(\phi_2\psi_1)(x,t') \,dt'.
\]

Next, we compute $\F_{x} \psi_3(\xi,t)$. Based on the definition of the semigroup operator $S_{\a,\b}$ in (\ref{semigroup op}) and the above expression for $\psi_3(x,t)$, we find 
\[
\F_{x} \psi_3(\xi,t) = -3 \int_{0}^{t} e^{i P_{\a,\b}(\xi)(t-t')} \xi \F_{x}(\phi_2 \psi_1)(\xi,t') \dd t'.
\]
Noting that the polynomial $P_{\a,\b}$ is an odd function, so $\F_{x} \psi_3(\xi,t)$ can be rewritten as
\be\label{psi3F}
\F_{x} \psi_3(\xi,t) = -3\xi e^{i P_{\a,\b}(\xi)t} \int_{0}^{t} e^{i P_{\a,\b}(-\xi)t'} \F_{x}(\phi_2 \psi_1)(\xi,t') \dd t'.
\ee
Since $\psi_1(x,t) = S_{\a,\b}(t)\psi(x)$, then 
\be\label{Fconv}\begin{split}
	\F_{x}(\phi_2 \psi_1)(\xi,t') &= \int_{\m{Z}} \F_{x}\phi_2(\xi_1, t') \F_{x}\psi_1(\xi - \xi_1, t') \,d\xi_1 \\
	&= \int_{\m{Z}} \F_{x}\phi_2(\xi_1, t') e^{i P_{\a,\b}(\xi-\xi_1) t'} \wh{\psi}(\xi-\xi_1) \,d\xi_1.
\end{split}\ee
According to (\ref{phi2psi2}), 
\be\label{phi2F}\begin{split}
	\F_{x}\phi_2(\xi_1, t') &= -\int_{0}^{t'} e^{i P_{1,0}(\xi_1)(t'-\tau)} \xi_1 \F_{x}(\psi_1^2)(\xi_1, \tau) \,d\tau \\
	&= -\xi_1 e^{i P_{1,0}(\xi_1)t'}\int_{0}^{t'} e^{i P_{1,0}(-\xi_1)\tau} \F_{x}(\psi_1^2)(\xi_1,\tau) \,d\tau,
\end{split}\ee
where 
\be\label{psi1SqF}\begin{split}
	\F_{x}(\psi_1^2)(\xi_1, \tau) &= \int_{\m{Z}} \F_{x}\psi_1(\xi_2, \tau) \F_{x}\psi_1(\xi_1 - \xi_2, \tau) \,d\xi_2 \\
	&= \int_{\m{Z}} e^{i [P_{\a,\b}(\xi_2) + P_{\a,\b}(\xi_1-\xi_2)] \tau} \wh{\psi}(\xi_2)\wh{\psi}(\xi_1 - \xi_2) \,d\xi_2.
\end{split}\ee
Plugging (\ref{psi1SqF}) into (\ref{phi2F}) yields 
\be\label{phi2F2}
\F_{x} \phi_2(\xi_1, t') = -\xi_1 e^{i P_{1,0}(\xi_1)t'}\int_{0}^{t'} \int_{\m{Z}} e^{i G(\xi_2, \xi_1-\xi_2,-\xi_1) \tau} \wh{\psi}(\xi_2) \wh{\psi}(\xi_1-\xi_2) \,d\xi_2 \,d\tau,
\ee
where $G$ is the function defined in (\ref{G}). Now putting (\ref{phi2F2}) into (\ref{Fconv}) leads to 
\[\begin{split}
	&\F_{x}(\phi_2 \psi_1)(\xi,t') \\
	\quad= & -\int_{\m{Z}} \xi_1 e^{i [P_{\a,\b}(\xi-\xi_1) + P_{1,0}(\xi_1)] t'} \wh{\psi}(\xi-\xi_1)
	\bigg( \int_{0}^{t'} \int_{\m{Z}} e^{i G(\xi_2, \xi_1-\xi_2,-\xi_1) \tau} \wh{\psi}(\xi_2) \wh{\psi}(\xi_1-\xi_2) \,d\xi_2\,d\tau \bigg)\,d\xi_1.
\end{split}\]
Finally, combining the above formula with \eqref{psi3F} gives
\be\label{psi3}\begin{split}
	\F_{x} \psi_3(\xi,t) &= -3\xi e^{i P_{\a,\b}(\xi)t}
	\int_{0}^{t} \int_{\m{Z}} \xi_1 e^{i G(-\xi,\xi-\xi_1,\xi_1) t'} \wh{\psi}(\xi-\xi_1) \\
	&\qquad 
	\bigg( \int_{0}^{t'} \int_{\m{Z}} e^{i G(\xi_2, \xi_1-\xi_2,-\xi_1) \tau} \wh{\psi}(\xi_2) \wh{\psi}(\xi_1-\xi_2) \,d\xi_2\,d\tau \bigg)\,d\xi_1 \,dt'.
\end{split}\ee

Next, we fix $\xi = N$ in $\F_{x}\psi_3(\xi, t)$ and estimate $\F_{x}\psi_3(N, t)$. Then it follows from (\ref{psi3}) that 
\[
    |\F_{x} \psi_3(N, t)| 
    =3N 
    \bigg|\int_{\m{Z}}\int_{\m{Z}} \Psi_1(\xi_1, \xi_2) 
    \bigg( \int_{0}^{t} e^{i G(-N,N-\xi_1,\xi_1) t'}
    \int_{0}^{t'} e^{i G(\xi_2, \xi_1-\xi_2,-\xi_1) \tau} \,d\tau \,dt'\bigg)
    \dd\xi_2 \dd\xi_1\bigg|,
\]
where 
\begin{equation*}
    \Psi_1(\xi_1, \xi_2) := \xi_1 \wh{\psi}(N-\xi_1) \wh{\psi}(\xi_2) \wh{\psi}(\xi_1-\xi_2).
\end{equation*}
Define $\mathcal{D}$ to be the support of $\Psi_1$, that is
\[\mathcal{D} := \bigl\{ (\xi_1,\xi_2)\in\m{Z}^2 :\Psi_1(\xi_1,\xi_2) \neq 0 \bigr\}.\]
Then we only need to consider the points in $\mathcal{D}$ to compute $\F_{x} \psi_3(N, t)$, so we require $\xi_1\neq 0$ and 
\[
\xi_2, \, N - \xi_1,\, \xi_1-\xi_2 \in \text{supp}\ \widehat\psi = \{N,-N\}.
\]
Thus $\mathcal{D}$ contains exactly one point:
\[
\mathcal{D} = \{(2N,N)\}.
\]
Based on $\mathcal{D}$, we have
\begin{equation}\label{Temp219}
    \begin{aligned}
    |\mathscr{F}_x\psi_3(N,t)| = \frac{3N}{4\pi^2 N^{3s}} \bigg|\sum_{(\xi_1,\xi_2)=(2N,N)} \xi_{1} 
    \bigg(\int_{0}^{t} e^{i G(-N,N-\xi_1,\xi_1) t'}
    \int_{0}^{t'} e^{i G(\xi_2, \xi_1-\xi_2,-\xi_1) \tau} \,d\tau \,dt'\bigg) \bigg|.
    \end{aligned}
\end{equation}

Recall (\ref{G_cpt}) which shows that for any $(\eta_1, \eta_2, \eta_3)\in\Gamma_3$,
\[\begin{split}
G(\eta_1,\eta_2,\eta_3) &= -3\eta_3 \Big((\eta_3 + 2\eta_1)^2-\frac{\b}{3}\Big),
\end{split}\]
so
\[
    G(-N,-N,2N) = 2\b N \quad\text{and}\quad G(N,N,-2N) = -2\beta N.
\]
Therefore
\[
\begin{aligned}
	\text{RHS of }\eqref{Temp219} &\sim N^{2-3s}\bigg|\int_{0}^t e^{2\b Ni t'}\int_{0}^{t'}e^{-2\b Ni \tau} \dd \tau\dd t'\bigg|\\
	& = \frac{N^{1-3s}}{2\b} \bigg| \int_{0}^{t}\big( 1 - e^{2\b N i t'}\big) \,dt' \bigg|\geq \frac{N^{1-3s}}{2\b} \Big(t-\frac{1}{|\b|N}\Big).
\end{aligned}
\]
As a result, by fixing $t = T$ and requiring $N > \frac{2}{|\b|T}$,
\be\label{AbEs}
    |\mathscr{F}_x\psi_3(N,T)| \gs N^{1-3s} T.
\ee

Combining \eqref{AbEs} with \eqref{psi3bdd}, we deduce
\begin{equation*}
1\gtrsim \|\psi_3(\cdot, T)\|_{H^s(\m{T})} \gtrsim  \left(N^{2s}\big|\mathscr{F}_x\psi_3(N, T)\big|^2\right)^{\frac{1}{2}}\gtrsim N^{1-2s}T.
\end{equation*}
Sending $N\to\infty$, we conclude that \(s\ge\frac12\) is necessary.

\subsection{Proof of Theorem \ref{thm4}}
In this proof, $\a\in (0,4)\setminus\{1\}$ and $\b\neq 0$. We will justify this theorem in three cases. 

\noindent{\bf Case 1}: $R_\a\in\mathbb{Q}$.

In this case, we will prove $s\geq \frac12$ when the solution map is at least $C^3$. Since \(R_\alpha\in\mathbb{Q}\), we have \(c_1,c_2\in\mathbb{Q}\), where $c_1,c_2$ are given in \eqref{c1c2}: 
\[ c_1 = \frac12 + \frac{R_\a}{6}, \quad c_2 = \frac12 - \frac{R_\a}{6}, \quad R_\a = \sqrt{12/\a - 3}.\] 
Hence, we may choose integers $N$ such that \(c_1N, c_2N \in\mathbb{Z}\).

We define
\[
\phi(x) = 0,\quad
\psi(x) = \frac{1}{\pi N^s}\big[\cos( c_1 Nx) +\cos (c_2 Nx)\big] ,\quad \forall\, x \in \mathbb{R},
\]
then
\begin{equation}\label{Intial_Ill_5}
	\widehat\phi(\xi) = 0, \quad \widehat\psi(\xi) = \frac{1}{N^s}\big[ \delta (|\xi| - c_1 N) + \delta (|\xi| - c_2 N) \big], \quad \forall\, \xi\in\m{Z}.
\end{equation}
To simplify the notation, we define  
\[
N_1 := c_1N \quad \text{and} \quad N_2 := c_2N.
\]
The choice of $(\phi,\psi)$ implies $\|(\phi,\psi)\|_{H^s(\m{T}) \times H^s(\m{T})}\sim 1$.


Next, we fix $\xi = N_1$ and estimate $|\F_{x}\psi_3(N_1, t)|$. The value of $\xi$ is chosen to be $N_1$ so that \eqref{Temp4} is satisfied. Then it follows from (\ref{psi3}) that 
\be\label{psi3F_res}
|\F_{x} \psi_3(N_1, t)| 
=3N_1 
\bigg|\int_{\m{Z}}\int_{\m{Z}} \Psi_2(\xi_1, \xi_2) 
\bigg( \int_{0}^{t} e^{i G(-N_1,N_1-\xi_1,\xi_1) t'}
\int_{0}^{t'} e^{i G(\xi_2, \xi_1-\xi_2,-\xi_1) \tau} \,d\tau \,dt'\bigg)
\dd\xi_2 \dd\xi_1\bigg|,
\ee
where the function $G$ is as defined in (\ref{G}) and 
\be\label{Psi2}
	\Psi_2(\xi_1, \xi_2) := \xi_1 \wh{\psi}(N_1-\xi_1) \wh{\psi}(\xi_2) \wh{\psi}(\xi_1-\xi_2).
\ee
For any $(\eta_1,\eta_2,\eta_3)\in\Gamma_3$, we substitute $\xi_2$ with $-(\xi_1+\xi_3)$ to obtain 
\be\label{G_gen}\begin{split}
    G(\eta_1,\eta_2,\eta_3) &= P_{\alpha,\beta}(\eta_1)+P_{\alpha,\beta}(\eta_2)+P_{1,0}(\eta_3) \\
    &= -3\a\eta_3 \Big( \eta_1^2 + \eta_3 \eta_1 + \frac{\a-1}{3\a}\eta_3^2 \Big) + \b \eta_3\\
    &= -3\a\eta_3 (\eta_1 + c_1 \eta_3)(\eta_1 + c_2 \eta_3) + \b \eta_3.
\end{split}\ee
Denote $\mathcal{D}$ as the support of $\Psi_2$, that is, 
\be\label{suppD}
    \mathcal{D} :=  \bigl\{ (\xi_1,\xi_2)\in\m{Z}^2 :\Psi_2(\xi_1,\xi_2) \neq 0 \bigr\}.
\ee
Then we only need to consider the points in $\mathcal{D}$ to compute $\F_{x} \psi_3(N_1, t)$, so we require $\xi_1\neq 0$ and 
\[
\xi_2, \, N_1 - \xi_1,\, \xi_1-\xi_2 \in \text{supp}\ \widehat\psi.
\]
Since $c_1+c_2=1$, which implies that $N_1+N_2=N$, the qualified values of $\xi_1$ and $\xi_2$ can be summarized in the following table, where ``$\checkmark$'' indicates that
$(\xi_1,\xi_2)\in \mathcal{D}$.
\begin{table}[H]
	\centering
	\renewcommand{\arraystretch}{1.2} 
	\begin{tabular}{|c|c|c|c|c|}
		\hline
		\diagbox[height=2.6em, width=5.6em]{%
			\hspace{0.8em}$\xi_1$\hspace{1.8em}%
		}{%
			\hspace{1.8em}$\xi_2$\hspace{0.8em}%
		} 
		& $N_1$ & $N_2$ & $-N_1$ & $-N_2$ \\
		\hline
		$2N_1$    & \checkmark &   & &  \\
		\hline
		$N$       &  \checkmark&\checkmark &   & \\
		\hline
		$N_1-N_2$ & \checkmark &       &   & \checkmark\\
		\hline 
	\end{tabular}
	\caption{Values of $\xi_1,\xi_2$ for which $(\xi_1, \xi_2)\in\mathcal{D}$ } \label{tabsupportxi1xi2}
\end{table}

Based on $\mathcal{D}$, we have
\be\label{psi3F_sim}
\begin{aligned}
	|\mathscr{F}_x\psi_3(N_1,t)| = \frac{3N_1}{4\pi^2 N^{3s}} \bigg|\sum_{(\xi_1,\xi_2)\in \mathcal{D}} \xi_{1} 
	\bigg(\int_{0}^{t} e^{i G(-N_1,N_1-\xi_1,\xi_1) t'}
	\int_{0}^{t'} e^{i G(\xi_2, \xi_1-\xi_2,-\xi_1) \tau} \,d\tau \,dt'\bigg) \bigg|.
\end{aligned}
\ee
Meanwhile, it follows from (\ref{G_gen}) that 
\be\label{G_decomp1}
G(\xi_2,\xi_1-\xi_2,-\xi_1) = 3\a\xi_1 (\xi_2 - c_1 \xi_1)(\xi_2 - c_2 \xi_1) - \b\xi_1.
\ee
Since $G(\xi_2,\xi_1-\xi_2,-\xi_1) \neq 0$ for any $(\xi_1, \xi_2)\in\mathcal{D}$, then
\[
\int_{0}^{t'} e^{i G(\xi_2, \xi_1-\xi_2,-\xi_1) \tau} \,d\tau = \frac{-i}{G(\xi_2, \xi_1-\xi_2,-\xi_1)} \Big[ e^{i G(\xi_2, \xi_1-\xi_2,-\xi_1) t'} - 1 \Big],
\]
and 
\be\label{Temp6}
	|\mathscr{F}_x\psi_3(N_1,t)| = \frac{3N_1}{4\pi^2 N^{3s}} \Bigg|\sum_{(\xi_1,\xi_2)\in \mathcal{D}} \frac{\xi_{1}}{G(\xi_{2},\xi_{1}-\xi_{2},-\xi_{1})} \int_{0}^{t} \mathcal{G}(\xi_1,\xi_2,t') \dd t'\Bigg|,
\ee
where we set 
\be\label{G_script}
    \mathcal{G}(\xi_1,\xi_2,t') := e^{it'[G(-N_1,N_1-\xi_{1},\xi_{1})+G(\xi_{2},\xi_{1}-\xi_{2},-\xi_{1})]} - e^{it'G(-N_1,N_1-\xi_{1},\xi_{1})}.
\ee
Since $|\mathcal{G}(\xi_1,\xi_2,t')|$ is at most a constant, 
the term $\frac{\xi_{1}}{G(\xi_{2},\xi_{1}-\xi_{2},-\xi_{1})}$ plays a more important role. Let 
\be\label{F}
    F(\xi_1, \xi_2) = \frac{\xi_1}{G(\xi_{2},\xi_{1}-\xi_{2},-\xi_{1})}, \quad\forall\,(\xi_1,\xi_2)\in\mathcal{D}.
\ee
According to the decomposition (\ref{G_decomp1}), we have 
\[F(\xi_1, \xi_2) = \frac{1}{3\a  (\xi_2 - c_1\xi_1)(\xi_2 - c_2\xi_1) - \b}.\]
Thus, the distance between $\xi_2$ and both $c_1\xi_1$ and $c_2\xi_1$ determine the size of $F$.

According to Table \ref{tabsupportxi1xi2}, we divide $\mathcal{D}$ into two parts: $\mathcal{D} = \mathcal{D}_1 \cup \mathcal{D}_2$, where 
\[
\mathcal{D}_1 := \{(N,N_1),(N,N_2)\}, \qquad \mathcal{D}_2 := \mathcal{D} \setminus \mathcal{D}_1.
\]
When $(\xi_1,\xi_2)\in\mathcal{D}_1$, no matter $(\xi_1,\xi_2) = (N,N_1)$ or $(N,N_2)$, it always holds that 
\begin{equation*}
	F(\xi_1, \xi_2)= -\frac{1}{\b} \quad\text{and}\quad 
    \mathcal{G}(\xi_1,\xi_2,t') = 1 - e^{it'G(-N_1, -N_2,N)},
\end{equation*}
where we used the facts that $N_1+N_2=N$ and 
\begin{equation}\label{Temp4}
	G(-N_1,-N_2,N) = -G(N_1,N_2,-N) = -G(N_2,N_1,-N) = \b N.
\end{equation}
As a result, 
\[
I_1(t) := \sum_{(\xi_1,\xi_2)\in\mathcal{D}_1} F(\xi_1, \xi_2)\int_{0}^{t} \mathcal{G}(\xi_1,\xi_2,t') \,dt' \\
= -\frac{2}{\beta} \int_{0}^{t}\big(1 - e^{it'\b N}\big) \,dt'.
\]
In addition,
\[\begin{split}
	\bigg| \int_{0}^{t}\big(1 - e^{it'\b N}\big) \,dt' \bigg| &\geq t - \bigg|\int_{0}^{t} e^{it'\b N} \,dt'\bigg| \geq t - \frac{2}{|\b| N}.
\end{split}\]
Therefore, by fixing $t = T$ and requiring $N > \frac{4}{|\b|T}$,  we have 
\be\label{I1}
|I_1(T)| \geq \frac{T}{|\b|}.
\ee

On the other hand, for any $(\xi_1,\xi_2)\in \mathcal{D}_2 = \mathcal{D}\setminus \mathcal{D}_1$, we have the following:
\begin{equation}\label{Temp521}
	|F(\xi_1,\xi_2)| =\frac{1}{|3\a  (\xi_1-c_1\xi_2)(\xi_1-c_2\xi_2)-\b|}\leq \frac{C}{N^2},
\end{equation}
as long as $N$ is large enough, where $C$ is some constant which only depends on $\a$ and $\b$. Denote 
\[
    I_2(t) :=\sum_{(\xi_1,\xi_2)\in\mathcal{D}_2} F(\xi_1, \xi_2)\int_{0}^{t} \mathcal{G}(\xi_1,\xi_2,t') \,dt'.
\]
Since $\mathcal{D}_2$ only contains three points and $|\mathcal{G}(\xi_1,\xi_2,t')|\leq 2$, we conclude 
\be\label{I2}
    |I_2(T)| \leq \frac{CT}{N^2}.
\ee

Plugging (\ref{I1}) and (\ref{I2}) into (\ref{Temp6}) yields
\begin{equation*}
	\begin{split}
		|\mathscr{F}_x\psi_3(N_1,T)| = \frac{3N_1}{4\pi^2 N^{3s}} |I_1(T) + I_2(T)| &\sim N^{1-3s} |I_1(T) + I_2(T)| \\
		&\geq N^{1-3s} (|I_1(T)| - |I_2(T)|) \\
		&\geq N^{1-3s} \Big( \frac{1}{|\b|} - C N^{-2} \Big) T.
	\end{split}
\end{equation*}
Hence, by requiring $N$ is larger than $\sqrt{2C|\b|}$, we obtain 
\be\label{AbEs0}
|\mathscr{F}_x\psi_3(N_1,T)| \gs N^{1-3s} T.
\ee
Combining \eqref{AbEs0} with \eqref{Mainidea}, where $k=3$, we deduce
\begin{equation*}
	1\gtrsim \|\psi_3(\cdot, T)\|_{H^s(\m{T})} \gtrsim  \left(N_1^{2s}\big|\mathscr{F}_x\psi_3(N_{1}, T)\big|^2\right)^{\frac{1}{2}}\gtrsim N^{1-2s}T.
\end{equation*}
Sending $N\to\infty$ shows that \(s\ge\frac12\) is necessary.

\noindent{\bf Case 2}: $R_\a\notin \mathbb{Q}$ and $s_{\a,\b}< 1$.

In this case, we will prove $s \geq s^*(\alpha,\beta)$, where 
$s^*(\alpha,\beta) = \frac{1+s_{\a,\b}}{2}$ is defined in $(\ref{ci_res})$, when the solution map is at least $C^3$.
Under the assumption $s_{\alpha,\beta}< 1$, both $\nu_{\lambda}(c_1)<1$ and $\nu_{\lambda}(c_2)< 1$ hold due to (\ref{s_0}), where $c_1$ and $c_2$ are the numbers in (\ref{c1c2}), and 
$\lam = \frac{\b}{\a R_\a}$.
The main structure of the proof is analogous to that of Case 1, so we will omit details that have already been shown in Case 1. 

We define 
\be\label{Intial_Ill_6}
\phi = 0,\quad
\psi = \frac{1}{\pi N^s}\bigg[\cos\Big( [[c_1N+\frac{\lambda}{N}]]x\Big) +\cos \Big([[c_2N-\frac{\lambda}{N}]]x\Big)\bigg], \quad \forall x \in \mathbb{R},
\ee
where the notation $[[\cdot]]$ means 
\begin{equation}\label{nearint}
	[[y]] = \left\{
	\begin{aligned}
		&\text{the nearest even integer to $y$}, &\quad \text{if } \quad y\in \mathbb{Z}+1/2,\\
		&\text{the nearest integer to $y$}, &\quad \text{if } \quad y\notin \mathbb{Z}+1/2.
	\end{aligned}\right.
\end{equation}
For example, $[[2.5]] = 2$, $[[3.5]] = 4$, $[[2.1]] = 2$, $[[2.9]] = 3$. In the setting of (\ref{nearint}), one can easily justify the following identity for any integer $N$ and any real number $\lambda$.
\be\label{int}
[[c_1N+\frac{\lambda}{N}]]+[[c_2N-\frac{\lambda}{N}]] = N.
\ee 
To simplify notation and align with the earlier proof, we define
\be\label{N1N2}
N_1 := [[c_1N+\frac{\lambda}{N}]] \quad \text{and} \quad N_2 := [[c_2N-\frac{\lambda}{N}]].
\ee
then
\begin{equation*}
	\widehat{\phi}(\xi) = 0,\quad
	\widehat{\psi}(\xi)
	= \frac{1}{\pi N^s}\Big[\,
	\delta\bigl(|\xi| - N_1\bigr)
	+
	\delta\bigl(|\xi| - N_2\bigr)
	\,\Big],
	\quad \forall\,\xi\in\mathbb{Z}.
\end{equation*}

Fix $\xi = N_1$,  we estimate $|\F_{x}\psi_3(N_1, t)|$. 
Similar to Case 1, we compute $|\F_{x} \psi_3(N_1,t)|$ as (\ref{psi3F_res}), where $\Psi_2$ is defined in (\ref{Psi2}). We also adopt the notation $\mathcal{D}$ in (\ref{suppD}) for the support of $\Psi_2$, it turns out that Table \ref{tabsupportxi1xi2} keeps the same regarding the elements in $\mathcal{D}$, with the only difference being the new definitions of $N_1$ and $N_2$ in (\ref{N1N2}). The difficulties of (\ref{N1N2}) are twofold, one is the irrationality of $c_1 N$ and $c_2 N$, and the other one is the involvement of the term $\frac{\lam}{N}$. According to (\ref{Temp6}) in Case 1, 
\[
    |\mathscr{F}_x\psi_3(N_1,t)| = \frac{3N_1}{4\pi^2 N^{3s}} \Bigg|\sum_{(\xi_1,\xi_2)\in \mathcal{D}} F(\xi_1, \xi_2) \int_{0}^{t} \mathcal{G}(\xi_1,\xi_2,t') \dd t'\Bigg|,
\]
where (see (\ref{F}) and (\ref{G_script}))
\[
F(\xi_1,\xi_2) = \frac{\xi_1}{G(\xi_{2},\xi_{1}-\xi_{2},-\xi_{1})},
\]
and 
\[
    \mathcal{G}(\xi_1,\xi_2,t') = e^{it'[G(-N_1,N_1-\xi_{1},\xi_{1})+G(\xi_{2},\xi_{1}-\xi_{2},-\xi_{1})]} - e^{it'G(-N_1,N_1-\xi_{1},\xi_{1})}.
\]
We also split $\mathcal{D}$ as $\mathcal{D}_1 \cup \mathcal{D}_2$, where $\mathcal{D}_1 := \{(N,N_1),(N,N_2)\}$ and $\mathcal{D}_2 = \mathcal{D} \setminus \mathcal{D}_1$. Then 
\be\label{psi3_split}
    |\mathscr{F}_x\psi_3(N_1,t)| = \frac{3N_1}{4\pi^2 N^{3s}} |I_1(t) + I_2(t)|,
\ee
where 
\[
    I_1(t) = \sum_{(\xi_1,\xi_2)\in\mathcal{D}_1} F(\xi_1, \xi_2)\int_{0}^{t} \mathcal{G}(\xi_1,\xi_2,t') \,dt', \quad
    I_2(t) = \sum_{(\xi_1,\xi_2)\in\mathcal{D}_2} F(\xi_1, \xi_2)\int_{0}^{t} \mathcal{G}(\xi_1,\xi_2,t') \,dt'.
\]

We first estimate $I_1(t)$. For any $(\xi_1,\xi_2)\in \mathcal{D}_1$, the values of $F(\xi_1, \xi_2)$ and $\mathcal{G}(\xi_1,\xi_2,t')$ keep the same no matter $(\xi_1,\xi_2) = (N,N_1)$ or $(N,N_2)$, where we used the fact that $G$ is symmetric with respect to its first two variables, so
\be\label{I1sim}
    I_1(t) = 2 F(N, N_1) \int_{0}^{t} \mathcal{G}(N,N_1,t')\,dt' = \frac{2N}{G(N_1, N_2,-N)} \int_{0}^{t} \mathcal{G}(N,N_1,t') \,dt'.
\ee
Based on the definition (\ref{G_gen}) of $G$, 
\be\label{Temp536}
\begin{split}
G(N_1,N_2,-N) &= 3\a N (N_1 - c_1 N)(N_1 - c_2 N) - \b N \\
&= 3\alpha N(N_1 - x_1)(N_1 - x_2),
\end{split}\ee
where
\begin{equation*}
	x_1 = \frac{1}{2}N + \frac{1}{6}\sqrt{(R_{\alpha}N)^2+\frac{12\b}{\alpha}}, 
	\qquad 
	x_2 = \frac{1}{2}N - \frac{1}{6}\sqrt{(R_{\alpha}N)^2+\frac{12\b}{\alpha}}, 
	\qquad 
	R_{\alpha} = \sqrt{12/\alpha-3}.
\end{equation*}
We expand $x_1$ and $x_2$ in terms of the order of $\xi_1$ as follows:
\[
x_1 = c_1 N + \frac{\lam}{N} + Q_1(N), \qquad
x_2 = c_2 N - \frac{\lam}{N} + Q_2(N),
\qquad 
\lam = \frac{\b}{\a R_\a}, 
\]
with
\be\label{Temp539}
|Q_{j}(N)| \leq \frac{12\lam^2}{R_\a N^3} = O(N^{-3}), \qquad\, j=1,2.
\ee
Then it follows from \eqref{Temp536} that with \eqref{Temp539} yields
\begin{equation}\label{Gbdd}
	\begin{aligned}
		|G(N_1, N_2, -N)| &= 3\a N\Big|N_1-c_1 N - \frac{\lambda}{N}-Q_1(N)\Big|\Big|N_1-c_2N+\frac{\lambda}{N}-Q_2(N)\Big| \\
		&\sim N^2 \Big| N_1 - c_1 N - \frac{\lambda}{N} -Q_1(N)\Big| \\
        &= N^3 \Big| c_1-\frac{N_1}{N} + \frac{\lambda}{N^2} + \frac{Q_1(N)}{N}\Big|.
	\end{aligned}
\end{equation}
On the one hand, we combine (\ref{Temp539}) and (\ref{Gbdd}) together to obtain 
\be\label{Gbdd2}
	|G(N_1, N_2, -N)| \ls N^3\Big| c_1-\frac{N_1}{N} + \frac{\lambda}{N^2}\Big| + \frac{1}{N}.
\ee
For any $\varepsilon>0$, by Definition \ref{Def, bim} and the choice of $N_1$, there exist infinitely many $N$ such that
\begin{equation*}
	\Big|c_1-\frac{N_1}{N}+\frac{\lambda}{ N^2}\Big|\leq \frac{1}{N^{\mu_{\lambda}(c_1)-\varepsilon}}.
\end{equation*}
Since $\lam = \frac{\b}{\a R_\a}\neq 0$, it follows from 
Proposition \ref{muequnuplus2} that $\nu_{\lambda}(c_1) = \mu_{\lambda}(c_1) -2$, so 
\be\label{dist_est}
	\Big|c_1-\frac{N_1}{N}+\frac{\lambda}{ N^2}\Big|\leq \frac{1}{N^{\nu_{\lambda}(c_1)+2-\varepsilon}}.
\ee
Combining (\ref{Gbdd2}) with (\ref{dist_est}), we conclude that there exist infinitely many $N$ such that 
\be\label{G_ubd1}
	|G(N_1, N_2, -N)|\ls N^{1-\nu_{\lambda}(c_1)+\varepsilon} + \frac{1}{N},
\ee
Since $\nu_{\lam}(c_1) < 1$, then we know $|G(N_1, N_2, -N)| \ls N^{1-\nu_{\lambda}(c_1)+\varepsilon}$ and
\begin{equation}\label{Fbdd}
    |F(N,N_1)| = \frac{2N}{|G(N_1,N_2,-N)|}\gs N^{\nu_{\lam}(c_1)-\varepsilon}.
\end{equation}
On the other hand, 
\[
\begin{split}
\mathcal{G}(N,N_1,t') &= e^{it'[G(-N_1,N_1-N,N)+G(N_1,N-N_1,-N)]} - e^{it'G(-N_1,N_1-N,N)} \\
&= 1 - e^{-it'G(N_1,N_2,-N)},
\end{split}
\]
which implies
\be\label{int_small}
\begin{split}
\bigg| \int_0^t \mathcal{G}(N,N_1,t')  \,dt' \bigg|
\ge t - \frac{2}{|G(N_1,N_2,-N)|}.
\end{split}
\ee
Based on Definition \ref{def_nu}, for $\veps' := \frac{1-\nu_{\lam}(c_1)}{2}>0$, there exists $K>0$ such that 
\be\label{c1_lbdd}
    \bigg|c_1 - \frac{N_1}{N} + \frac{\lambda}{N^2}\bigg| \geq \frac{K}{N^{2+\nu_{\lam}(c_1)+\veps'}},
\ee
for all sufficiently large $N$. Then it follows from \eqref{Gbdd}, \eqref{Temp539} and (\ref{c1_lbdd}) that 
\be\label{G_lbdd}\begin{split}
    |G(N_1,N_2,-N)| &\gs N^3\bigg|c_1-\frac{N_1}{N}+\frac{\lambda}{N^2}\bigg| - N^2 |Q_1(N)| \\
    & \gtrsim N^{1-\nu_{\lambda}(c_1)-\varepsilon'} - \frac{1}{N} \sim N^{\frac{1-\nu_{\lam}(c_1)}{2}},
\end{split}\ee
for all sufficiently large $N$. Now we fix $t=T$ and then apply \eqref{int_small} and \eqref{G_lbdd} to obtain
\be\label{int_G_lbd}
\begin{split}
    \bigg| \int_0^t \mathcal{G}(N,N_1,t')  \,dt' \bigg| \ge T - C N^{-\frac{1-\nu_\lam(c_1)}{2}} \geq \frac{T}{2},
\end{split}
\ee
for sufficiently large $N$.
Substituting \eqref{Fbdd} and (\ref{int_G_lbd}) into (\ref{I1sim}) yields 
\be\label{I1alpha}
    |I_1(T)| \gs N^{\nu_{\lambda}(c_1)-\varepsilon}\, T.
\ee

Next, we estimate $I_2(T)$. For any $(\xi_1,\xi_2)\in \mathcal{D}_2$, we know
\begin{equation*}
	|F(\xi_1,\xi_2)| = \frac{1}{|3\a (\xi_2-c_1\xi_1)(\xi_2-c_2\xi_1)-\beta|}\sim \frac{1}{N^2}.
\end{equation*}
Consequently, 
\begin{equation}\label{I2alpha}
	|I_2(T)|  \lesssim \sum_{(\xi_1,\xi_2)\in \mathcal{D}_2} N^{-2}\int_0^{T} |\mathcal{G}(\xi_1,\xi_2,t')| \dd t' \lesssim N^{-2}T,
\end{equation}

Finally, based on \eqref{I1alpha} and \eqref{I2alpha}, it follows from (\ref{psi3_split}) that
\[
	\big| \mathscr{F}_x\psi_3(N_1, T) \big| \gtrsim N^{1-3s}(|I_1(T)| - |I_2(T)|) \gtrsim N^{1-3s+\nu_{\lambda}(c_1)-\varepsilon}T.
\]
Hence, we apply (\ref{Mainidea}) with $k=3$ to conclude that
\begin{equation*}
	1\gtrsim \|\psi_3(\cdot,T)\|_{H^s}\gtrsim  \left(N_1^{2s}\big|\mathscr{F}_x\psi_3(N_1,T) \big|^2\right)^{\frac{1}{2}} \gtrsim N^{-2s+1+\nu_{\lambda}(c_1)-\varepsilon} T.
\end{equation*}
As $N\to\infty$, it follows that $s\geq \frac{1}{2}+\frac{\nu_\lambda(c_1)}{2}-\frac{\varepsilon}{2}$. Since $\varepsilon>0$ is arbitrary, we know $s\geq \frac{1}{2}+\frac{\nu_\lambda(c_1)}{2}$. Similarly, we can also verify that
$s \geq \frac{1}{2} + \frac{\nu_\lambda(c_2)}{2} $. Ultimately, we obtain $s\geq \frac{1+s_{\a,\b}}{2}$.

\noindent{\bf Case 3}: $R_\a\notin \mathbb{Q}$ and $s_{\a,\b}\geq 1$.

In this case, we will prove that $s\geq 1$ when the solution map is at least $C^2$. Since $s_{\alpha,\beta} := \max\{\nu_{\lam}(c_1), \nu_{\lam}(c_2)\}\geq 1$, without loss of generality, we assume that \( \nu_{\lambda}(c_1) \geq 1 \) since the other case \( \nu_{\lambda}(c_2) \geq 1 \) can be handled similarly.
We take the same initial data as in Case 2, given by \eqref{Intial_Ill_6}, where $N_1,N_2$ are defined by \eqref{N1N2}. Recalling $\mathscr{F}_x\phi_2$ in \eqref{phi2}, we fix $\xi = N$ and estimate $|\mathscr{F}_x \phi_2(N,t) |$ which can be represented as follows:
\[
	|\mathscr{F}_{x}\phi_2(N, t)| = N \bigg| \int_{\mathbb{Z}}\int_0^t e^{it'G(\xi_1, N-\xi_{1}, -N)} \widehat\psi(\xi_1) \widehat\psi(N-\xi_{1})\,dt' \,d\xi_{1} \bigg|.
\] 
Since both $\xi_1$ and $N-\xi_1$ must be in the support of $\wh{\psi}$, then $\xi_1$ has to be either $N_1$ or $N_2$. By adding these two cases, we have 
\begin{equation}\label{phi2Bdd}
	|\F_{x}\phi_2(N, t)| = \frac{N}{2\pi} \bigg| \int_0^t 2e^{it'G(N_1, N_2, -N)}   \frac{1}{N^{2s}}\,dt' \bigg| = \frac{2N^{1-2s}}{\pi}\frac{\Big|\sin\Bigl(\frac{G(N_1,N_2,-N)t}{2}\Bigr)\Big|}{\big|G(N_1,N_2,-N)\big|},
\end{equation}
where in the first equality, we used the fact that $G(\eta_1,\eta_2,\eta_3)$ is symmetric with respect to its first two variables.

For any $\varepsilon>0$, it follows from \eqref{G_ubd1} that
\[
    |G(N_1, N_2, -N)|\ls N^{1-\nu_{\lambda}(c_1)+\varepsilon} + \frac{1}{N} \ls N^{\veps},
\]
where the last inequality is due to $\nu_{\lambda}(c_1) \geq 1$. By choosing $t = t_{N} := N^{-2\veps} T$, then 
\begin{equation*}
\frac{G(N_1,N_2,-N) t_N}{2} \lesssim N^{-\varepsilon}\,T \to 0,
\qquad\text{as }N\to +\infty.
\end{equation*}
Noting that $\frac{\sin x}{x}\geq\frac{1}{2}$ for small $|x|$, hence, for sufficiently large $N$,
\begin{equation}\label{sim}
\frac{\Big|\sin\Bigl(\frac{G(N_1,N_2,-N) t_N}{2}\Bigr)\Big|}{\big|G(N_1,N_2,-N)\big|}\geq \frac{1}{2}\cdot\frac{t_N}{2} = \frac{1}{4}N^{-2\veps}T.
\end{equation}
Therefore, by taking $N$ sufficiently large so that \eqref{sim} is valid, it then follows from (\ref{phi2Bdd}) that 
\be\label{AbEs2}
|\mathscr{F}_x\phi_2(N,t_N)| \gtrsim N^{1-2s-2\veps} T.
\ee
Finally, since $t_N\in [0,T]$, we apply (\ref{Mainidea}) with $k=2$ to obtain
\begin{equation*}
1\gtrsim \|\phi_2(\cdot, t_N)\|_{H^s(\mathbb{T})}
\gtrsim
\bigl(N^{2s}\big|\mathscr{F}_x\phi_2(N, t_N)\big|^2\bigr)^{\frac12}
\gtrsim N^{1-s-\veps}T.
\end{equation*}
Letting $N\to\infty$, we conclude that $s\geq 1-\veps$,
which implies $s\geq 1$ since $\veps$ is arbitrary.

\section*{Acknowledgments}
X.Yang is supported by National Natural Science Foundation of China (No. 12401299), Natural Science Foundation of Jiangsu Province (No. BK20241260), Scientific Research Center of Applied Mathematics of Jiangsu Province (No. BK20233002).


\bigskip

\thanks{(K. Wang) School of Mathematics, Southeast University, Nanjing, Jiangsu 211189, China} 

\thanks{Email: ke.wang.math@seu.edu.cn}

\medskip

\thanks{(X. Yang) School of Mathematics, Southeast University, Nanjing, Jiangsu 211189, China} 

\thanks{Email: xinyang@seu.edu.cn}

\end{document}